\newcommand{\standout}[1]{\textbf{#1}}   
\newcommand{\ds}{\displaystyle}
\newcommand{\CC}{\mathbb{C}}
\newcommand{\actby}[1]{{}^{#1}}              
\newcommand{\actbylessspace}[1]{{}^{#1}\!}
\newcommand{\vol}[1]{\text{vol}_{#1}^{\perp}}
\newcommand{\mol}{\bullet}
\newcommand{\HHSSG}[1]{\HH^{#1}(S(V),S(V)\#G)}
\newcommand{\kappaLxy}[5]{\kappa^{*}_{#1}[#3+{}^{#2}#3,\kappa^L_{#2}(#4,#5)]}
\newcommand{\phibulletxy}[5]{\kappaLxy{#1}{#2}{#3}{#4}{#5}+\kappaLxy{#1}{#2}{#4}{#5}{#3}+\kappaLxy{#1}{#2}{#5}{#3}{#4}}
\newcommand{\kappaLLxy}[5]{\kappa^L_{#1}[#3+{}^{#2}#3,\kappa^L_{#2}(#4,#5)]}
\newcommand{\phiLxy}[5]{\kappaLLxy{#1}{#2}{#3}{#4}{#5}+\kappaLLxy{#1}{#2}{#4}{#5}{#3}+\kappaLLxy{#1}{#2}{#5}{#3}{#4}}
\DeclareMathOperator{\HH}{HH}
\DeclareMathOperator{\Ext}{Ext}
\DeclareMathOperator{\Hom}{Hom}
\DeclareMathOperator{\Span}{Span}
\DeclareMathOperator{\Sym}{Sym}
\DeclareMathOperator{\Alt}{Alt}
\DeclareMathOperator{\im}{im}
\DeclareMathOperator{\codim}{codim}
\DeclareMathOperator{\gr}{gr}
\DeclareMathOperator{\tri}{3-cyc}
\DeclareMathOperator{\fivecyc}{5-cyc}
\DeclareMathOperator{\penta}{5-cyc}
\theoremstyle{plain}
\newtheorem{theorem}{Theorem}[section]
\newtheorem*{theorem*}{Theorem}
\newtheorem{proposition}[theorem]{Proposition}
\newtheorem{lemma}[theorem]{Lemma} 
\newtheorem{corollary}[theorem]{Corollary}
\theoremstyle{definition}
\newtheorem{example}[theorem]{Example}
\newtheorem{remark}[theorem]{Remark}
\newtheorem{definition}[theorem]{Definition}
\newtheorem{case}{Case}  
\numberwithin{equation}{section}
\begin{document}

\begin{abstract}
Drinfeld orbifold algebras are a type of
deformation of skew group algebras generalizing graded Hecke algebras of interest in
representation theory, algebraic combinatorics, and noncommutative geometry. 
In this article, we classify all Drinfeld orbifold algebras for symmetric groups acting by the natural permutation representation.  This provides, for nonabelian groups, infinite families of examples of Drinfeld orbifold algebras that are not graded Hecke algebras.  We include explicit descriptions of the maps recording commutator relations and show there is a one-parameter family of such maps supported only on the identity and a three-parameter family of maps supported only on $3$-cycles and $5$-cycles.
Each commutator map must satisfy properties arising from a Poincar\'{e}-Birkhoff-Witt condition on the algebra, and our analysis of the properties illustrates reduction techniques using orbits of group element factorizations and intersections of fixed point spaces.
\end{abstract}

\title[Orbifold algebras for $S_n$]
{Drinfeld orbifold algebras for symmetric groups}

\author{B.\ Foster-Greenwood}
\address{Department of Mathematics and Statistics, California State Polytechnic University,
Pomona, California 91768, USA}\email{brianaf@cpp.edu}
\author{C.\ Kriloff}
\address{Department of Mathematics and Statistics, Idaho State University,
Pocatello, Idaho 83209, USA}\email{krilcath@isu.edu}
\subjclass[2010]{16S80 (Primary) 16E40, 16S35, 20B30 (Secondary)}
\keywords{skew group algebra, deformations, Drinfeld orbifold algebra, Hochschild cohomology, Poincar\'{e}-Birkhoff-Witt conditions, symmetric group}


\maketitle

\section{Introduction}
Numerous algebras of intense recent study and interest arise as deformations of skew group algebras $S(V)\# G$, where $G$ is a finite group acting linearly on a finite-dimensional vector space $V$ and $S(V)$ is the symmetric algebra.
A grading on the skew group algebra is determined by assigning
degree one to vectors in $V$ and degree zero to elements of the group algebra. Drinfeld graded Hecke algebras are constructed by identifying commutators of
elements of $V$ with carefully chosen elements of degree zero (i.e., from the group algebra) to
yield a deformation of the skew group algebra.  In~\cite{SWorbifold}, Drinfeld orbifold algebras are similarly defined but additionally allow for degree-one terms in the commutator relations.  The resulting algebras are also
deformations of the skew group algebra.

Besides capturing a new realm of deformations of skew group algebras, Drinfeld orbifold algebras encompass many known algebras of interest in representation theory, noncommutative geometry, and mathematical physics.  The
term ``Drinfeld orbifold algebras'' alludes to the subject's origins in~\cite{Drinfeld1986}, where Drinfeld introduced a broad class of algebras to serve as noncommutative coordinate rings for singular orbifolds.
When the group is a Coxeter group acting by its reflection representation, Drinfeld's algebras are isomorphic (see~\cite{RamShepler})
to the graded Hecke algebras from~\cite{Lusztig1988}, which arise from a filtration of an affine Hecke algebra when the group is crystallographic (see~\cite{Lusztig1989}).  The representation theory of these algebras is useful in understanding representations and geometric structure of reductive $p$-adic groups.

A recent focus on symplectic reflection algebras, which are Drinfeld Hecke algebras for symplectic reflection groups acting on a symplectic vector space, began with~\cite{EtingofGinzburg2002}.
The importance of these algebras lies in the fact that the center of the skew group algebra is the ring of invariants, $\CC[V]^G=\mathrm{Spec}(V/G)$, and in the philosophy that the center of a deformation of the skew group algebra may then deform $\CC[V]^G$ (see the surveys~\cite{Gordon2008,Bellamy2016}).
As a special case, rational Cherednik algebras arise by pairing a reflection representation with its dual and are related to
integrable Calogero-Moser systems in physics and deep results in combinatorics (see for instance the surveys~\cite{Gordon2010,Etingof2014}).

Drinfeld orbifold algebras afford two advantageous views: as quotient algebras satisfying a Poincar\'{e}-Birkhoff-Witt (PBW) condition and as formal algebraic deformations of skew group algebras.  While
PBW conditions relate an algebra to homogeneous shadows of itself that have well-behaved bases, algebraic deformation theory (\`{a} la Gerstenhaber~\cite{GerstenhaberSchack}) focuses on how the multiplicative structure varies with a deformation parameter and provides a framework of understanding via Hochschild cohomology. In particular, every formal deformation arises from a Hochschild 2-cocycle.

Fruitful techniques arise from a melding of
the PBW perspective with the deformation theory perspective (see the survey~\cite{SWPBWsurvey2015}).
Braverman and Gaitsgory~\cite{BravermanGaitsgory} and also Polishchuk and Positelski~\cite{PolishchukPositselski} initiated the use of homological methods to study PBW conditions in the context of quadratic algebras of Koszul type.  Etingof and Ginzburg applied some of these ideas in an expanded setting in their seminal paper on symplectic reflection algebras.
The study of Drinfeld orbifold algebras also benefits from relating PBW conditions to
formal deformations. 
Shepler and Witherspoon prove two characterizations of Drinfeld orbifold
algebras: a concrete ring theoretic version~\cite[Theorem~3.1]{SWorbifold} (proved using Composition-Diamond Lemmas and Groebner basis theory)  and a cohomological version~\cite[Theorem~7.2]{SWorbifold}.

In the present case study, we classify Drinfeld orbifold algebras for symmetric groups acting by the natural permutation representation.  In Section~\ref{sec:cohomology}, we apply~\cite[Theorem~7.2]{SWorbifold} and use Hochschild cohomology to find possible degree-one terms of the commutator relations for a Drinfeld orbifold algebra.  In Section~\ref{sec:computations}, we then work with~\cite[Theorem~3.1]{SWorbifold} to determine compatible degree-zero terms (if they exist).
Our main result, stated in Theorems~\ref{thm:LieOrbifoldAlgebraMaps} and~\ref{thm:DrinfeldOrbifoldAlgebraMaps}, is an explicit description of the parameter maps that define Drinfeld orbifold algebras for symmetric groups. 

Parameter maps of Drinfeld orbifold algebras record commutators of elements of the vector space $V$ and can be categorized
based on their support, i.e., which group elements appear in the image.
Drinfeld orbifold algebra maps (see Definition~\ref{def:fourconditions}) with their linear part supported only on the identity give rise to Lie orbifold algebras, as defined in~\cite{SWorbifold}.  Lie orbifold algebras generalize universal enveloping algebras of Lie algebras, just as symplectic reflection algebras generalize Weyl algebras.
We summarize our results classifying Lie and Drinfeld orbifold algebras.  
\begin{theorem*} 
	For the symmetric group $S_n$ ($n\geq3$) acting on $V\cong\CC^n$ by the natural
	permutation representation, there is a one-parameter family of Lie orbifold algebras. 
\end{theorem*}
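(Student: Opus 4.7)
The strategy combines the $S_n$-equivariance forced by the Hochschild cohomology characterization (Theorem~7.2 of~\cite{SWorbifold}) with a direct verification of the Jacobi condition from the PBW formulation (Theorem~3.1 of~\cite{SWorbifold}). Since a Lie orbifold algebra supported only on the identity has both $\kappa^L$ and $\kappa^*$ concentrated there, the problem reduces to classifying the $S_n$-equivariant maps
\[
\kappa^L \colon \wedge^2 V \to V \qquad \text{and} \qquad \kappa^* \colon \wedge^2 V \to \CC,
\]
where $V = \CC^n$ carries the natural permutation representation.

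For $\kappa^L$, I would write $\kappa^L(v_1 \wedge v_2) = \sum_{k=1}^n a_k v_k$ and invoke equivariance under the transposition $(1\,2)$: combined with the skew-symmetry of $\wedge^2$, this forces $a_2 = -a_1$ and $a_k = 0$ for $k \geq 3$, while invariance under the pointwise stabilizer $S_{\{3,\ldots,n\}}$ of the pair $\{1,2\}$ is automatic. Extending by $S_n$-equivariance from the single orbit of unordered pairs then yields $\kappa^L(v_i \wedge v_j) = c(v_i - v_j)$ for a single parameter $c \in \CC$. A quick character computation, or decomposing $V = \CC \oplus V_0$ and applying Schur's lemma, confirms $\dim_\CC \Hom_{S_n}(\wedge^2 V, V) = 1$.

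For $\kappa^*$, the space $\Hom_{S_n}(\wedge^2 V, \CC)$ detects the trivial isotypic component of $\wedge^2 V$. Using $\wedge^2 V \cong \wedge^2 V_0 \oplus V_0$, and noting that $V_0$ is self-dual and irreducible of dimension at least $2$ for $n \geq 3$, the unique trivial summand of $V_0 \otimes V_0$ lies inside $\Sym^2 V_0$, so $\wedge^2 V_0$ contains no trivial summand. Hence $\kappa^*$ must vanish identically.

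It then remains to verify the PBW conditions on the candidate $\kappa^L$. Since $\kappa$ is concentrated on the identity with image in $V$, the four conditions of Definition~\ref{def:fourconditions} collapse to the ordinary Jacobi identity, and one computes directly
\[
\sum_{\mathrm{cyclic}} [[v_i,v_j],v_k] = c^2\bigl((v_i - v_j) + (v_j - v_k) + (v_k - v_i)\bigr) = 0,
\]
so every scalar $c$ produces a valid Lie orbifold algebra. The main obstacle in this plan is correctly translating the four PBW conditions into equivariance plus Jacobi data in the identity-supported case, and verifying that no additional component of $\kappa^*$ supported on non-identity elements is compatible with the prescribed $\kappa^L$; once those reductions are in place, the representation-theoretic classification and the Jacobi verification are mechanical.
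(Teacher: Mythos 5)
Your existence argument and your classification of the identity-supported invariant cochains are both sound, and the latter is a genuinely more elementary route than the paper's: Lemma~\ref{Lieorbifoldparametermaps} derives the one-dimensionality of the identity component of $(H^{2,1}\oplus H^{2,0})^{S_n}$ from Solomon's theorem on invariant differential forms, whereas you obtain the same conclusion from $\wedge^2 V\cong\wedge^2 V_0\oplus V_0$ and Schur's lemma (together with the observation that $\wedge^2 V_0$ has no trivial summand because the invariant form on the self-dual irreducible $V_0$ is symmetric). The reduction of the PBW conditions to the ordinary Jacobi identity when $\kappa=\kappa^L$ is supported on the identity with $\kappa^C=0$, and the verification that $\kappa^L(e_i,e_j)=c(e_i-e_j)$ satisfies it, are also correct.

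The gap is in the uniqueness half, and it is precisely the point you defer in your last sentence. A Lie orbifold algebra map is a Drinfeld orbifold algebra map whose \emph{linear} part is supported only on the identity (Definition~\ref{def:fourconditions}); the constant part $\kappa^C$ is not so constrained, so your opening premise that ``both $\kappa^L$ and $\kappa^*$ are concentrated there'' is unjustified, and your computation $\Hom_{S_n}(\wedge^2 V,\CC)=0$ only eliminates the identity component of $\kappa^C$. The $S_n$-invariant constant $2$-cochains satisfying the mixed Jacobi identity are supported on $3$-cycles, namely $\kappa^C_{\tri}$ with $\kappa^C_{(ijk)}(e_i,e_j)=\kappa^C_{(ijk)}(e_j,e_k)=\kappa^C_{(ijk)}(e_k,e_i)=c$ (Corollary~\ref{constantcocycles}), and every such cochain clears the first obstruction because $\psi(\kappa^C_{\tri})=0=\phi(\kappa^L,\kappa^L)$. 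If some such $\kappa^C$ with $c\neq0$ also cleared the second obstruction, the classification would be a two-parameter family, so this possibility must be excluded by an explicit computation: with $\kappa^L_1(e_i,e_j)=a(e_i-e_j)$, the $(123)$-component of $\phi(\kappa^C,\kappa^L)$ evaluated on $e_1,e_2,e_3$ is $2a\bigl[\kappa^C_{(123)}(e_1,e_2-e_3)+\kappa^C_{(123)}(e_2,e_3-e_1)+\kappa^C_{(123)}(e_3,e_1-e_2)\bigr]=12ac$, which is nonzero whenever $a$ and $c$ are both nonzero. This is the final step of the paper's proof of Theorem~\ref{thm:LieOrbifoldAlgebraMaps}; without it you have exhibited a one-parameter family but not shown that it exhausts the Lie orbifold algebras.
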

The remaining algebras have commutator relations supported only on $3$-cycles and $5$-cycles.  
\begin{theorem*}
	For the symmetric group $S_n$ ($n\geq 4$) acting on $V\cong\CC^n$ by the natural permutation representation, there is a three-parameter family of Drinfeld orbifold algebras supported on $3$-cycles and $5$-cycles.  For $n=3$, the family involves only two parameters. 
\end{theorem*}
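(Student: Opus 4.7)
My approach is to combine the cohomological characterization from \cite[Theorem~7.2]{SWorbifold} with the ring-theoretic PBW conditions from \cite[Theorem~3.1]{SWorbifold}, following the strategy outlined in the introduction. First, I would use Hochschild cohomology to classify the allowable linear parts $\kappa^L$ of a parameter map. Since the Lie orbifold case (support on the identity) is handled by the preceding theorem, the support of any remaining nontrivial $\kappa^L$ must lie on group elements whose fixed-point spaces have small codimension in $V$; for $S_n$ acting by the natural permutation representation, a detailed analysis using $G$-equivariance together with the $2$-cocycle condition will show that $3$-cycles and $5$-cycles are the only elements admitting nonzero contributions.

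Second, for each allowable $g$, I would determine the explicit form of $\kappa^L_g \colon V\wedge V \to V$ by invoking equivariance under the centralizer $C_{S_n}(g)$. For $g$ a $3$-cycle, this reduces the freedom to a single scalar; likewise for a $5$-cycle. Together with one further scalar parameter from the degree-zero part $\kappa^*$, this yields a three-dimensional space of candidate maps when $n \geq 4$. When $n = 3$, no $5$-cycles exist, so one parameter drops out and only two remain, matching the stated theorem.

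Third, I would apply \cite[Theorem~3.1]{SWorbifold} to verify the PBW compatibility conditions for each candidate pair $(\kappa^L, \kappa^*)$. The two substantive identities take the form
\[
\phibulletxy{g}{h}{v_1}{v_2}{v_3}=0
\quad\text{and}\quad
\phiLxy{g}{h}{v_1}{v_2}{v_3}=0,
\]
which must be checked for all triples $(v_1, v_2, v_3)$ of basis vectors in $V$ and all factorizations $g = h \cdot k$. Showing that every such identity is automatically satisfied on the three-parameter family then establishes the theorem.

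The hard part will be the systematic verification of these PBW conditions, because the support of an individual summand can shift under multiplication away from the allowed set of $3$-cycles and $5$-cycles, and each stray contribution would impose an additional linear constraint that could shrink the parameter space. The reduction techniques alluded to in the abstract are the key: organizing factorizations $g = h\cdot k$ into orbits under the action of $C_{S_n}(g)$ so that terms within a common orbit contribute identically up to sign, and using intersections $V^g \cap V^h$ of fixed-point subspaces to control how $\kappa^L_g$ and $\kappa^L_h$ interact. Correctly accounting for every orbit and confirming that each resulting constraint reduces to a tautology on the three-parameter family is the central bookkeeping challenge of the proof.
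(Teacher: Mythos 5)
There is a genuine gap, and it lies in where you locate the three parameters. In the paper's classification the linear part $\kappa^L$ of a Drinfeld orbifold algebra map can only be supported on elements $g$ with $\codim V^g\in\{0,2\}$ (identity, $3$-cycles, double-transpositions), because the relevant Hochschild cohomology component involves $\bigwedge^{2-\codim V^g}(V^g)^*$; a $5$-cycle has $\codim V^g=4$ and therefore supports \emph{no} degree-zero or degree-one $2$-cocycle at all. So your step two, which assigns one free scalar to $3$-cycles and one to $5$-cycles, cannot be carried out: the $5$-cycle components of the final map are not cocycle data and carry no freedom. Instead, the centralizer-invariant subspace of $V^{(123)}$ is two-dimensional for $n\geq 4$ (spanned by $e_1+e_2+e_3$ and $e_4+\cdots+e_n$), giving \emph{two} linear parameters $a,b$ on $3$-cycles, plus one constant parameter $c$ on $3$-cycles. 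Your explanation of the drop to two parameters at $n=3$ (``no $5$-cycles exist'') is therefore wrong and would falsely predict only two parameters at $n=4$ as well, contradicting the statement; the real reason is that the second invariant vector $e_4+\cdots+e_n$ vanishes when $n=3$.

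Relatedly, your third step records both substantive PBW identities as vanishing conditions, but the first obstruction is $\phi(\kappa^L,\kappa^L)=2\psi(\kappa^C)$, not $\phi(\kappa^L,\kappa^L)=0$. The bracket $\phi(\kappa^L_{\tri},\kappa^L_{\tri})$ is genuinely nonzero on $5$-cycles, and this is precisely what forces one to \emph{introduce} a constant $2$-cochain $\kappa^C_{\penta}$ supported on $5$-cycles, with values pinned to $(a-b)^2$ times fixed integers, so that $2\psi(\kappa^C_{\penta})$ absorbs it. Nothing about this is ``automatically satisfied''; the $5$-cycle constants must be constructed to clear the obstruction and then shown to clear the second obstruction $\phi(\kappa^C_{\penta},\kappa^L_{\tri})=0$ as well. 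Your orbit-reduction strategy for organizing factorizations $g=xy$ and using $V^x\cap V^y$ is the right idea for that verification, but without the corrected parameter count and the corrected form of the first obstruction the argument does not produce the stated family.
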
 

In Section~\ref{sec:OrbifoldAlgebras}, we present the algebras via generators and relations.  The examples contribute to an expanding medley of ``degree-one deformations''.  For instance, Shakalli~\cite{Shakalli} uses actions of Hopf algebras to construct examples of deformations of quantum skew group algebras involving degree-one terms in the commutator relations.  Shepler and Witherspoon~\cite{SWorbifold} consider Drinfeld orbifold algebras for groups acting diagonally.  The algebras we construct are among the first examples of Drinfeld orbifold (but not Hecke) algebras for nonabelian groups.

A fundamental problem in deformation theory is to determine which Hochschild
$2$-cocycles actually lift to deformations.  The results of Section~\ref{sec:mainresult} provide a family of $2$-cocycles that lift to define Drinfeld orbifold algebras for symmetric groups.
However, we also show, in Proposition~\ref{prop:L1L3incompatible},
that for symmetric groups, degree-one Hochschild $2$-cocycles simultaneously supported on and off the identity do not lift to yield Drinfeld orbifold algebras (and in fact do not even define Poisson structures).  This contrasts with the Drinfeld Hecke algebra case in which every polynomial degree-zero Hochschild $2$-cocycle determines a deformation of the skew group algebra.

Reduction techniques in Section~\ref{sec:reduction} and simplifications in Section~\ref{sec:computations} may prove helpful in predicting for which group actions and spaces candidate cocycles will lift to yield Drinfeld orbifold algebras.
In particular, a variation of Lemma~\ref{le:orbitreduction} may be effective for other groups with centralizers acting by monomial matrices, and the pattern to the values in Lemmas~\ref{lemma:simplification} and~\ref{le:simplification53} might generalize
to other group representations through analysis of intersections of fixed point spaces.
As further exploration, one could consider Drinfeld
orbifold algebras for symmetric groups in the twisted or quantum settings, as has been done for Drinfeld Hecke algebras for symmetric groups in~\cite[Example~2.17]{Witherspoon2007} and~\cite[Theorem~6.9]{Naidu2016}.

\section{Preliminaries}\label{sec:preliminaries}

Throughout, we let $G$ be a finite group acting linearly on a vector space $V\cong\CC^n$. All tensors will be over $\CC$. 

\subsection*{Skew group algebras}
Let $G$ be a finite group that acts on a $\CC$-algebra $R$ by
algebra automorphisms, and write $\actby{g}s$ for the result of acting by $g\in G$ on $s\in R$.  The \standout{skew group algebra} $R\#G$ is the
semi-direct product algebra $R\rtimes \CC G$ with underlying vector space $R\otimes \CC G$ and multiplication of simple tensors defined by
$$
(r\otimes g)(s\otimes h)=r(\actby{g}s)\otimes gh
$$
for all $r,s\in R$ and $g,h\in G$.
The skew group algebra becomes a $G$-module by letting $G$ act diagonally on
$R\otimes \CC G$, with conjugation on the group algebra factor: 
$$
\actby{g}(s\otimes h)=(\actby{g}s)\otimes(\actby{g}h)=(\actby{g}s)\otimes ghg^{-1}.
$$
In working with elements of skew group algebras, we commonly omit tensor symbols unless
the tensor factors are lengthy expressions. 

If $G$ acts linearly on
a vector space $V\cong\CC^n$, then $G$ also acts on the tensor algebra $T(V)$ and
symmetric algebra $S(V)$ by algebra automorphisms.
The skew group algebras $T(V)\# G$ and $S(V)\# G$ become graded
algebras when elements of $V$ are assigned degree one and elements of $G$ are assigned degree zero.

\subsection*{Cochains} 
A \standout{$k$-cochain} is a $G$-graded linear map $\alpha=\sum_{g\in G}\alpha_g g$ with components $\alpha_g:\bigwedge^k V\to S(V)$.
 (Details in Section~\ref{sec:cohomology} motivate the use of cohomological terminology.)
 If each $\alpha_g$ maps into $V$, then $\alpha$ is called a 
 \standout{linear cochain}, and if each $\alpha_g$ maps into $\CC$, then $\alpha$ is called
 a \standout{constant cochain}.  

We regard a map $\alpha$ on $\bigwedge^kV$ as a multilinear alternating map on $V^k$ and write $\alpha(v_1,\ldots,v_k)$ in place of $\alpha(v_1\wedge\cdots\wedge v_k)$. 
     Of course, if 
$\alpha(v_1,\ldots,v_k)=0$, then $\alpha$ is zero on any permutation of
$v_1,\ldots,v_k$. Also, if $\alpha$ is zero on all $k$-tuples of basis vectors, then $\alpha$ is zero on any $k$-tuple of vectors. We exploit these facts often in the computations in Section~\ref{sec:computations}.

The \standout{support of a cochain $\alpha$} is the set of group elements for which the component $\alpha_g$ is not the zero map. The \standout{kernel of a cochain $\alpha$} is the set of vectors $v_0$ such that $\alpha(v_0,v_1,\ldots,v_{k-1})=0$ for all $v_1,\ldots,v_{k-1}\in V$.

The group $G$ acts on the components of a cochain.  Specifically, for a group element $h$ and component $\alpha_g$, the map
$\actby{h}\alpha_g$ is defined by $(\actby{h}\alpha_g)(v_1,\ldots,v_k)=\actby{h}(\alpha_g(\actby{h^{-1}}v_1,\ldots,\actby{h^{-1}}v_k))$. In turn, the group acts on the space of cochains by letting $\actby{h}\alpha=\sum_{g\in G}\actby{h}\alpha_g\otimes hgh^{-1}$. Thus $\alpha$ is a \standout{$G$-invariant cochain} if and only if $\actby{h}\alpha_g=\alpha_{hgh^{-1}}$ for all $g,h\in G$.

\subsection*{Drinfeld orbifold algebras}
For a parameter map $\kappa=\kappa^L+\kappa^C$, where $\kappa^L$ is a linear $2$-cochain and $\kappa^C$ is a constant $2$-cochain, the quotient algebra
$$
\mathcal{H}_{\kappa}=T(V)\#G/\langle vw-wv-\kappa^L(v,w)-\kappa^C(v,w)\mid v,w\in V\rangle
$$
is called a \standout{Drinfeld orbifold algebra} if the associated graded algebra $\gr\mathcal{H}_{\kappa}$ is isomorphic to the skew group algebra
$S(V)\# G$.  The condition $\gr\mathcal{H}_{\kappa}\cong S(V)\# G$ is
called a \standout{Poincar\'{e}-Birkhoff-Witt (PBW) condition}, in analogy with
the PBW Theorem for universal enveloping algebras.

Further, if $\mathcal{H}_{\kappa}$ is a Drinfeld orbifold algebra and $t$ is a complex parameter,
then 
$$
\mathcal{H}_{\kappa,t}:=T(V)\#G[t]/\langle vw-wv-\kappa^L(v,w)t-\kappa^C(v,w)t^2\mid v,w\in V\rangle
$$
is called a \standout{Drinfeld orbifold algebra over $\CC[t]$}.
In~\cite[Theorem~2.1]{SWorbifold}, Shepler and Witherspoon make an explicit
connection between the PBW condition and deformations in the sense of Gerstenhaber~\cite{GerstenhaberSchack} by showing how to interpret Drinfeld orbifold
algebras over $\CC[t]$ as formal deformations of the skew group algebra
$S(V)\# G$.  We summarize the broader context of formal deformations in Section~\ref{sec:cohomology}.

\subsection*{Drinfeld orbifold algebra maps}
Though the defining PBW condition for a Drinfeld orbifold algebra $\mathcal{H}_{\kappa}$ involves an isomorphism of algebras, Shepler and Witherspoon proved an equivalent characterization~\cite[Theorem~3.1]{SWorbifold} in terms of properties of the parameter map $\kappa$.

\begin{definition}\label{def:fourconditions}
	Let $\kappa=\kappa^L+\kappa^C$ where $\kappa^L$ is a linear $2$-cochain and $\kappa^C$ is a constant $2$-cochain, and let $\Alt_3$ denote the alternating group on three elements.
	We say $\kappa$ is a {\bf Drinfeld orbifold algebra map} if the following conditions are satisfied for all $g\in G$ and $v_1,v_2,v_3\in V$:
	\addtocounter{equation}{-1} 
	\begin{equation}\label{def:fourconditions-0}
		 \im\kappa^L_g\subseteq V^g,
	\end{equation}
	\begin{equation}\label{def:fourconditions-i}
		\text{the map $\kappa$ is $G$-invariant,}
	\end{equation}
	\begin{equation}\label{def:fourconditions-ii}
		\sum_{\sigma\in\Alt_3}\kappa_g^L(v_{\sigma(2)},v_{\sigma(3)})(\actby{g}v_{\sigma(1)}-v_{\sigma(1)})=0\text{ in $S(V)$,}
	\end{equation}
	\begin{equation}\label{def:fourconditions-iii}
		\sum_{\sigma\in\Alt_3}\sum_{xy=g}
		\kappa_{x}^L(v_{\sigma(1)}+\actby{y}v_{\sigma(1)},\kappa_y^L(v_{\sigma(2)},v_{\sigma(3)}))
		=2\sum_{\sigma\in\Alt_3}\kappa_g^C(v_{\sigma(2)},v_{\sigma(3)})(\actby{g}v_{\sigma(1)}-v_{\sigma(1)}),
	\end{equation}
	\begin{equation}\label{def:fourconditions-iv}
		\sum_{\sigma\in\Alt_3}\sum_{xy=g}
		\kappa_{x}^C(v_{\sigma(1)}+\actby{y}v_{\sigma(1)},\kappa_y^L(v_{\sigma(2)},v_{\sigma(3)}))=0.
	\end{equation}
	As a special case, if the linear component $\kappa^L$ of
	a Drinfeld orbifold algebra map is supported only on the identity, then we also call $\kappa$ a \standout{Lie orbifold algebra map}.
\end{definition}

\begin{remark}\label{remark:image}
	If $\mathcal{H}_{\kappa}$ is a Drinfeld orbifold algebra, then $\kappa$ must satisfy conditions~(\ref{def:fourconditions-i})-(\ref{def:fourconditions-iv}), but not necessarily the image constraint~(\ref{def:fourconditions-0}).  
	However,~\cite[Theorem~7.2 (ii)]{SWorbifold} guarantees there will exist a 
	Drinfeld orbifold algebra $\mathcal{H}_{\widetilde{\kappa}}$ such that
	$\mathcal{H}_{\widetilde{\kappa}}\cong\mathcal{H}_{\kappa}$ as
	filtered algebras and $\widetilde{\kappa}$ satisfies the image constraint
	$\im \widetilde{\kappa}^L_g\subseteq V^g$ for each $g$ in $G$.
	Thus, in classifying Drinfeld orbifold algebras, it suffices to only 
	consider Drinfeld orbifold algebra maps.
\end{remark}

\begin{theorem}[{\cite[Theorem~3.1 and Theorem~7.2 (ii)]{SWorbifold}}] 
	A quotient algebra $\mathcal{H}_{\kappa}$ satisfies the PBW condition
	$\gr\mathcal{H}_{\kappa}\cong S(V)\# G$ if and only if there exists a
	Drinfeld orbifold algebra map $\widetilde{\kappa}$ such that $\mathcal{H_{\kappa}}\cong\mathcal{H_{\widetilde{\kappa}}}$.
\end{theorem}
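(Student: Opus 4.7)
The statement is an \textbf{if and only if} combining \cite[Theorem~3.1]{SWorbifold} (the ring-theoretic characterization of PBW in the presence of the image constraint~(\ref{def:fourconditions-0})) with \cite[Theorem~7.2(ii)]{SWorbifold} (a gauge-reduction procedure), and my plan is to attack the two directions separately. The reverse direction is the short one: given a Drinfeld orbifold algebra map $\widetilde{\kappa}$ satisfying all of~(\ref{def:fourconditions-0})--(\ref{def:fourconditions-iv}), Theorem~3.1 of~\cite{SWorbifold} applies directly to give $\gr \mathcal{H}_{\widetilde{\kappa}}\cong S(V)\# G$. A filtered isomorphism $\mathcal{H}_{\kappa}\cong\mathcal{H}_{\widetilde{\kappa}}$ passes to associated graded objects, transporting the PBW property back to $\mathcal{H}_{\kappa}$.

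For the forward direction, assume the PBW condition. I would first extract conditions~(\ref{def:fourconditions-i})--(\ref{def:fourconditions-iv}) for $\kappa$ itself (without yet worrying about the image constraint) via a Bergman diamond / Composition-Diamond argument. Fix a monomial ordering on $T(V)\#G$ in which vector letters dominate group letters, and view each defining relation as a rewriting rule $vw\rightsquigarrow wv+\kappa^L(v,w)+\kappa^C(v,w)$. Applying the two possible reductions to each cubic overlap $v_1 v_2 v_3$ and demanding they agree produces, after separation by $S(V)$-degree and $\Alt_3$-symmetrization, exactly~(\ref{def:fourconditions-ii}),~(\ref{def:fourconditions-iii}), and~(\ref{def:fourconditions-iv}); the sums over factorizations $xy=g$ appear because commuting a vector past a group element via $gv=\actby{g}v\cdot g$ is unavoidable when rewriting mixed monomials. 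The remaining ambiguities $v_1 v_2\cdot g$ enforce the $G$-invariance condition~(\ref{def:fourconditions-i}).

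To upgrade $\kappa$ to a $\widetilde{\kappa}$ additionally satisfying the image constraint~(\ref{def:fourconditions-0}), fix a $G$-invariant inner product on $V$ (available since $G$ is finite and the base field is $\CC$) and decompose each $\kappa^L_g$ orthogonally as $\pi_g+\rho_g$ with $\im \pi_g \subseteq V^g$ and $\im \rho_g \subseteq (V^g)^{\perp}$. The Hochschild-cohomological viewpoint anticipated in Section~\ref{sec:cohomology} identifies the off-$V^g$ piece $\rho_g$ as a $2$-coboundary coming from a $1$-cochain $\beta_g$ valued in $(V^g)^{\perp}$. Adding $\beta$ to $\kappa$ is implemented on the algebra side by a filtered change of generators $v\mapsto v+\beta(v)$ in $T(V)\# G$, which yields an isomorphic quotient $\mathcal{H}_{\widetilde{\kappa}}$ whose new linear part is $\widetilde{\kappa}^L_g=\pi_g$ (and whose constant part $\widetilde{\kappa}^C$ is adjusted accordingly so that (\ref{def:fourconditions-i})--(\ref{def:fourconditions-iv}) persist).

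The main obstacle I foresee is the cubic overlap bookkeeping in the diamond step. Rewriting an intermediate monomial of the form $\kappa^L_y(v_2,v_3)\cdot v_1$ requires further applications of the commutation rule to push group elements past vectors, and each such push spawns a fresh $\kappa^L$ or $\kappa^C$ contribution; keeping simultaneous track of these contributions, the $\Alt_3$-symmetrization, and the factorization sums $xy=g$ is the technical heart of the argument and is what produces the precise form of~(\ref{def:fourconditions-iii}) and~(\ref{def:fourconditions-iv}) rather than a more naive Jacobi-like identity.
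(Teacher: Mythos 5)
The paper offers no proof of this statement---it is quoted directly from \cite[Theorems~3.1 and~7.2(ii)]{SWorbifold}---and your sketch reconstructs precisely the strategy of those cited proofs: a Composition-Diamond/Gr\"obner-basis overlap analysis on cubic ambiguities (separated by polynomial degree) yielding conditions~(\ref{def:fourconditions-i})--(\ref{def:fourconditions-iv}), together with a cohomological change of generators $v\mapsto v+\beta(v)$ to arrange the image constraint~(\ref{def:fourconditions-0}), exactly as anticipated in Remark~\ref{remark:image}. Your outline is correct and consistent with the approach the paper attributes to the source.
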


The process of determining the set of all Drinfeld orbifold algebra maps 
consists of two phases.
For reasons discussed in Section~\ref{sec:cohomology}, we use language from
cohomology and deformation theory to describe each phase.  First, one finds
all \standout{pre-Drinfeld orbifold algebra maps}, i.e., all $G$-invariant linear $2$-cochains $\kappa^L$ satisfying Properties~(\ref{def:fourconditions-0}) and (\ref{def:fourconditions-ii}).
A bijection between pre-Drinfeld orbifold algebra maps
and a particular set of representatives of Hochschild cohomology classes
facilitates this step (see Lemma~\ref{ineedaname}).
Second, we determine for which pre-Drinfeld orbifold algebra maps $\kappa^L$ there exists a compatible $G$-invariant constant
$2$-cochain $\kappa^C$ such that Properties~(\ref{def:fourconditions-iii}) and (\ref{def:fourconditions-iv}) hold.
We say $\kappa^C$ \standout{clears the first obstruction} if Property~(\ref{def:fourconditions-iii}) holds
and \standout{clears the second obstruction} if Property~(\ref{def:fourconditions-iv}) holds.
If a $G$-invariant constant $2$-cochain $\kappa^C$ clears both obstructions, then
we say $\kappa^L$ \standout{lifts} to the Drinfeld orbifold algebra map $\kappa=\kappa^L+\kappa^C$.

\section{Orbifold algebras for symmetric groups}\label{sec:OrbifoldAlgebras}

Let $e_1,\ldots,e_n$ be the standard basis of $V\cong\CC^n$.
Let the symmetric group $S_n$ act on $V$ by its natural permutation
representation, so $\actby{\sigma}e_i=e_{\sigma(i)}$ for $\sigma$ in $S_n$.
  The main effort of this paper is in proving Theorems~\ref{thm:LieOrbifoldAlgebraMaps} and~\ref{thm:DrinfeldOrbifoldAlgebraMaps}, which describe all Drinfeld orbifold algebra maps for $S_n$ acting by the natural permutation representation.  As corollaries of the theorems in Section~\ref{sec:mainresult}, we present here the resulting PBW deformations of the skew group algebra $S(V)\# S_n$ via generators and relations.

First, the one-dimensional space of Lie orbifold algebra maps classified
in Theorem~\ref{thm:LieOrbifoldAlgebraMaps} yields a family of Lie orbifold
algebras arising as deformations of $S(V)\# S_n$.

\begin{theorem}[{Lie Orbifold Algebras over $\CC[t]$}]\label{Lieorbifold} 
Let the symmetric group $S_n$ ($n\geq3$) act on $V\cong\CC^n$ by
its natural permutation representation. Then for $a\in\CC$,
$$
\mathcal{H}_{\kappa,t}=T(V)\# S_n[t]/\langle e_ie_j-e_je_i-a(e_i-e_j)t\mid 1 \leq i < j\leq n\rangle
$$
is a Lie orbifold algebra over $\CC[t]$.  Further,
the algebras $\mathcal{H}_{\kappa,1}$ are precisely the Drinfeld orbifold algebras such that $\kappa^L$ is supported only on the identity.
\end{theorem}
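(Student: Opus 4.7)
The plan is to deduce this theorem as a direct corollary of Theorem~\ref{thm:LieOrbifoldAlgebraMaps}, which classifies the Lie orbifold algebra maps for $S_n$ acting by the natural permutation representation as a single one-parameter family. The substantive work lies in that forward reference; the task here is to translate the abstract parameter map into an explicit presentation by generators and relations, and to establish the converse characterization.

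First, I would invoke Theorem~\ref{thm:LieOrbifoldAlgebraMaps} to write the one-parameter family explicitly as $\kappa = \kappa^L + \kappa^C$ with $\kappa^L_1(e_i, e_j) = a(e_i - e_j)$ for $i < j$, all other components of $\kappa^L$ equal to zero, and $\kappa^C = 0$. Substituting into the definition of $\mathcal{H}_{\kappa, t}$ from Section~\ref{sec:preliminaries} collapses the defining relation $e_i e_j - e_j e_i - \kappa^L(e_i, e_j)\,t - \kappa^C(e_i, e_j)\,t^2$ to the displayed relation $e_i e_j - e_j e_i - a(e_i - e_j)\,t$. Since $\kappa$ is a Drinfeld orbifold algebra map whose linear part is supported only on the identity, the definition of a Lie orbifold algebra over $\CC[t]$ given in Section~\ref{sec:preliminaries} yields the first claim. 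For the ``precisely'' statement, one inclusion is immediate from the family itself; for the other, given a Drinfeld orbifold algebra $\mathcal{H}_{\widetilde\kappa, 1}$ whose linear part is supported only on the identity, Remark~\ref{remark:image} allows replacement by a Drinfeld orbifold algebra map $\widetilde\kappa$ with the image constraint automatically satisfied (since $V^1 = V$), hence by definition a Lie orbifold algebra map. Theorem~\ref{thm:LieOrbifoldAlgebraMaps} then forces $\widetilde\kappa$ into the above family, so that $\mathcal{H}_{\widetilde\kappa, 1} \cong \mathcal{H}_{\kappa,1}$ for a uniquely determined $a \in \CC$ via the identification of generators and relations.

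The true obstacle is not in this corollary but in Theorem~\ref{thm:LieOrbifoldAlgebraMaps}: one must parametrize the $S_n$-invariant linear $2$-cochains supported on the identity that satisfy conditions~(\ref{def:fourconditions-0}) and~(\ref{def:fourconditions-ii}), and then verify that the obstruction conditions~(\ref{def:fourconditions-iii}) and~(\ref{def:fourconditions-iv}) force $\kappa^C = 0$, so that the family genuinely has only one parameter. The absence of a $t^2$ term in the displayed relation is precisely the visible shadow of $\kappa^C = 0$, and is the only point at which the deeper classification does substantive work inside the present argument.
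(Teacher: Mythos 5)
Your proposal is correct and matches the paper's route exactly: the paper presents Theorem~\ref{Lieorbifold} as an immediate corollary of Theorem~\ref{thm:LieOrbifoldAlgebraMaps} (which does all the substantive work of classifying the Lie orbifold algebra maps and forcing $\kappa^C=0$), with the translation into generators and relations and the appeal to Remark~\ref{remark:image} for the converse being exactly the glue you describe. Your observation that the image constraint~(\ref{def:fourconditions-0}) is vacuous on the identity component, so no genuine replacement of $\kappa$ is needed, is the right way to make the ``precisely'' clause airtight.
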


Second, in Theorem~\ref{thm:DrinfeldOrbifoldAlgebraMaps}, we determine (for $S_n$) all Drinfeld orbifold algebra maps such that the linear component $\kappa^L$ is
supported only off the identity. 
The relations in the consequent PBW deformations of $S(V)\# S_n$ involve sums of basis vectors over certain subsets of $[n]:=\{1,\ldots,n\}$.  For $I\subseteq[n]$, 
let $e_I=\sum_{i\in I}e_i$, 
and let $e_I^{\perp}$ denote the complementary vector $e_{[n]}-e_I$.

\begin{theorem} \label{mainexamples}
    Let the symmetric group $S_n$ ($n\geq3$) act on $V\cong\CC^n$
    by its natural permutation representation. For $a,b,c\in\CC$ and
    $1\leq i<j\leq n$ let
    $$
    \kappa^L(e_i,e_j)=\sum_{k\neq i,j} (ae_{\{i,j,k\}}+b{e^{\perp}_{\{i,j,k\}}})\otimes ((ijk)-(kji) ),
    $$
    and let
    $$
    \kappa^C(e_i,e_j)=c\sum_{k\neq i,j}((ijk)-(kji) )+ (a-b)^2\left(\sum_{\substack{\sigma\text{ a $5$-cycle}\\ \sigma^2(i)=j}}\hspace{-10pt}2(\sigma-\sigma^{-1})
    -\sum_{\substack{\sigma\text{ a $5$-cycle}\\ \sigma(i)=j}}\hspace{-10pt}(\sigma-\sigma^{-1})\right).
    $$
    Then
	$$
	\mathcal{H}_{\kappa,t}=T(V)\# S_n[t]/\langle e_ie_j-e_je_i-\kappa^L(e_i,e_j)t-\kappa^C(e_i,e_j)t^2\mid 1\leq i<j\leq n\rangle
	$$
	is a Drinfeld orbifold algebra over $\CC[t]$.  Further,
	the algebras $\mathcal{H}_{\kappa,1}$ are precisely the Drinfeld orbifold algebras such that $\im\kappa_g^L\subseteq V^g$ for each $g\in S_n$ and
	$\kappa^L$ is supported only off the identity.
\end{theorem}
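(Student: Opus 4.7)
The plan is to derive this theorem as a direct corollary of Theorem~\ref{thm:DrinfeldOrbifoldAlgebraMaps}, which (in Section~\ref{sec:mainresult}) classifies all Drinfeld orbifold algebra maps $\kappa=\kappa^L+\kappa^C$ for $S_n$ whose linear component is supported off the identity and whose image satisfies $\im\kappa^L_g\subseteq V^g$. The task reduces to identifying the explicit $\kappa^L$ and $\kappa^C$ in the statement with the three-parameter family produced by that classification, and noting that the conditions of Definition~\ref{def:fourconditions} are homogeneous, so the passage from $t=1$ to arbitrary $t$ is automatic.

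First I would check that $\kappa^L$ is supported only on $3$-cycles (since $(kji)=(ijk)^{-1}$), and that for $g=(ijk)$ the vector $ae_{\{i,j,k\}}+be_{\{i,j,k\}}^{\perp}$ lies in $V^g$, because both summands are fixed by $g$. The $S_n$-invariance of $\kappa$ then follows by conjugation, using that $\tau(ijk)\tau^{-1}=(\tau(i)\tau(j)\tau(k))$ and tracking the induced permutation action on the summands. In fact, invariance under the centralizer of $(ijk)$ — which permutes the outside indices $[n]\setminus\{i,j,k\}$ — forces the image of $\kappa^L_{(ijk)}$ to lie in the two-dimensional span of $e_{\{i,j,k\}}$ and $e_{\{i,j,k\}}^{\perp}$, so the parameters $a,b$ exhaust the freedom in the linear part that is consistent with Definition~\ref{def:fourconditions}(\ref{def:fourconditions-0}) and (\ref{def:fourconditions-i}).

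Next I would verify that the given $\kappa^C$ clears both obstructions. The $c$-term on $3$-cycles contributes a free parameter unconstrained by the obstructions, supplying the third parameter of the family. The $5$-cycle portion of $\kappa^C$ is forced by the first obstruction~(\ref{def:fourconditions-iii}): expanding $\sum_{xy=g}\kappa^L_x(v+\actby{y}v,\kappa^L_y(\cdot,\cdot))$ for $g$ a $5$-cycle produces degree-one terms quadratic in the linear parameters, and these cannot be absorbed into $\kappa^C_g\cdot(\actby{g}v-v)$ unless the $5$-cycle coefficients of $\kappa^C$ take the stated form, including the coefficient $(a-b)^2$ and the factor of $2$ distinguishing the $\sigma^2(i)=j$ and $\sigma(i)=j$ cases. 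The second obstruction~(\ref{def:fourconditions-iv}) is then a separate linear condition on $\kappa^C$; one checks that it holds automatically for this $\kappa^C$, because the contributions from the $c$-part and the $5$-cycle part each vanish after a sum over the factorizations $xy=g$.

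The main obstacle — carried out in Section~\ref{sec:mainresult} — is the enumeration and cancellation of factorizations $xy=g$ appearing in the obstruction sums, particularly for $g$ a $5$-cycle. The reduction techniques of Section~\ref{sec:reduction}, especially the orbit reduction of Lemma~\ref{le:orbitreduction}, together with the fixed-point-space simplifications in Lemmas~\ref{lemma:simplification} and~\ref{le:simplification53}, reduce what would otherwise be an unwieldy combinatorial computation to a tractable case analysis. Once $\kappa$ is established as a Drinfeld orbifold algebra map, \cite[Theorem~3.1]{SWorbifold} gives the PBW condition at $t=1$, the homogeneity of Definition~\ref{def:fourconditions} in $t$ yields the $\CC[t]$-version, and the "precisely" clause follows by combining Remark~\ref{remark:image} with the uniqueness portion of Theorem~\ref{thm:DrinfeldOrbifoldAlgebraMaps}.
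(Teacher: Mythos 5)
Your proposal matches the paper's own route: Theorem~\ref{mainexamples} is presented there exactly as a corollary of Theorem~\ref{thm:DrinfeldOrbifoldAlgebraMaps}, obtained by identifying the displayed $\kappa^L$ and $\kappa^C$ with $\kappa^L_{\tri}+\kappa^C_{\tri}+\kappa^C_{\penta}$ from Definitions~\ref{def:kappatri} and~\ref{def:kappa5cyc}, with the obstruction-clearing computations deferred to Propositions~\ref{prop:zerocases}--\ref{prop:phiC5L3=0}. The identification, the use of Remark~\ref{remark:image} for the ``precisely'' clause, and the passage to $\CC[t]$ are all as in the paper, so the proposal is correct and essentially identical in approach.
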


We illustrate Theorem~\ref{mainexamples} for some small values of $n$.
Note that the parameter $b$ is irrelevant when $n=3$,
and the sums over $5$-cycles are absent in the cases $n=3$ and $n=4$.

\begin{example}
	For the symmetric group $S_3$ acting on $V\cong\CC^3$ by the natural permutation representation, the Drinfeld orbifold algebras such that
	$\im\kappa_g^L\subseteq V^g$ for each $g\in S_3$ and $\kappa_1^L=0$ are the algebras of
	the form
	$$
	\mathcal{H}_{\kappa}=T(V)\# S_3/\langle e_ie_j-e_je_i-\kappa(e_i,e_j)\mid 1\leq i<j\leq 3\}\rangle,
	$$
	where for some $a,c\in \CC$
  	$$
  	\kappa(e_1,e_2)=\kappa(e_2,e_3)=\kappa(e_3,e_1)=(a(e_1+e_2+e_3)+c)\otimes((123)-(321)).
  	$$
  	This example coincides with~\cite[Example 3.4]{SWorbifold} with a change of basis.
\end{example}

\begin{example} For the symmetric group $S_4$ acting on $V\cong\CC^4$ by the natural permutation representation, the Drinfeld orbifold
	algebras such that $\im\kappa_g^L\subseteq V^g$ for each $g\in S_4$ and $\kappa_1^L=0$ are
	the algebras of the form
	$$
	\mathcal{H}_{\kappa}=T(V)\# S_4/\langle e_ie_j-e_je_i-\kappa(e_i,e_j)\mid 1\leq i<j\leq 4\}\rangle,
	$$
	where
	\begin{alignat*}{2}
		\kappa(e_1,e_2) &= &(a(e_1+e_2+e_3)+be_4+c)\otimes((123)-(321))\\
		&&+(a(e_1+e_2+e_4)+be_3+c)\otimes((124)-(421)),
	\end{alignat*}
	and $\kappa(e_{\sigma(1)},e_{\sigma(2)})=\actby{\sigma}(\kappa(e_1,e_2))$ 
	for $\sigma$ in $S_4$.  (In acting by $\sigma$, recall that $\actby{\sigma}e_i=e_{\sigma(i)}$ and
	$\actby{\sigma}\tau=\sigma\tau\sigma^{-1}$ for $\sigma,\tau\in S_n$.)
\end{example}

\begin{example} For the symmetric group $S_5$ acting on $V\cong\CC^5$ by the natural permutation representation, the Drinfeld orbifold
	algebras such that $\im\kappa_g^L\subseteq V^g$ for each $g\in S_5$ and $\kappa_1^L=0$ are
	the algebras of the form
	$$
	\mathcal{H}_{\kappa}=T(V)\# S_5/\langle e_ie_j-e_je_i-\kappa(e_i,e_j)\mid 1\leq i<j\leq 5\}\rangle,
	$$
	where
\begin{align*}
	\kappa(e_1,e_2) &= &&\phantom{mm}(a(e_1+e_2+e_3)+b(e_4+e_5)+c)\otimes ((123)-(321))\\
	& &&+\phantom{2}(a(e_1+e_2+e_4)+b(e_5+e_3)+c)\otimes ((124)-(421)) \\
	& &&+\phantom{2}(a(e_1+e_2+e_5)+b(e_3+e_4)+c)\otimes ((125)-(521)) \\
	& &&-\phantom{2}(a-b)^2 \otimes ((12345)+(12543)+(12453)+(12354)+(12534)+(12435)) \\
	& &&+\phantom{2}(a-b)^2 \otimes ((21345)+(21543)+(21453)+(21354)+(21534)+(21435)) \\
	& &&-2(a-b)^2 \otimes ((23145)+(25143)+(24153)+(23154)+(25134)+(24135)) \\
	& &&+2(a-b)^2 \otimes ((13245)+(15243)+(14253)+(13254)+(15234)+(14235)),
\end{align*}
and $\kappa(e_{\sigma(1)},e_{\sigma(2)})=\actby{\sigma}(\kappa(e_1,e_2))$ 
for $\sigma$ in $S_5$.
\end{example}

\begin{remark} If we specialize to $t=1$ and let $a=b=0$ in Theorem~\ref{mainexamples}, then the linear component $\kappa^L$ is identically zero, thus recovering Drinfeld graded Hecke algebras for $S_n$.
\end{remark}

\section{Deformation algebras and Hochschild cohomology}\label{sec:cohomology}

Our goal in this section is to describe linear and constant
$2$-cochains $\kappa$ that are $G$-invariant and satisfy the \standout{mixed Jacobi identity}$$
[v_1,\kappa(v_2,v_3)]+[v_2,\kappa(v_3,v_1)]+[v_3,\kappa(v_1,v_2)]=0\quad\text{in $S(V)\# G$.}
$$
When $\kappa$ is expanded as $\sum_{g\in G}\kappa_g g$, it becomes clear that the
mixed Jacobi identity for $\kappa^L$ is equivalent to Property~(\ref{def:fourconditions-ii}) of a Drinfeld orbifold algebra map, and the mixed Jacobi identity for $\kappa^C$ is equivalent to
Property~(\ref{def:fourconditions-iii}) in the special case that the left side of (\ref{def:fourconditions-iii}) is zero.
In light of the relation between Drinfeld orbifold algebras and formal deformations, Hochschild cohomology becomes a tool to facilitate finding Drinfeld orbifold algebra maps,
as summarized in Lemma~\ref{ineedaname}. 
We first review some background on
deformation theory and cohomology before turning to the specific case
of the symmetric groups.

\subsection*{Deformations and Hochschild cohomology}
Let $A$ be an algebra over $\CC$.  
For a complex parameter $t$, a \standout{deformation over $\CC[t]$} 
of $A$ is the vector space $A[t]$ with an associative multiplication $*$,
which is $\CC[t]$-bilinear and for $a,b$ in $A$ is recorded in the form 
$$
a * b = ab + \mu_1(a\otimes b)t + \mu_2(a\otimes b)t^2 +\cdots
$$
for some maps $\mu_i:A\otimes A\rightarrow A$ with the sum finite for each pair $a,b$.  Identifying coefficients on
$t^i$ in the expressions $a*(b*c)$ and $(a*b)*c$ yields
a cohomological relation involving the maps $\mu_1,\ldots,\mu_i$.
For example, identifying coefficients of $t$ shows that $\mu_1$ is a \standout{Hochschild $2$-cocycle},
and identifying coefficients of $t^2$ shows the
(Hochschild) coboundary of $\mu_2$ must be half of the Gerstenhaber bracket of $\mu_1$
with itself.

Generally, for an $A$-bimodule $M$ the \standout{Hochschild cohomology}
of $A$ with coefficients in $M$ is $\HH^{\mol}(A,M):=\Ext_{A\otimes A^{\text{op}}}^{\mol}(A,M)$, and if $M=A$, we simply write $\HH^{\mol}(A)$.
Hochschild cohomology may be computed using various resolutions, each with their 
own advantages.   The maps $\mu_i$ defining
the multiplication of a formal deformation algebra are most easily regarded
as cochains on a bar resolution.  
However, when $A$ is a skew group algebra, advantageous formulations 
of Hochschild cohomology arise from a Koszul resolution and frame cohomology in 
terms of invariant theory.
Conversions between the bar complex and Koszul complex are key to the proof of
\cite[Theorem~2.1]{SWorbifold} that shows how to interpret a Drinfeld orbifold algebra over $\CC[t]$ as a formal deformation of a skew group algebra. 
The parameter map $\kappa$ of a Drinfeld orbifold algebra $\mathcal{H}_{\kappa,t}$ over $\CC[t]$ may be identified with a cochain on the
Koszul complex, and the linear part $\kappa^L$ relates to the first multiplication map $\mu_1$, 
while the constant part $\kappa^C$ relates to the second multiplication
map $\mu_2$ (see~\cite[Remark~2.5]{SWorbifold}). 

Cohomological relations involving the maps $\mu_i$ 
have implications for the components of the parameter map $\kappa$.
Indeed, the conditions on $\kappa$ given in~\cite[Theorem~3.1]{SWorbifold}
have a parallel statement~\cite[Theorem~7.2]{SWorbifold} 
in terms of cohomological spaces and operations.  While Properties~(\ref{def:fourconditions-i}) and (\ref{def:fourconditions-ii}) of a 
Drinfeld orbifold algebra map are stated with minimal machinery, the cohomological 
interpretations aid in organizing computations and also reveal some hidden implications emphasized in Remark~\ref{remark:codim}.

We now record descriptions of the cohomological spaces we use in our
computations and refer the reader to~\cite{SWorbifold}, for example, for more details on the bar and Koszul resolutions, chain maps, and isomorphisms that lead to these spaces. 
Let $G$ be a finite group acting linearly on a vector space $V\cong\CC^n$.
Let $H^{\mol}$ be the $G$-graded vector space $H^{\mol}=\bigoplus_{g\in G}H^{\mol}_g$
with components
$$
H^{p,d}_g=
S^d(V^{g}) \otimes 
\bigwedge^{p-\codim(V^g)}(V^g)^{*} \otimes
\bigwedge^{\codim(V^g)}\bigl((V^g)^*\bigr)^{\perp} \otimes
\mathbb{C}g,
$$
where $V^g$ is the fixed point space of $g$. Thus $H^{\mol}$ is tri-graded by
cohomological degree $p$, homogeneous polynomial degree $d$, and group element $g$.
For any set $R$ carrying a $G$-action, we write $R^G$ for the set of elements fixed
by every $g$ in $G$.
With the group $G$ acting diagonally on the tensor product (and with conjugation on
the group algebra factor),
the Hochschild cohomology of $S(V)\# G$ can be computed using the
series of isomorphisms
$$
\HH^{\mol}(S(V)\#G)
\cong\HHSSG{\mol}^G
\cong (H^{\mol})^G.
$$
The first isomorphism follows from \c{S}tefan~\cite{Stefan} (for example),
and the description of $H^{\mol}$ was first given independently by
Farinati~\cite{Farinati} and by Ginzburg-Kaledin~\cite{GinzburgKaledin}.

Note that, together, the exterior factors of $H_g^{p,d}$ identify with a
subspace of $\bigwedge^pV^*$, and then, since $S^d(V^g)\otimes\bigwedge^pV^*\otimes\CC g\cong\Hom(\bigwedge^p V,S^d(V^g)g)$, the space $H^{\mol}$ may be identified with a subspace of the cochains introduced in Section~\ref{sec:preliminaries}. The next lemma 
records the relationship between Properties~(\ref{def:fourconditions-i}) and (\ref{def:fourconditions-ii}) of a Drinfeld orbifold algebra map and Hochschild cohomology.  When $d=1$, the lemma is a restatement of~\cite[Theorem~7.2 (i) and (ii)]{SWorbifold}. When $d=0$, the lemma is a restatement of~\cite[Corollary 8.17(ii)]{SheplerWitherspoon08}.
Despite its cohomological heritage, it is also possible to give a 
linear algebraic proof of Lemma~\ref{ineedaname} 
in the spirit of~\cite[Lemma~1.8]{RamShepler}.

\begin{lemma}\label{ineedaname}
	For a $2$-cochain $\kappa=\sum_{g\in G}\kappa_g g$ with $\im \kappa_g\subseteq S^d(V^g)$ for each $g\in G$, 
	the following are equivalent:
	\begin{enumerate}\setlength\itemsep{.5em}
		\item[(a)] The map $\kappa$ is $G$-invariant and satisfies
		the mixed Jacobi identity, i.e., for all $v_1,v_2,v_3\in V$
		$$
		[v_1,\kappa(v_2,v_3)]+[v_2,\kappa(v_3,v_1)]+[v_3,\kappa(v_1,v_2)]=0\quad\text{in $S(V)\# G$,}
		$$
		where $[\cdot,\cdot]$ denotes the commutator in $S(V)\# G$.
		\item[(b)] For all $g,h\in G$ and $v_1,v_2,v_3\in V$:
		\begin{enumerate}
			\item[(i)] $\actby{h}(\kappa_g(v_1,v_2))=\kappa_{hgh^{-1}}(\actby{h}v_1,\actby{h}v_2)$ and
			\item[(ii)] $\kappa_g(v_1,v_2)(\actby{g}v_3-v_3)
			+\kappa_g(v_2,v_3)(\actby{g}v_1-v_1)
			+\kappa_g(v_3,v_1)(\actby{g}v_2-v_2)=0$.
		\end{enumerate}
		\item[(c)] The map $\kappa$ is an element of
		$$
		(H^{2,d})^G=\left(\bigoplus_{g \in G}
		\Bigl(\:
		S^d(V^{g})g \otimes 
		\bigwedge^{2-\codim(V^g)}(V^g)^{*} \otimes
		\bigwedge^{\codim(V^g)}\bigl((V^g)^*\bigr)^{\perp}
		\:\Bigr)\right)^G.
		$$ 
	\end{enumerate}
\end{lemma}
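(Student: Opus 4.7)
The plan is to prove the chain of equivalences (a) $\Leftrightarrow$ (b) $\Leftrightarrow$ (c), where the first link is a direct calculation in $S(V)\#G$ and the second link unpacks the concrete description of $H^{2,d}$ in terms of exterior and symmetric algebra factors, in the spirit of the linear algebraic approach suggested by the remark citing \cite{RamShepler}.

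For (a) $\Leftrightarrow$ (b), the key identity in the skew group algebra is $[v, s\otimes g] = (v - \actby{g}v)s \otimes g$, valid for $v\in V$, $s\in S(V)$, and $g\in G$. Applying this termwise to the mixed Jacobi identity for $\kappa = \sum_{g} \kappa_g g$ produces an element of $S(V)\otimes\CC G$ whose component at each group element is, up to an overall sign, exactly the left-hand side of (b)(ii). Since distinct group elements are linearly independent in $\CC G$, the single identity in $S(V)\#G$ splits into the family of identities in (b)(ii), one per $g\in G$. Similarly, the equation $\actby{h}\kappa = \kappa$ for all $h\in G$, after the conjugation shift $g\mapsto hgh^{-1}$, compares coefficients of group elements to yield (b)(i).

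For (b) $\Leftrightarrow$ (c), condition (b)(i) is precisely the $G$-invariance under the diagonal action in (c), so the real content is showing, under the image constraint $\im\kappa_g\subseteq S^d(V^g)$, that the componentwise identity (b)(ii) is equivalent to each $\kappa_g$ lying in the subspace $S^d(V^g)\otimes \bigwedge^{2-\codim(V^g)}(V^g)^{*}\otimes\bigwedge^{\codim(V^g)}((V^g)^{*})^{\perp}$ of $S^d(V^g)\otimes\bigwedge^{2}V^{*}$. To establish this, I would fix $g$, decompose $V=V^g\oplus W_g$ with $W_g := (g-1)V$, and exploit that $(g-1)$ annihilates $V^g$ and restricts to an invertible operator on $W_g$. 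Viewed in the polynomial ring $S(V)=S(V^g)\otimes S(W_g)$, the values of $\kappa_g$ live in $S(V^g)$ while the factors $\actby{g}v_i - v_i$ live in $W_g$, separating the two sides of each summand in (b)(ii).

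The main obstacle is the case analysis on $k:=\codim(V^g)$ needed to match (b)(ii) with the correct exterior-product type. In the case $k=0$ the identity is automatic and $\kappa_g$ is an unrestricted alternating form, matching $\bigwedge^{2}(V^g)^{*}$. In the cases $k=1$ and $k=2$, evaluating (b)(ii) on triples with various numbers of entries from $V^g$ and using linear independence of the $(g-1)v_i$ in $W_g$ forces $\kappa_g$ to vanish on the summands of $\bigwedge^{2}V^{*}$ complementary to the prescribed one. In the case $k\geq 3$, one can choose three linearly independent vectors in $W_g$ whose images under $(g-1)$ are again linearly independent, which forces $\kappa_g$ to vanish entirely and matches $H^{2,d}_g = 0$. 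The organizing principle throughout is that invertibility of $(g-1)|_{W_g}$ converts (b)(ii) into polynomial identities in $S(V^g)\otimes S(W_g)$ whose coefficients can be read off directly.
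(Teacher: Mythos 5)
Your proof is correct, but note that the paper does not actually write out a proof of this lemma: it is established by citation, as a restatement of \cite[Theorem~7.2 (i) and (ii)]{SWorbifold} when $d=1$ and of \cite[Corollary~8.17(ii)]{SheplerWitherspoon08} when $d=0$, with only a passing remark that a linear-algebraic proof in the spirit of \cite[Lemma~1.8]{RamShepler} is possible. What you have produced is precisely that alternative, self-contained argument, and it is sound: the identity $[v,s\otimes g]=(v-\actby{g}v)s\otimes g$ together with linear independence of group elements in $\CC G$ reduces (a) to (b) (the invariance half being essentially the paper's definition of a $G$-invariant cochain unwound), and the decomposition $V=V^g\oplus W_g$ with $W_g=(g-1)V$ and $(g-1)|_{W_g}$ invertible (valid since $g$ has finite order, hence is diagonalizable over $\CC$) lets you extract from (b)(ii), coefficient by coefficient in $S^d(V^g)\otimes W_g$, exactly which summands of $S^d(V^g)\otimes\bigwedge^2V^*$ can support $\kappa_g$; this matches the Farinati/Ginzburg--Kaledin description in (c), including the vanishing of $H^{2,d}_g$ for $\codim(V^g)\geq 3$ via the negative exterior power. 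The benefit of your route is a citation-free proof uniform in $d$; the cost is the explicit case analysis. One small tightening: in the case $\codim(V^g)\geq 3$, a single triple of independent vectors from $W_g$ only forces $\kappa_g$ to vanish on pairs drawn from that triple, so to conclude $\kappa_g\equiv 0$ you must also run the mixed triples (two vectors from $V^g$ and one from $W_g$, and one from $V^g$ and two from $W_g$) exactly as in your $k=1,2$ analysis; those arguments carry over verbatim, so this is a matter of wording rather than substance.
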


\begin{remark}\label{remark:codim}
	Part (c) of Lemma~\ref{ineedaname} illuminates some hidden implications of parts (a) and (b).  For instance, $\kappa$ can only be supported on elements $g$
	with $\codim V^g\in\{0,2\}$, which is readily seen from part (c) by noting that
	negative exterior powers are zero and that an element $g$ with codimension one
	acts nontrivially on $H_g^{2,d}$.  
\end{remark}

In practice, one may simplify the computation of $(H^{\mol})^G$
by computing centralizer invariants for a set of conjugacy class representatives and then expanding into $G$-invariants.
Formally, $(H^{\mol})^G\cong\bigoplus_{g\in\mathscr{C}}(H_g^{\mol})^{Z(g)}$, where
$\mathscr{C}$ is a set of conjugacy class representatives, and $Z(g)$ is the
centralizer of $g$.
We review the explicit passage from a $Z(g)$-invariant to a $G$-invariant, which will be especially
relevant in translating the results of Lemma~\ref{le:tricandidate} into the maps in
Definition~\ref{def:kappatri}.
Recall that a cochain $\alpha=\sum_{g\in G}\alpha_g$ 
is $G$-invariant if and only if
$\actby{h}\alpha_g=\alpha_{hgh^{-1}}$ for all $g,h\in G$. Thus, if
$\alpha$ is a $G$-invariant cochain, then
$\alpha_g$ is $Z(g)$-invariant for each $g$, and $\alpha$ is determined by
its components for a set of conjugacy class representatives.
In particular, a centralizer invariant $\alpha_g$ extends uniquely to
a $G$-invariant element, supported on the conjugacy class of $g$, via the map
$\alpha_g\mapsto\sum_{h\in[G/Z(g)]}\actby{h}\alpha_g$, where $[G/Z(g)]$ is a set of left coset representatives.
Further, we have the following commutative diagram:
\begin{center}
	\begin{tikzpicture}[>=latex,scale=2]
	\draw [->] (-2,.75) -- (-2,.25);     
	\draw [->] (2,.75) -- (2,.25);     
	\draw [->] (-.75,0) -- (.25,0);   
	\draw [->] (-.75,1) -- (.25,1);   
	\draw[color=black] (-2,1) node { $\alpha_g\in(H_g^{2,d})^{Z(g)}$};
	\draw[color=black] (-2,0) node { $\kappa_g:\bigwedge^2 V\to S^d(V^g)$};
	\draw[color=black] (2,1) node { $\alpha\in (H^{2,d})^G$};
	\draw[color=black] (2,0) node { $\kappa:\bigwedge^2 V\to\ds\bigoplus_{h\in [G/Z(g)]}S^d(V^{hgh^{-1}})hgh^{-1}$.};
	\end{tikzpicture}
\end{center}
The vertical arrows are via the isomorphism
$S^d(V^g)g\otimes\bigwedge^2V^*\cong\Hom(\bigwedge^2 V,S^d(V^g)g)$.
The horizontal arrows are via the orbit-sum maps 
$$\alpha_g\mapsto \sum_{h\in[G/Z(g)]}\actby{h}\alpha_g
\qquad\text{and}\qquad
\kappa_g\mapsto\sum_{h\in[G/Z(g)]}\actby{h}\kappa_g\actby{h}g.$$

\subsection*{Hochschild cohomology for symmetric groups}

We now turn to the specific example of cohomology of skew group algebras
of symmetric groups with the natural permutation representation.
Much of the Hochschild cohomology of $S(V)\# S_n$ 
may be extracted as subcases of Hochschild cohomology for
skew group algebras of complex reflection groups $G(r,p,n)$
found in~\cite{SheplerWitherspoon08}. 
However, we provide computations for $S_n\cong G(1,1,n)$
here for purposes of self-containment and notational consistency.

For the remainder of the section, let $e_1,\ldots,e_n$ be the 
standard basis of $V\cong\CC^n$, and let the symmetric group $S_n$ act on $V$ by its natural permutation representation.  Thus for $\sigma$ in $S_n$, we have $\actby{\sigma}e_i=e_{\sigma(i)}$.

We first show that for the symmetric
group acting by its natural permutation representation, elements of
Hochschild $2$-cohomology with polynomial degree zero or one can only be
supported on the identity or $3$-cycles.

\begin{lemma}\label{le:support} 
	Let $S_n$ ($n\geq3$) act on $V\cong\CC^n$ by its natural permutation representation,
	and let $\alpha=\sum_{g\in S_n}\alpha_g$ be an element of $(H^{2,1}\oplus H^{2,0})^{S_n}$.
	If $g$ is not the identity and not a $3$-cycle, then $\alpha_g=0$.
\end{lemma}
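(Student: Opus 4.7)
The plan is to reduce to a short list of conjugacy classes via the structural constraints on $H^{2,d}$, and then to eliminate the unwanted classes one at a time by producing a centralizer element that acts nontrivially on the relevant component.

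First I would invoke Remark~\ref{remark:codim}: since $(H^{2,d})^G\cong\bigoplus_{g\in\mathscr{C}}(H^{2,d}_g)^{Z(g)}$, any $g$ that supports a nonzero invariant $\alpha_g$ must have $\codim V^g\in\{0,2\}$ (codimensions larger than $2$ make the exterior factor $\bigwedge^{2-\codim(V^g)}(V^g)^*$ formally zero, and for codimension one, $g$ itself is a reflection and acts by $-1$ on $\bigwedge^1((V^g)^*)^{\perp}$, killing the $Z(g)$-invariants). For the natural permutation action of $S_n$, we have $\dim V^g=$ number of cycles of $g$, so $\codim V^g=0$ forces $g=e$, while $\codim V^g=2$ allows only two cycle types: $3$-cycles and products of two disjoint transpositions. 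Thus the lemma reduces to ruling out double transpositions.

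The heart of the argument is then the double-transposition case. Fix $g=(i\,j)(k\,l)$. The fixed space is
\[
V^g=\Span\{e_m:m\neq i,j,k,l\}\oplus\Span\{e_i+e_j,\,e_k+e_l\},
\]
and $((V^g)^*)^{\perp}$ is identified with the span of the functionals dual to $e_i-e_j$ and $e_k-e_l$. Since $\codim V^g=2$, the exterior factor in $H^{2,d}_g$ collapses to $\bigwedge^{0}(V^g)^*\otimes\bigwedge^{2}((V^g)^*)^{\perp}$, a one-dimensional line, and $g$ itself acts on this line by $(-1)(-1)=+1$, so $g$ alone does not rule out invariants. I would then take the partial transposition $h=(i\,j)\in Z(g)$: it fixes $e_m$ for $m\neq i,j$, fixes $e_i+e_j$, and fixes $e_k+e_l$, so it acts trivially on $V^g$ (hence on $S^d(V^g)$ for $d\in\{0,1\}$), but it sends $e_i-e_j\mapsto-(e_i-e_j)$ while fixing $e_k-e_l$, so it acts by $-1$ on the top exterior factor. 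Therefore $h$ acts by $-1$ on all of $H^{2,d}_g$, forcing $(H^{2,d}_g)^{Z(g)}=0$.

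Combining the two steps, the only $g\in S_n$ for which $(H^{2,d}_g)^{Z(g)}$ can be nonzero (with $d\in\{0,1\}$) are the identity and the $3$-cycles, which is exactly the claim. The main subtlety is simply being careful about the sign bookkeeping in the top exterior power for the double transposition; once the correct centralizer element $h=(i\,j)$ is chosen, the sign computation is immediate and no further calculation is needed.
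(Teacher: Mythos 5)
Your proof is correct and follows essentially the same route as the paper: reduce via the codimension constraint to the identity, $3$-cycles, and double transpositions, then kill the double-transposition component by observing that the centralizing transposition $(i\,j)$ fixes $V^g$ pointwise but acts by $-1$ on $\bigwedge^2\bigl((V^g)^*\bigr)^{\perp}$. The only cosmetic difference is your choice of basis $e_i-e_j$, $e_k-e_l$ for the non-fixed part, versus the paper's $g$-eigenvector basis spanning the same plane.
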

\begin{proof}
	Let $\alpha=\sum_{g\in S_n}\alpha_g$ be an element of the cohomology space $(H^{2,1}\oplus H^{2,0})^{S_n}$.
	If $\codim(V^g)\not\in\{0,2\}$, then $\alpha_g=0$ by Remark~\ref{remark:codim}.  Under the permutation representation, the
	only elements with $\codim V^g\in\{0,2\}$ are the identity, the $3$-cycles, and the double-transpositions, so it remains to show that $\alpha_g= 0$ if $g$ is a double-transposition.  
	In fact, since $\alpha$ is determined by
	its components for a set of conjugacy class representatives, it
    suffices to show $(H_g^{2,1}\oplus H_g^{2,0})^{Z(g)}=0$ for $g=(12)(34)$.
	The vectors 
	\begin{eqnarray*}
		v_1 & = & e_1 - e_2 - e_3 + e_4, \\
		v_2 & = & e_1 - e_2 + e_3 - e_4, \\
		v_3 & = & e_1 + e_2 - e_3 - e_4, \\
		v_4 & = & e_1 + e_2 + e_3 + e_4, \\
		v_k & = & e_k \qquad\qquad\qquad\qquad\text{ for $5\leq k\leq n$}
	\end{eqnarray*} 
	form a $g$-eigenvector basis of $V$ with
	$(V^{g})^{\perp}=\Span\{v_1,v_2\}$ and $V^{g}=\Span\{v_3,\ldots,v_{n}\}$.
	The wedge product $v_1^*\wedge v_2^*$ is a scalar multiple of the
	volume form
	$$
	\vol{g}:=e_1^*\wedge e_3^*+e_3^*\wedge e_2^*+e_2^*\wedge e_4^*+e_4^*\wedge e_1^*,
	$$
	so $\vol{g}$ is a basis for $\bigwedge^2((V^g)^{\perp})^*$.
	The transposition $(12)$ commutes with $g$ but scales elements of
	$H_g^{2,1}\oplus H_g^{2,0}$ by negative one, so $(H_g^{2,1}\oplus H_g^{2,0})^{Z(g)}=0$.
\end{proof}

The cohomology elements in the next lemma give rise to 
the parameter maps of the Lie orbifold algebras exhibited in Theorem~\ref{Lieorbifold}.

\begin{lemma}\label{Lieorbifoldparametermaps}
	Let $G=S_n$ ($n\geq3$) act on $V\cong\CC^n$ by its natural permutation representation.
	The subspace of $(H^{2,1}\oplus H^{2,0})^{S_n}$ consisting of elements supported only
	on the identity is one-dimensional with basis $\sum_{1\leq i<j\leq n}(e_i-e_j)\otimes e_i^*\wedge e_j^*$. 
\end{lemma}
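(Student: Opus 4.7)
The plan is to unpack the description of $H_1^{2,d}$ from Lemma~\ref{ineedaname}(c) for the identity element, where $V^g=V$ and $\codim V^g=0$, so that $H_1^{2,d}\cong S^d(V)\otimes\bigwedge^2 V^*$. I will treat the $d=0$ and $d=1$ summands separately and show that only $d=1$ contributes, with a one-dimensional space of invariants.

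For $d=0$, I view $H_1^{2,0}\cong\bigwedge^2 V^*$ as alternating bilinear forms on $V$. Writing a general element as $\omega=\sum_{i<j}c_{ij}\,e_i^*\wedge e_j^*$, I apply the transposition $\tau=(ij)$. Since $\tau\cdot(e_i^*\wedge e_j^*)=e_j^*\wedge e_i^*=-e_i^*\wedge e_j^*$, invariance forces $c_{ij}=0$ for every $i<j$. Hence $(H_1^{2,0})^{S_n}=0$ and any identity-supported element lives entirely in $H_1^{2,1}$.

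For $d=1$, I identify $H_1^{2,1}\cong V\otimes\bigwedge^2 V^*$ with alternating $S_n$-equivariant linear maps $\kappa:V\times V\to V$, so that $\actby{\sigma}\kappa(e_i,e_j)=\kappa(e_{\sigma(i)},e_{\sigma(j)})$. Such a $\kappa$ is determined by $\kappa(e_1,e_2)$, which must be fixed by the pointwise stabilizer of the ordered pair $(1,2)$, namely the symmetric group on $\{3,\ldots,n\}$. The fixed subspace of this stabilizer in $V$ is $\Span\{e_1,e_2,\,e_3+\cdots+e_n\}$, so I may write $\kappa(e_1,e_2)=ae_1+be_2+c\sum_{k\geq 3}e_k$ for scalars $a,b,c\in\CC$.

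The remaining constraint is the interplay between equivariance under the transposition $(12)$ and the alternating property, which I expect to be the only real computation. Applying $(12)$ gives $\actby{(12)}\kappa(e_1,e_2)=be_1+ae_2+c\sum_{k\geq 3}e_k$, while $\kappa(e_2,e_1)=-\kappa(e_1,e_2)=-ae_1-be_2-c\sum_{k\geq 3}e_k$. Comparing coefficients forces $b=-a$ and $c=0$, leaving $\kappa(e_1,e_2)=a(e_1-e_2)$, and equivariance then yields $\kappa(e_i,e_j)=a(e_i-e_j)$ for every pair. Translating back through the identification $V\otimes\bigwedge^2V^*\cong\Hom(\bigwedge^2 V,V)$ shows that the invariant subspace is spanned by $\sum_{1\leq i<j\leq n}(e_i-e_j)\otimes e_i^*\wedge e_j^*$, completing the proof. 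The main (minor) obstacle is making sure the alternating property and the $(12)$-equivariance are imposed simultaneously — omitting either would give a larger-than-one-dimensional answer.
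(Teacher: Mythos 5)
Your proof is correct, but it takes a genuinely different route from the paper. The paper invokes Solomon's Theorem for the (self-dual) reflection representation of $S_n$: the invariant differential forms $\alpha_k=\sum_i e_i^k\otimes e_i^*$ freely generate $\bigl(S(V)\otimes\bigwedge V^*\bigr)^{S_n}$ as an exterior algebra over $S(V)^{S_n}$, so the degree-$\leq 1$ part of $\bigl(S(V)\otimes\bigwedge^2V^*\bigr)^{S_n}$ is spanned by $\alpha_1\alpha_0=\sum_{i<j}(e_i-e_j)\otimes e_i^*\wedge e_j^*$; this packages the vanishing in polynomial degree $0$ and the one-dimensionality in degree $1$ into a single structural statement and would also yield the higher-degree answer for free. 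You instead argue elementarily: you kill the $d=0$ summand with a single transposition, and for $d=1$ you view an invariant as an equivariant alternating bilinear map determined by $\kappa(e_1,e_2)$, cut down to $\Span\{e_1,e_2,e_3+\cdots+e_n\}$ by the stabilizer $\Sym_{\{3,\ldots,n\}}$, and then impose the $(12)$-equivariance together with antisymmetry to force $\kappa(e_1,e_2)=a(e_1-e_2)$. This is self-contained, requires no invariant theory of reflection groups, and generalizes to representations that are not reflection representations; its only cost is that you should close the loop by observing that the candidate $\sum_{i<j}(e_i-e_j)\otimes e_i^*\wedge e_j^*$ really is $S_n$-invariant (your constraints are a priori only necessary), which is a routine one-line check since $(e_j-e_i)\otimes e_j^*\wedge e_i^*=(e_i-e_j)\otimes e_i^*\wedge e_j^*$.
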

\begin{proof}
	Since the permutation representation of $S_n$ is a self-dual reflection representation, generators of 
	$(H_1^{\mol})^G\cong\left(S(V)\otimes \bigwedge V^*\right)^G\cong\left(S(V)\otimes \bigwedge V\right)^G$ are given by Solomon's Theorem from the
	invariant theory of reflection groups (see~\cite{Solomon}, or the expository~\cite[Chapter 22]{Kane}).
	Specifically, the power sums $f_k=e_1^k+\cdots+e_n^k$ with $1\leq k\leq n$ form a set
	of algebraically independent invariant polynomials, and the differential
	forms 
	$$
	\alpha_k:=\frac{1}{k+1}\sum_{i=1}^n\frac{\partial f_{k+1}}{\partial e_i}\otimes e_i^*=e_1^k\otimes e_1^*+\cdots+e_n^k\otimes e_n^*  \quad\text{for $0\leq k\leq n-1$}
	$$
	generate $\left(S(V)\otimes \bigwedge V^*\right)^G$ as an
	exterior algebra over $S(V)^G\cong\CC[f_1,\ldots,f_n]$.
	Thus, $(S(V)\otimes\bigwedge^2V^*)^G$ is freely generated as an $S(V)^G$-module by $\{\alpha_{k}\alpha_{l}\mid  0\leq k< l\leq n-1\}$.
	Since $\alpha_k\alpha_l$ has polynomial degree $k+l$, 
	every element of $(H_1^{2,1}\oplus H_1^{2,0})^G$ is a scalar multiple of $\alpha_1\alpha_0$.
\end{proof}

The cohomology in the next lemma serves two purposes.  The polynomial degree 
one elements give rise to pre-Drinfeld orbifold algebra maps supported only
on $3$-cycles, while the polynomial degree zero elements are needed
in constructing multiple liftings of a pre-Drinfeld orbifold algebra map.

\begin{lemma}\label{le:tricandidate}
	Let $S_n$ ($n\geq3$) act on $V\cong\CC^n$ by its natural permutation representation.
	The subspace of $(H^{2,1}\oplus H^{2,0})^{S_n}$ consisting of elements supported only on
	$3$-cycles is two-dimensional if $n=3$ and three-dimensional if $n\geq4$.
\end{lemma}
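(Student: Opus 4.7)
The plan is to apply the centralizer decomposition recorded before the lemma, namely $(H^{\mol})^{S_n}\cong\bigoplus_{g\in\mathscr{C}}(H_g^{\mol})^{Z(g)}$, and note that all $3$-cycles in $S_n$ are conjugate, so the $3$-cycle component is captured by a single representative. I would choose $g=(1\,2\,3)$ and compute $(H_g^{2,1}\oplus H_g^{2,0})^{Z(g)}$ directly, then extend to an $S_n$-invariant on the full conjugacy class via the orbit-sum map, which is an isomorphism onto the subspace supported on the class.

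First I would identify $Z(g)=\langle g\rangle\times S_{n-3}$, where the $S_{n-3}$ factor permutes $\{4,\dots,n\}$ (and is trivial when $n=3$). Next I record the fixed-point space $V^g=\Span\{e_1+e_2+e_3,\,e_4,\dots,e_n\}$ together with an eigenbasis $\{v_\omega,v_{\omega^2}\}$ for $(V^g)^\perp$ with eigenvalues the primitive cube roots of unity, for example $v_\omega=e_1+\omega^2 e_2+\omega e_3$. Since $\codim V^g=2$, the definition of $H_g^{p,d}$ collapses to
$$H_g^{2,d}=S^d(V^g)\otimes \bigwedge\nolimits^{2}\bigl((V^g)^*\bigr)^{\perp}\otimes\CC g,$$
and the wedge factor is one-dimensional, spanned by $v_\omega^*\wedge v_{\omega^2}^*$.

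The key preliminary check is that this volume form is $Z(g)$-invariant: $g$ scales it by $\omega^{-1}\omega^{-2}=1$ (the determinant of $g$ on $(V^g)^\perp$), and $S_{n-3}$ acts trivially on $(V^g)^\perp$. Therefore taking $Z(g)$-invariants reduces to computing $(S^d(V^g))^{Z(g)}$ for $d=0,1$. For $d=0$ the space is $\CC$. For $d=1$, the cyclic generator $g$ already acts as the identity on $V^g$, so I only need $S_{n-3}$-invariants of $V^g$: when $n=3$ this is one-dimensional, spanned by $e_1+e_2+e_3$; when $n\geq 4$ this is two-dimensional, spanned by $e_1+e_2+e_3$ and $e_4+\cdots+e_n$.

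Adding up gives $\dim(H_g^{2,0}\oplus H_g^{2,1})^{Z(g)}=2$ for $n=3$ and $3$ for $n\geq 4$, and the orbit-sum map transports this to the analogous statement for the $S_n$-invariant subspace supported on $3$-cycles. The main obstacle is less the dimension count than the bookkeeping with the tensor structure of $H_g^{2,d}$ and verifying invariance of the volume form under the cyclic factor of $Z(g)$; once those are in place, the rest of the argument is a straightforward invariant-theory computation for the trivial action of $\langle g\rangle$ on $V^g$ combined with the standard $S_{n-3}$-invariants in the permutation representation on $\{4,\dots,n\}$.
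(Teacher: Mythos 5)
Your proposal is correct and follows essentially the same route as the paper: reduce to the centralizer invariants for the representative $g=(123)$, observe that the one-dimensional wedge factor $\bigwedge^2\bigl((V^g)^\perp\bigr)^*$ is fixed by $Z(g)=\langle g\rangle\times\Sym_{\{4,\dots,n\}}$, and count $\dim\bigl((V^g)^{Z(g)}\oplus\CC\bigr)$, which is $2$ for $n=3$ and $3$ for $n\geq4$. The only cosmetic difference is that the paper also writes down the explicit spanning elements $\alpha_{(123)}$, $\beta_{(123)}$, $\gamma_{(123)}$ because they are reused later in Definition~\ref{def:kappatri}.
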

\begin{proof}
	Recall that a cohomology element is determined by its components for a set of conjugacy class representatives.
	Thus, if $\alpha$ is supported only on $3$-cycles, it suffices to choose a representative, say $g=(123)$,
	and find a basis of
		\[
		(H_g^{2,1}\oplus H_g^{2,0})^{Z(g)}
		\cong((V^g\oplus\CC)\otimes\textstyle\bigwedge^{2}\bigl((V^g)^{\perp}\bigr)^*\otimes\CC g)^{Z(g)}.
		\]
	Let $\omega=e^{2\pi i/3}$.  Then the vectors
	\begin{eqnarray*}
		v_1 & = & e_1 + \omega^2 e_2 + \omega e_3, \\
		v_2 & = & e_1 + \omega e_2 + \omega^2 e_3, \\
		v_3 & = & e_1 + e_2 + e_3, \\
		v_k & = & e_k \qquad\qquad\qquad\text{ for $4\leq k\leq n$} 
	\end{eqnarray*} 
	form a $g$-eigenvector basis of $V$ with $(V^{g})^{\perp}=\Span\{v_1,v_2\}$ and $V^{g}=\Span\{v_3,\ldots,v_{n}\}$.
	The wedge product $v_1^*\wedge v_2^*$ is a scalar multiple of 
	the volume form
	$$
	\vol{g}:=e_1^*\wedge e_2^*+e_2^*\wedge e_3^*+e_3^*\wedge e_1^*,
	$$
	so $\vol{g}$ is a basis for $\bigwedge^2 ((V^g)^{\perp})^*$. Each element of the centralizer
	$$
	Z(123)=\langle(123)\rangle\times\Sym_{\{4,\ldots,n\}}
	$$
	acts trivially on $\vol{g}$, so
	$$
	(H_g^{2,1}\oplus H_g^{2,0})^{Z(g)}
	\cong((V^g)^{Z(g)}\oplus\CC)\otimes\textstyle\bigwedge^{2}\bigl((V^g)^{\perp}\bigr)^*\otimes\CC g.
	$$
	The vectors $e_1+e_2+e_3$ and $e_4+\cdots+e_n$ 
	form a basis of $(V^g)^{Z(g)}$, so the cohomology elements
	$$
	\alpha_{(123)}=(e_1+e_2+e_3)\otimes\vol{(123)}\otimes(123)\quad\text{and}\quad
	\beta_{(123)}=(e_4+\cdots+e_n)\otimes\vol{(123)}\otimes(123)
	$$
	span $(H^{2,1}_{(123)})^{Z(g)}$, while $\gamma_{(123)}=\vol{g}\otimes (123)$ spans $(H_{(123)}^{2,0})^{Z(g)}$.  
\end{proof}

The following definition arises from expanding the centralizer invariants determined in the proof of Lemma~\ref{le:tricandidate} into $S_n$-invariants and applying the isomorphism $S^d(V^g)g\otimes\bigwedge^2V^*\cong\Hom(\bigwedge^2 V,S^d(V^g)g)$.
(See the discussion following Remark~\ref{remark:codim}.)

\begin{definition}\label{def:kappatri}
	For parameters $a,b\in \CC$, let 
	\(\kappa^L_{\tri}=\sum_{(ijk)\in S_n}\kappa^L_{(ijk)}\otimes(ijk)\) be the linear $2$-cochain
	with component maps $\kappa^L_{(ijk)}:\bigwedge^2 V\to V^{(ijk)}$ defined by
	$$
	\kappa^L_{(ijk)}(e_i,e_j)=\kappa^L_{(ijk)}(e_j,e_k)=\kappa^L_{(ijk)}(e_k,e_i)=a(e_i+e_j+e_k)+b\sum_{l\neq i,j,k}e_l
	$$
	and $\kappa^L_{\tri}(e_l,e_m)=0$ if $\{e_l,e_m\}\cap V^{(ijk)}\neq\varnothing$.
	
	For a parameter $c\in\CC$, let $\kappa^C_{\tri}=\sum_{(ijk)\in S_n}\kappa^C_{(ijk)}\otimes(ijk)$ be the constant cochain
	with component maps $\kappa^C_{(ijk)}:\bigwedge^2 V\to\CC$ defined by
	$$
	\kappa^C_{(ijk)}(e_i,e_j)=\kappa^C_{(ijk)}(e_j,e_k)=\kappa^C_{(ijk)}(e_k,e_i)=c
	$$
	and $\kappa^C_{\tri}(e_l,e_m)=0$ if $\{e_l,e_m\}\cap V^{(ijk)}\neq\varnothing$.
	
	Also let $\kappa_{\tri}=\kappa^L_{\tri}+\kappa^C_{\tri}$.
\end{definition}

Notice that if $a$ and $b$ are not both zero, then $\ker\kappa^L_{(ijk)}=V^{(ijk)}$,
and if $c\neq0$, then $\ker\kappa^C_{(ijk)}=V^{(ijk)}$.

In view of the equivalences in Lemma~\ref{ineedaname}, the polynomial degree one elements of Hochschild $2$-cohomology computed in Lemmas~\ref{le:support}, \ref{Lieorbifoldparametermaps}, and~\ref{le:tricandidate} yield
a description of all pre-Drinfeld orbifold algebra maps.

\begin{corollary}\label{cor:summarypreDOA}
	The pre-Drinfeld orbifold algebra maps for $S_n$ ($n\geq3$) acting by its natural permutation representation are the linear $2$-cochains $\kappa^L=\kappa_1^L+\kappa^L_{\tri}$, with $\kappa^L_{\tri}$ as in
	Definition~\ref{def:kappatri} and $\kappa_1^L(e_i,e_j)=a_1(e_i-e_j)$ for some $a_1\in\CC$.
\end{corollary}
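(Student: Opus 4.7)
The plan is to assemble this corollary directly from the four lemmas that precede it, since together they describe all the relevant cohomology and the translation dictionary between cohomology classes and cochain maps. Throughout, I will use the bijection of Lemma~\ref{ineedaname} (in the case $d=1$), which identifies pre-Drinfeld orbifold algebra maps with elements of $(H^{2,1})^{S_n}$: Properties~(\ref{def:fourconditions-0}) and~(\ref{def:fourconditions-ii}) and $G$-invariance are exactly the three equivalent conditions appearing in that lemma.

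First, I would apply Lemma~\ref{le:support} to decompose any $\alpha \in (H^{2,1})^{S_n}$ as $\alpha = \alpha_1 + \alpha_{\tri}$, where $\alpha_1$ is supported on the identity and $\alpha_{\tri}$ is supported on the union of conjugacy classes of $3$-cycles. Correspondingly, every pre-Drinfeld orbifold algebra map splits as $\kappa^L = \kappa_1^L + \kappa_{\tri}^L$ with the asserted supports, so it suffices to characterize each piece separately.

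For the identity component, I would invoke Lemma~\ref{Lieorbifoldparametermaps}, which identifies the identity-supported subspace of $(H^{2,1})^{S_n}$ as one-dimensional with basis $\sum_{1\leq i<j\leq n}(e_i-e_j)\otimes e_i^*\wedge e_j^*$. Under the identification $V\otimes \bigwedge^2 V^* \cong \Hom(\bigwedge^2 V, V)$, this basis element corresponds to the cochain sending $e_i\wedge e_j \mapsto e_i - e_j$; thus $\kappa_1^L(e_i,e_j) = a_1(e_i - e_j)$ for a unique scalar $a_1 \in \CC$. For the $3$-cycle component, I would use Lemma~\ref{le:tricandidate}, which shows that $\alpha_{\tri}$ is determined by its value at a representative $3$-cycle $g = (123)$, and that the centralizer-invariant space $(H_g^{2,1})^{Z(g)}$ is spanned by $\alpha_{(123)}$ and (when $n\geq 4$) $\beta_{(123)}$. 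Expanding these centralizer invariants into $S_n$-invariants via the orbit-sum map from the commutative diagram following Remark~\ref{remark:codim}, and translating back through the isomorphism $S^1(V^g)g \otimes \bigwedge^2 V^* \cong \Hom(\bigwedge^2 V, V^g g)$, I would then check that the resulting $S_n$-invariant cochain is precisely $\kappa_{\tri}^L$ of Definition~\ref{def:kappatri}.

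The step that requires genuine care is the final translation from cohomology representatives back to the explicit piecewise formula in Definition~\ref{def:kappatri}. Two things have to be verified by hand: that applying the orbit-sum map to $\alpha_{(123)}$ and $\beta_{(123)}$ produces the coefficients $a(e_i+e_j+e_k) + b\sum_{l\neq i,j,k}e_l$ at the $3$-cycle $(ijk)$, and that the vanishing $\kappa^L_{(ijk)}(e_l,e_m) = 0$ whenever $\{e_l,e_m\} \cap V^{(ijk)} \neq \varnothing$ holds automatically. The latter follows from the factor $\bigwedge^{\codim V^g}\bigl((V^g)^*\bigr)^\perp$ in the definition of $H^{2,1}_g$, which forces the cochain component to annihilate wedges meeting $V^g$, since $\vol{(ijk)}$ pairs to zero with $e_l \wedge e_m$ unless both $e_l$ and $e_m$ lie in $(V^g)^\perp$. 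Everything else is bookkeeping, and the parameter counts (one parameter for the identity, two for $3$-cycles when $n \geq 4$ and one when $n=3$) match the dimensions supplied by Lemmas~\ref{Lieorbifoldparametermaps} and~\ref{le:tricandidate}.
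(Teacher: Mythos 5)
Your proposal is correct and follows essentially the same route the paper takes: the paper presents this corollary as an immediate consequence of Lemma~\ref{ineedaname} (with $d=1$) combined with the cohomology computations in Lemmas~\ref{le:support}, \ref{Lieorbifoldparametermaps}, and~\ref{le:tricandidate}, with the translation to the explicit formulas carried out in the discussion surrounding Definition~\ref{def:kappatri}. Your write-up simply makes explicit the orbit-sum expansion and the identification $S^1(V^g)g\otimes\bigwedge^2V^*\cong\Hom(\bigwedge^2V,V^gg)$ that the paper leaves to the reader, including the correct observation that the factor $\bigwedge^{2}\bigl((V^g)^*\bigr)^{\perp}$ forces the vanishing conditions in Definition~\ref{def:kappatri}.
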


In Theorems~\ref{thm:LieOrbifoldAlgebraMaps} and~\ref{thm:DrinfeldOrbifoldAlgebraMaps}, we will show that the maps
$\kappa_1^L$ and $\kappa^L_{\tri}$ lift (separately, but not in combination) to Drinfeld orbifold algebra maps.  Any two liftings of a particular pre-Drinfeld orbifold algebra map must differ by a constant $2$-cochain that satisfies the mixed Jacobi identity. Recalling Lemma~\ref{ineedaname}, the desired constant $2$-cochains are revealed by the polynomial degree zero elements of Hochschild $2$-cohomology computed in Lemmas~\ref{le:support}, \ref{Lieorbifoldparametermaps}, and~\ref{le:tricandidate}.

\begin{corollary}
\label{constantcocycles}
	The $S_n$-invariant constant $2$-cochains satisfying the
	mixed Jacobi identity are the maps $\kappa_{\tri}^C$ as in Definition~\ref{def:kappatri}.
\end{corollary}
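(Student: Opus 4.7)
The plan is to reduce the corollary to a dimension count in cohomology by applying Lemma~\ref{ineedaname} with $d=0$. Under that equivalence, $S_n$-invariant constant $2$-cochains $\kappa^C$ satisfying the mixed Jacobi identity correspond bijectively to elements of the cohomology space $(H^{2,0})^{S_n}$ via the isomorphism $S^0(V^g)g\otimes \bigwedge^2 V^*\cong\Hom(\bigwedge^2 V,\CC g)$ together with the orbit-sum extension from centralizer invariants to $S_n$-invariants recorded in the diagram following Remark~\ref{remark:codim}. So it suffices to determine $(H^{2,0})^{S_n}$ explicitly and then match the result with the family $\kappa^C_{\tri}$.

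Next, I would invoke Lemma~\ref{le:support} to restrict the support of any element of $(H^{2,0})^{S_n}$ to the identity and the $3$-cycles. For the identity component, a re-reading of the proof of Lemma~\ref{Lieorbifoldparametermaps} shows that $(S(V)\otimes \bigwedge^2 V^*)^{S_n}$ is freely generated as a module over $S(V)^{S_n}$ by $\{\alpha_k\alpha_l\mid 0\leq k<l\leq n-1\}$, with each $\alpha_k\alpha_l$ of polynomial degree $k+l\geq 1$; consequently $(H_1^{2,0})^{S_n}=0$. For the $3$-cycle component, the proof of Lemma~\ref{le:tricandidate} already identifies $(H_{(123)}^{2,0})^{Z((123))}$ as one-dimensional, spanned by $\gamma_{(123)}=\vol{(123)}\otimes (123)$. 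Together, $(H^{2,0})^{S_n}$ is one-dimensional and is supported only on $3$-cycles.

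Finally, I would verify that this one-dimensional subspace is parametrized precisely by the maps $\kappa^C_{\tri}$ of Definition~\ref{def:kappatri}. Under the isomorphism $\CC g\otimes \bigwedge^2 V^*\cong\Hom(\bigwedge^2 V,\CC g)$, the volume form $\vol{(123)}=e_1^*\wedge e_2^*+e_2^*\wedge e_3^*+e_3^*\wedge e_1^*$ corresponds to the $Z((123))$-invariant map that sends $e_1\wedge e_2,\, e_2\wedge e_3,\, e_3\wedge e_1$ to $1$ and all other basis wedges to $0$. Applying the orbit-sum extension over $[S_n/Z((123))]$ and scaling by $c\in\CC$ then recovers exactly the cochain $\kappa^C_{\tri}$, whose component $\kappa^C_{(ijk)}$ vanishes whenever one input lies in $V^{(ijk)}$ and satisfies $\kappa^C_{(ijk)}(e_i,e_j)=\kappa^C_{(ijk)}(e_j,e_k)=\kappa^C_{(ijk)}(e_k,e_i)=c$.

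The main obstacle is purely bookkeeping: translating the cohomological representative $\gamma_{(123)}$ through the commutative diagram into the explicit values on basis pairs and checking that the orbit sum assembles correctly into Definition~\ref{def:kappatri}. Both the $S_n$-invariance and the mixed Jacobi identity for the resulting family come for free from the cohomological classification via Lemma~\ref{ineedaname}, so no direct verification of either property is required beyond this identification.
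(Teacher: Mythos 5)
Your proposal is correct and follows essentially the same route the paper intends: the corollary is derived by applying Lemma~\ref{ineedaname} with $d=0$ (the image condition being automatic for constant cochains), restricting support via Lemma~\ref{le:support}, reading off that the degree-zero identity component vanishes from Lemma~\ref{Lieorbifoldparametermaps} and that the $3$-cycle component is spanned by $\gamma_{(123)}=\vol{(123)}\otimes(123)$ from Lemma~\ref{le:tricandidate}, and then translating through the orbit-sum diagram into Definition~\ref{def:kappatri}. Your bookkeeping of the volume form and the resulting values $\kappa^C_{(ijk)}(e_i,e_j)=\kappa^C_{(ijk)}(e_j,e_k)=\kappa^C_{(ijk)}(e_k,e_i)=c$ matches the paper exactly.
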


\section{Notation and reduction techniques}\label{sec:reduction}
In this section, we gather notation and reduction techniques to facilitate the process of lifting pre-Drinfeld orbifold algebra maps $\kappa^L$ to Drinfeld orbifold algebra maps $\kappa=\kappa^L+\kappa^C$.
We first introduce operators on cochains to make it easier to refer to the properties of 
a Drinfeld orbifold algebra map (Definition~\ref{def:fourconditions})
for a group $G$ acting linearly on a vector space $V\cong\CC^n$.
Along the way, we indicate how our notation relates to the cohomological 
interpretation (see~\cite[Theorem~7.2]{SWorbifold}) of each property.
We then record symmetries that reduce the computations involved in clearing
 obstructions.  We use these symmetries heavily in Section~\ref{sec:computations}.

\subsection*{A variation on the coboundary} First, we define a map $\psi$ to compactly describe the left-hand side of Property (\ref{def:fourconditions-ii}) and the right-hand side of Property (\ref{def:fourconditions-iii}) of a Drinfeld orbifold algebra map.
For a linear or constant $2$-cochain $\alpha$,
let $\psi(\alpha)=\sum_{g\in G}\psi_gg$ be the $3$-cochain with components $\psi_g:\bigwedge^3V\rightarrow S(V)$ given by
$$
\psi_g(v_1,v_2,v_3)=\alpha_g(v_1,v_2)(\actby{g}v_3-v_3)
+\alpha_g(v_2,v_3)(\actby{g}v_1-v_1)
+\alpha_g(v_3,v_1)(\actby{g}v_2-v_2).
$$
The map $\psi$ is the negation of the coboundary operator on cochains arising from the Koszul resolution.
In particular, $\psi(\kappa^L)=-d_3^*\kappa^L$ and $\psi(\kappa^C)=-d_3^*\kappa^C$, where
$d_3^*$ is the coboundary operator that takes two-cochains to three-cochains (see the proof of~\cite[Lemma~7.1]{SWorbifold}). 

\subsection*{A variation on the cochain bracket}
Next, we define a map $\phi$ to compactly describe
the left-hand sides of Properties (\ref{def:fourconditions-iii}) and (\ref{def:fourconditions-iv}) of a Drinfeld orbifold algebra map. 
For $\alpha$ a linear or constant $2$-cochain and
$\beta$ a linear $2$-cochain,
let $\phi(\alpha,\beta)=\sum_{g\in G}\phi_g g$ be the $3$-cochain
with components $\phi_g=\sum_{xy=g}\phi_{x,y}$, where $\phi_{x,y}:\bigwedge^3 V\to V\oplus\CC$ is given by
$$
\phi_{x,y}(v_1,v_2,v_3)=\alpha_x(v_1+\actby{y}v_1,\beta_y(v_2,v_3))
+\alpha_x(v_2+\actby{y}v_2,\beta_y(v_3,v_1))
+\alpha_x(v_3+\actby{y}v_3,\beta_y(v_1,v_2)).
$$
Thus $\phi(\alpha,\beta)$ is $G$-graded, with components $\phi_g$, and also  $(G\times G)$-graded, with components $\phi_{x,y}$.
The map $\phi(\alpha,\beta)$ is closely
related to the cochain bracket $[\alpha,\beta]$ in~\cite[Definition~5.6, Corollary~6.7]{SWorbifold}.
Indeed, $\phi(\kappa^L,\kappa^L)=-\frac{1}{2}[\kappa^L,\kappa^L]$ and $\phi(\kappa^C,\kappa^L)=-[\kappa^C,\kappa^L]$, as explained in~\cite[proof of Lemma~7.1]{SWorbifold}.

To the extent possible, and especially in Section~\ref{sec:computations}, we make remarks or calculations that apply to both
$\phi(\kappa^L,\kappa^L)$ and $\phi(\kappa^C,\kappa^L)$, and in these instances,
 we write $\phi^{*}_g$ and
$\phi^{*}_{x,y}$ for the corresponding components of $\phi(\kappa^{*},\kappa^L)$ where $*$ denotes either $L$ or $C$.

\subsection*{Drinfeld orbifold algebra maps (condensed definition)}
Equipped with the definitions of $\psi$ and $\phi$, the properties of a Drinfeld orbifold map $\kappa=\kappa^L+\kappa^C$
(Definition~\ref{def:fourconditions}) may be expressed succinctly:
\begin{itemize}
	\setlength\itemsep{.5em}
	\item[(\ref{def:fourconditions-0})] $\im\kappa^L_g\subseteq V^g$ for each $g$ in $G$,
	\item[(\ref{def:fourconditions-i})] the map $\kappa$ is $G$-invariant,
	\item[(\ref{def:fourconditions-ii})] $\psi(\kappa^L)=0$,
	\item[(\ref{def:fourconditions-iii})] $\phi(\kappa^L,\kappa^L)=2\psi(\kappa^C)$,
	\item[(\ref{def:fourconditions-iv})] $\phi(\kappa^C,\kappa^L)=0$. 
\end{itemize}

\subsection*{Invariance relations} 
Recall that a cochain $\alpha=\sum_{g\in G}\alpha_gg$ with components $\alpha_g:\bigwedge^k V\to S(V)$ is $G$-invariant if and only if $\actby{h}\alpha_g=\alpha_{hgh^{-1}}$ for all $g,h\in G$.
Equivalently,
$$\actby{h}(\alpha_g(v_1,\ldots,v_k))=\alpha_{hgh^{-1}}(\actby{h}v_1,\ldots,\actby{h}v_k)$$
for all $g,h\in G$ and $v_1,\ldots,v_k\in V$. Thus a $G$-invariant cochain is
determined by its components for a set of conjugacy class representatives. 

In the following lemma, one can let $\alpha=\kappa^L$ or $\alpha=\kappa^C$ and let $\beta=\kappa^L$ to see that if $\kappa^L$ and $\kappa^C$ are $G$-invariant,
then $\phi(\kappa^{*},\kappa^L)$ and $\psi(\kappa^{*})$ are also $G$-invariant. This is helpful because, for instance, if $\phi_g=2\psi_g$ for some $g\in G$, then acting by $h\in G$ on both sides shows $\phi_{hgh^{-1}}=2\psi_{hgh^{-1}}$ also. Thus if $\phi_g=2\psi_g$ for all $g$ in a set of conjugacy class representatives, then $\phi(\kappa^L,\kappa^L)=2\psi(\kappa^C)$. Similar
reasoning applies to Properties (\ref{def:fourconditions-ii}) and
(\ref{def:fourconditions-iv}) of a Drinfeld orbifold algebra map.

\begin{lemma}
	\label{le:orbitphixyphig}
	Let $G$ be a finite group acting linearly on $V\cong\CC^n$.  If $\alpha$ and
	$\beta$ are $G$-invariant $2$-cochains with $\beta$ linear and $\alpha$ linear or constant,
	then $\phi(\alpha,\beta)$ and $\psi(\alpha)$ are $G$-invariant.  Specifically, at
	the component level, we have for all $g,h\in G$ and $v_1,v_2,v_3\in V$
	$$
	\actby{h}(\phi_{x,y}(v_1,v_2,v_3))=\phi_{hxh^{-1},hyh^{-1}}(\actby{h}v_1,\actby{h}v_2,\actby{h}v_3),
	$$
	
	$$
	\actby{h}(\phi_g(v_1,v_2,v_3))=\phi_{hgh^{-1}}(\actby{h}v_1,\actby{h}v_2,\actby{h}v_3),
	$$
	and
	$$
	\actby{h}(\psi_g(v_1,v_2,v_3))=\psi_{hgh^{-1}}(\actby{h}v_1,\actby{h}v_2,\actby{h}v_3).
	$$
\end{lemma}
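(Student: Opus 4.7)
The plan is to verify each of the three component-level identities by direct expansion from the definitions of $\psi$ and $\phi$, using only the defining invariance relations for $\alpha$ and $\beta$ together with the fact that $G$ acts linearly on $V$. The single key ingredient that powers everything is the composition rule $\actby{h}\actby{g} = \actby{hg} = \actby{hgh^{-1}}\actby{h}$, which immediately gives, for any $v \in V$,
\[
\actby{h}(\actby{g}v - v) \;=\; \actby{hgh^{-1}}(\actby{h}v) \;-\; \actby{h}v.
\]
Once the three pointwise identities are in hand, the cochain-level $G$-invariance of $\phi(\alpha,\beta)$ and $\psi(\alpha)$ is immediate, since a cochain $\gamma = \sum_g \gamma_g g$ is $G$-invariant exactly when $\actby{h}\gamma_g = \gamma_{hgh^{-1}}$ for all $g, h$.

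For the $\psi$ identity I apply $\actby{h}$ to each of the three cyclic summands in the definition of $\psi_g(v_1,v_2,v_3)$. The invariance of $\alpha$ rewrites $\actby{h}(\alpha_g(v_1,v_2))$ as $\alpha_{hgh^{-1}}(\actby{h}v_1, \actby{h}v_2)$, and the displayed identity above converts the factor $\actby{g}v_3 - v_3$ into $\actby{hgh^{-1}}(\actby{h}v_3) - \actby{h}v_3$. Multiplication in $S(V)$ is $G$-equivariant, so the three transformed summands reassemble to exactly $\psi_{hgh^{-1}}(\actby{h}v_1,\actby{h}v_2,\actby{h}v_3)$.

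For the $(x,y)$-graded identity I apply $\actby{h}$ to a typical summand $\alpha_x(v_1 + \actby{y}v_1, \beta_y(v_2,v_3))$. Linearity of the $G$-action on $V$ splits $\actby{h}(v_1 + \actby{y}v_1)$ as $\actby{h}v_1 + \actby{hyh^{-1}}(\actby{h}v_1)$; invariance of $\beta$ turns $\actby{h}(\beta_y(v_2,v_3))$ into $\beta_{hyh^{-1}}(\actby{h}v_2,\actby{h}v_3)$; and invariance of $\alpha$ replaces the outer $\alpha_x$ by $\alpha_{hxh^{-1}}$. The two other cyclic terms transform analogously, assembling to $\phi_{hxh^{-1},hyh^{-1}}(\actby{h}v_1,\actby{h}v_2,\actby{h}v_3)$. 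The identity for $\phi_g$ then follows by summing over $xy = g$ and noting that $(x,y) \mapsto (hxh^{-1}, hyh^{-1})$ is a bijection from the set of factorizations of $g$ onto the set of factorizations of $hgh^{-1}$.

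This proof is an unpacking of definitions rather than a result with a serious obstacle; the only thing to watch is the bookkeeping. The main care points are keeping straight where invariance of $\alpha$ versus $\beta$ is invoked (which is why the hypothesis requires both to be $G$-invariant and $\beta$ to be linear, so that $\beta_y(v_2,v_3)$ is a single vector on which $\actby{h}$ can act), and accounting for the re-indexing of the factorization sum via the conjugation bijection. No other input is required.
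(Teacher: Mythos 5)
Your proof is correct and follows essentially the same route as the paper's: both verify the $(x,y)$-component identity by applying $\actby{h}$ to a typical summand and using the invariance of $\alpha$ and $\beta$ together with the relation $\actby{hy}v=\actby{hyh^{-1}}(\actby{h}v)$, then obtain the $\phi_g$ identity from the bijection $(x,y)\mapsto(hxh^{-1},hyh^{-1})$ between factorizations of $g$ and of $hgh^{-1}$, and handle $\psi$ analogously. No gaps.
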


\begin{proof}
	To see that
	$$
	\actby{h}(\phi_{x,y}(v_1,v_2,v_3))=\phi_{hxh^{-1},hyh^{-1}}(\actby{h}v_1,\actby{h}v_2,\actby{h}v_3)
	$$
	for all $x,y,h\in G$ and $v_1,v_2,v_3\in V$,
	note that, using invariance of $\alpha$ and $\beta$,
	\begin{eqnarray*}
		\actby{h}(\alpha_x(v_i+\actby{y}v_i,\beta_y(v_j,v_k))) 
		& = & \alpha_{hxh^{-1}}(\actby{h}v_i+\actby{hy}v_i,\beta_{hyh^{-1}}(\actby{h}v_j,\actby{h}v_k)) \\
		& = &\alpha_{hxh^{-1}}(\actby{h}v_i+\actby{hyh^{-1}}(\actby{h}v_i),\beta_{hyh^{-1}}(\actby{h}v_j,\actby{h}v_k)). 
	\end{eqnarray*}
	Then also
	$$
	\actby{h}\phi_g=\sum_{xy=g}\actby{h}\phi_{x,y}=\sum_{xy=g}\phi_{hxh^{-1},hyh^{-1}}=\phi_{hgh^{-1}},
	$$
    where the last equality holds since the correspondence $(x,y)\leftrightarrow(hxh^{-1},hyh^{-1})$ is a
    bijection between the set of factor pairs of $g$ and
    the set of factor pairs of $hgh^{-1}$.
	
	To see that 
	$\actby{h}(\psi_g(v_1,v_2,v_3))=\psi_{hgh^{-1}}(\actby{h}v_1,\actby{h}v_2,\actby{h}v_3)$,
	again use that 
	$\actby{h}(\alpha_g(v_i,v_j))=\alpha_{hgh^{-1}}(\actby{h}v_i,\actby{h}v_j)$
	and $\actby{hg}v_k=\actby{hgh^{-1}}(\actby{h}v_k)$.
\end{proof}

\subsection*{Orbits of factorizations}
The next observations involve the action of $G$ on $G\times G$
by diagonal (componentwise) conjugation and provide a method for 
narrowing the number of terms and basis triples we must consider
in evaluating $\phi(\kappa^{*},\kappa^L)$ in Section~\ref{sec:computations}.

If expressions $\phi_{x,y}(u,v,w)$ are organized in an array with
rows indexed by factorizations $xy$ of $g$ and columns indexed by basis triples $\{u,v,w\}$, then $\phi_g(u,v,w)$ corresponds to a column sum. Our goal is to use invariance relations to show how to use column sums in a carefully chosen subarray to determine the column sums for the full array.

We first consider the effect of acting on a column sum for a subarray with rows indexed by
factorizations in the same orbit under a subgroup.

\begin{lemma}
	\label{zactionONorbitsum}
	Let $G$ be a finite group acting linearly on $V\cong\CC^n$, and let
	$\alpha$ and $\beta$ be $G$-invariant $2$-cochains with $\beta$ linear
	and $\alpha$ linear or constant. Recall that $\phi(\alpha,\beta)$
	has components $\phi_g=\sum_{xy=g}\phi_{x,y}$.
	Fix $g$ in $G$, and let $H$ be a subgroup of the centralizer $Z(g)$.
	If $g=xy$, then for all $z$ in $Z(g)$ and $u,v,w$ in $V$,
	\begin{eqnarray*}
		\actby{z}\left(\sum_{(x',y')\in\actby{H}(x,y)}\phi_{x',y'}(u,v,w)\right)
		& = &\sum_{(x',y')\in\actby{H}(x,y)}\phi_{\actby{z}x',\actby{z}y'}(\actby{z}u,\actby{z}v,\actby{z}w) \\
		& = &\sum_{(x',y')\in\actby{zH}(x,y)}\phi_{x',y'}(\actby{z}u,\actby{z}v,\actby{z}w).
	\end{eqnarray*}
\end{lemma}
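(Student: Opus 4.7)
The plan is to establish the two claimed equalities separately: the first using the componentwise invariance statement in Lemma~\ref{le:orbitphixyphig}, and the second using a reindexing bijection between the relevant conjugation orbits.

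First, I would apply the first display of Lemma~\ref{le:orbitphixyphig} termwise inside the sum. For every factorization $(x',y')$ appearing in the orbit $\actby{H}(x,y)$, that lemma gives
$$
\actby{z}\bigl(\phi_{x',y'}(u,v,w)\bigr)=\phi_{\actby{z}x',\actby{z}y'}(\actby{z}u,\actby{z}v,\actby{z}w).
$$
Summing this identity over $(x',y')\in\actby{H}(x,y)$ and pulling $\actby{z}$ outside (using linearity of the $G$-action on $S(V)\#G$) produces the first equality. No nontrivial combinatorics enter at this stage.

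For the second equality, I would verify that the map $(x',y')\mapsto(\actby{z}x',\actby{z}y')$ is a bijection from $\actby{H}(x,y)$ onto $\actby{zH}(x,y)$. If $(x',y')=\actby{h}(x,y)=(hxh^{-1},hyh^{-1})$ for some $h\in H$, then
$$
(\actby{z}x',\actby{z}y')=\bigl(zhx(zh)^{-1},\,zhy(zh)^{-1}\bigr)=\actby{zh}(x,y)\in\actby{zH}(x,y),
$$
and conversely every element of $\actby{zH}(x,y)$ is obtained this way. Thus the map is a bijection with inverse given by conjugation by $z^{-1}$, and reindexing the sum on the right of the first equality by $(x'',y'')=(\actby{z}x',\actby{z}y')$ yields the second equality.

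The only point requiring care is the bookkeeping around the coset $zH$: since $z$ need not lie in $H$, the set $zH$ is generally not a subgroup, so $\actby{zH}(x,y)$ is not itself an $H$-orbit but simply the image of $\actby{H}(x,y)$ under conjugation by $z$. I expect this is the one place where a reader could be momentarily confused, so I would state explicitly in the proof that $\actby{zH}(x,y):=\{\actby{s}(x,y):s\in zH\}$. The hypotheses $z\in Z(g)$ and $H\leq Z(g)$ are used precisely to guarantee that each element $zh\in zH$ centralizes $g$, so that $\actby{zh}(x,y)$ remains a factorization of $g$ and every $\phi_{x',y'}$ appearing in either sum is a legitimate summand of $\phi_g$. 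Beyond this observation, the argument is formal.
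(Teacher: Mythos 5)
Your proposal is correct and follows essentially the same route as the paper: the first equality is a termwise application of Lemma~\ref{le:orbitphixyphig}, and the second comes from the bijection $(x',y')\mapsto\actby{z}(x',y')$ between $\actby{H}(x,y)$ and $\actby{zH}(x,y)$. Your additional remarks on the meaning of $\actby{zH}(x,y)$ and the role of the hypotheses $z\in Z(g)$, $H\leq Z(g)$ are accurate clarifications rather than deviations.
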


\begin{proof}
	The first equality is an application of Lemma~\ref{le:orbitphixyphig}, and the second equality holds because 
	the elements of $\actby{H}(x,y)$ and $\actby{zH}(x,y)=\{\actby{z}(x',y')\;|\;(x',y')\in\actby{H}(x,y)\}$ are in bijection via the map
	$(x',y')\mapsto\actby{z}(x',y')$.
\end{proof}

We use the following characterization of when ``coset orbits''
of factorizations coincide to ensure there is no double-counting in the proof of Lemma~\ref{le:orbitreduction}.

\begin{lemma}\label{le:orbitpartition}
	Let $G$ be a finite group, and let $g$ be an element of $G$ with factorization $g=xy$.  
	Let $K=Z(x)\cap Z(y)$, the stabilizer of $(x,y)$ under componentwise
	conjugation, and let $H$ be a subgroup of $Z(g)$ normalized by $K$.
	Then the orbits $\actby{z_1H}(x,y)$ and $\actby{z_2H}(x,y)$ are disjoint or equal, with
	equality if and only if $z_1HK=z_2HK$.
\end{lemma}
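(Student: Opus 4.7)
The plan is to reduce everything to the observation that $K=Z(x)\cap Z(y)$ is precisely the stabilizer of the pair $(x,y)$ under componentwise conjugation in $G$, so the orbit structure of $(x,y)$ under subsets of $Z(g)$ is controlled by cosets of $K$.

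First I would verify the basic setup: note that any element of $K=Z(x)\cap Z(y)$ commutes with $xy=g$, so $K\subseteq Z(g)$, and $H\subseteq Z(g)$ by hypothesis. Since $K$ normalizes $H$, the product $HK$ equals $KH$, and a direct check shows $HK\cdot HK = H(KH)K = H(HK)K = HK$, with $(HK)^{-1}=KH=HK$, so $HK$ is a subgroup of $Z(g)$. This makes ``$z_1HK=z_2HK$'' a well-defined statement about cosets.

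Next I would prove the key observation: for $w_1,w_2\in G$, we have $\actby{w_1}(x,y)=\actby{w_2}(x,y)$ if and only if $w_2^{-1}w_1\in K$, i.e., $w_1K=w_2K$. Using this, I would show $\actby{z_1H}(x,y)\subseteq \actby{z_2H}(x,y)$ if and only if $z_1H\subseteq z_2HK$. Indeed, assuming inclusion of orbits, for each $h_1\in H$ there is $h_2\in H$ with $\actby{z_1h_1}(x,y)=\actby{z_2h_2}(x,y)$, which by the observation gives $z_1h_1\in z_2h_2K\subseteq z_2HK$; the reverse implication is immediate since elements of $K$ fix $(x,y)$.

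From this I would deduce the equality half of the statement: $\actby{z_1H}(x,y)=\actby{z_2H}(x,y)$ iff $z_1H\subseteq z_2HK$ and $z_2H\subseteq z_1HK$, which (using that $HK$ is a subgroup containing $H$) is equivalent to $z_1HK=z_2HK$. For the disjoint-or-equal half, I would suppose $\actby{z_1H}(x,y)\cap \actby{z_2H}(x,y)\neq\varnothing$, picking $h_1,h_2\in H$ with $\actby{z_1h_1}(x,y)=\actby{z_2h_2}(x,y)$; this yields $z_1 h_1\in z_2 h_2 K$, hence $z_1\in z_2 h_2 K h_1^{-1}$. The fact that $K$ normalizes $H$ lets me rewrite $Kh_1^{-1}\subseteq HK$, so $z_1\in z_2 HK$, forcing $z_1HK=z_2HK$ and hence equality of orbits by the previous step.

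The only delicate point I expect is the bookkeeping around $HK$ being a subgroup and $Kh_1^{-1}\subseteq HK$; both rely on the hypothesis that $K$ normalizes $H$ (and neither uses that $H$ normalizes $K$), so I would make sure to flag exactly where that assumption enters. Everything else is a routine unwinding of the stabilizer identification $\mathrm{Stab}_G(x,y)=K$ together with standard coset-manipulation.
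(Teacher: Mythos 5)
Your proof is correct and follows essentially the same route as the paper's: both identify $K$ as the stabilizer of $(x,y)$, use the hypothesis that $K$ normalizes $H$ to obtain $HKH=HK$ (equivalently, that $HK$ is a subgroup absorbing $H$ and $K$), and finish with coset bookkeeping. The paper compresses this into a three-line argument going straight from a nontrivial intersection to $z_1^{-1}z_2\in HKH=HK$, whereas you spell out the intermediate inclusion criterion $\actby{z_1H}(x,y)\subseteq\actby{z_2H}(x,y)\iff z_1H\subseteq z_2HK$, but the content is the same.
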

\begin{proof}
	Let $z_1,z_2\in Z(g)$ and 
	suppose the orbits $\actby{z_1H}(x,y)$ and $\actby{z_2H}(x,y)$ intersect nontrivially so that
	$\actby{z_1h_1}(x,y)=\actby{z_2h_2}(x,y)$ for some $h_1,h_2$ in $H$.
	Then $h_1^{-1}z_1^{-1}z_2h_2\in K$,
	so $z_1^{-1}z_2\in HKH=HK$ (since $K$ normalizes $H$), and hence $z_1HK=z_2HK$.
	Then
	$$
	\actby{z_1H}(x,y)=\actby{z_1HK}(x,y)=\actby{z_2HK}(x,y)=\actby{z_2H}(x,y).
	$$
\end{proof}

The next lemma, stated in the specific case of the symmetric group, provides
a method for using subgroups to reduce the number of expressions $\phi_{x,y}(e_i,e_j,e_k)$ that must be evaluated when verifying $\phi(\alpha,\beta)=0$.  Choosing the subgroup is
a balancing act\textemdash using a small subgroup decreases the number of factorizations to consider but typically increases the number of basis triples, while using a large subgroup increases the number of factorizations to consider but decreases the number of basis triples.

\begin{lemma}\label{le:orbitreduction}
	Let $S_n$ act on $V\cong\CC^n$ by its natural permutation representation, and let	$\alpha$ and $\beta$ be $S_n$-invariant $2$-cochains with $\beta$ linear
	and $\alpha$ linear or constant. Recall that $\phi(\alpha,\beta)$
	has components $\phi_g=\sum_{xy=g}\phi_{x,y}$.
	Suppose $g$ is in $S_n$ and has factorization $g=xy$.  
	Let $K=Z(x)\cap Z(y)$, the stabilizer of $(x,y)$ under componentwise
	conjugation, and let $H$ be a subgroup of $Z(g)$ normalized by $K$.
	Let $\mathcal{B}$ be the set of all three element subsets of $\{e_1,\ldots,e_n\}$,
	and let $\mathcal{B}_H$ be a set of $H$-orbit representatives of $\mathcal{B}$.
	If
	$$
	\sum_{(x',y')\in\actby{H}(x,y)}\phi_{x',y'}(e_i,e_j,e_k)=0\quad\text{for all $\{e_i,e_j,e_k\}\in\mathcal{B}_H$,}
	$$
	then
	$$
	\sum_{(x',y')\in\actby{Z(g)}(x,y)}\phi_{x',y'}(e_i,e_j,e_k)=0\quad\text{for all $\{e_i,e_j,e_k\}\in\mathcal{B}$.}
	$$
\end{lemma}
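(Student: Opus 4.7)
The plan is to partition the $Z(g)$-orbit of the factorization $(x,y)$ into $H$-orbits indexed by $HK$-cosets in $Z(g)$, and then use the $S_n$-equivariance of $\phi$ from Lemma~\ref{zactionONorbitsum} to reduce each piece to a vanishing $H$-orbit sum on a basis triple.

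First, I would extend the hypothesis from $\mathcal{B}_H$ to all of $\mathcal{B}$. Given an arbitrary $\{e_i,e_j,e_k\}\in\mathcal{B}$, let $\{e_{i_0},e_{j_0},e_{k_0}\}\in\mathcal{B}_H$ be its $H$-orbit representative, and choose $h\in H$ carrying one triple onto the other. Applying Lemma~\ref{zactionONorbitsum} with $z=h$ and noting $\actby{h}H=H$ shows that $\sum_{(x',y')\in\actby{H}(x,y)}\phi_{x',y'}(\actby{h}e_{i_0},\actby{h}e_{j_0},\actby{h}e_{k_0})$ equals $\actby{h}$ applied to the hypothesized (vanishing) sum for $\{e_{i_0},e_{j_0},e_{k_0}\}$. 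Since the three arguments of $\phi_{x',y'}$ are alternating, this forces vanishing on $\{e_i,e_j,e_k\}$ up to sign, and thus vanishing of the $H$-orbit sum on every triple in $\mathcal{B}$.

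Next, I would invoke Lemma~\ref{le:orbitpartition} to write $\actby{Z(g)}(x,y)=\bigsqcup_{z\in[Z(g)/HK]}\actby{zH}(x,y)$ for any set $[Z(g)/HK]$ of left coset representatives. For each such $z$ and each triple $\{e_i,e_j,e_k\}\in\mathcal{B}$, Lemma~\ref{zactionONorbitsum} rewrites the coset contribution $\sum_{(x',y')\in\actby{zH}(x,y)}\phi_{x',y'}(e_i,e_j,e_k)$ as $\actby{z}$ applied to $\sum_{(x',y')\in\actby{H}(x,y)}\phi_{x',y'}(e_{z^{-1}(i)},e_{z^{-1}(j)},e_{z^{-1}(k)})$. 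Because $S_n$ acts on the standard basis by permutation, the shifted triple still lies in $\mathcal{B}$, so the inner sum vanishes by the extension just established. Summing over coset representatives then yields $\sum_{(x',y')\in\actby{Z(g)}(x,y)}\phi_{x',y'}(e_i,e_j,e_k)=0$, as required.

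The main obstacle is the bookkeeping needed to guarantee that the second-step decomposition is a genuine disjoint union, so that the $Z(g)$-orbit sum is neither over- nor under-counted when reassembled from $H$-orbit pieces. This is exactly what the hypothesis that $H$ is normalized by $K$ secures through Lemma~\ref{le:orbitpartition}. The only remaining ingredients are the alternating property of cochains on $\bigwedge^3 V$ in the first step and the crucial feature of the natural permutation representation that $\actby{z}$ sends any basis triple to a basis triple\textemdash which is why the lemma is stated specifically for $S_n$ acting naturally on $V\cong\CC^n$.
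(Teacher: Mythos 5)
Your proposal is correct and follows essentially the same route as the paper's proof: first extend the vanishing of the $H$-orbit sum from $\mathcal{B}_H$ to all of $\mathcal{B}$ via Lemma~\ref{zactionONorbitsum}, then translate to each coset orbit $\actby{zH}(x,y)$ by equivariance, and finally assemble the $Z(g)$-orbit sum using the disjoint decomposition guaranteed by Lemma~\ref{le:orbitpartition}. The only difference is cosmetic ordering of the last two steps, so no further comment is needed.
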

\begin{proof} 
	Use Lemma~\ref{zactionONorbitsum} with $z$ ranging over the elements of $H$ to show that
	$$
	\sum_{(x',y')\in\actby{H}(x,y)}\phi_{x',y'}(e_{i},e_{j},e_{k})=0 \quad\text{for all $\{e_i,e_j,e_k\}\in\mathcal{B}$.}
	$$
	Then use Lemma~\ref{zactionONorbitsum} again to show for each $z$ in $Z(g)$, 
	$$
	\sum_{(x',y')\in\actby{zH}(x,y)}\phi_{x',y'}(e_{i},e_{j},e_{k})=0 \quad\text{for all $\{e_i,e_j,e_k\}\in\mathcal{B}$.}
	$$
	Let $[Z(g)/HK]$ be a set of left coset representatives of $HK$, and use
	Lemma~\ref{le:orbitpartition} to conclude,
	\begin{eqnarray*}
		& & \sum_{(x',y')\in\actby{Z(g)}(x,y)}\phi_{x',y'}(e_{i},e_{j},e_{k}) \\
		& = & \sum_{z\in [Z(g)/HK]}\left(\sum_{(x',y')\in\actby{zH}(x,y)}\phi_{x',y'}(e_{i},e_{j},e_{k})\right)=
		0 \quad\text{for all $\{e_i,e_j,e_k\}\in\mathcal{B}$.}
	\end{eqnarray*}
\end{proof}

\begin{remark}
	Though Lemma~\ref{le:orbitreduction} is stated for the symmetric group, 
	a similar idea might be useful in other groups where each centralizer acts by monomial matrices with respect to some basis.
\end{remark}

\section{Lifting to deformations}\label{sec:mainresult}

In Section~\ref{sec:cohomology} we determined all pre-Drinfeld orbifold
algebra maps for the symmetric group acting by its natural permutation representation.
When $n\geq4$, the space of such maps is three-dimensional with
one dimension of maps supported only on the identity and two dimensions of
maps supported only on $3$-cycles.  We now determine for which candidate maps $\kappa^L$ there exists a constant $2$-cochain $\kappa^C$ so that $\kappa=\kappa^L+\kappa^C$ also satisfies Properties (\ref{def:fourconditions-iii}) and (\ref{def:fourconditions-iv}) of a Drinfeld orbifold algebra map.  We consider three cases,
$\kappa^L$ supported only on the identity (Theorem~\ref{thm:LieOrbifoldAlgebraMaps}), supported only on $3$-cycles (Theorem~\ref{thm:DrinfeldOrbifoldAlgebraMaps}), and finally a combination
supported on both the identity and $3$-cycles (Proposition~\ref{prop:L1L3incompatible}).

\subsection*{Lie orbifold algebra maps}

The next theorem shows that for the symmetric group acting by its permutation representation, every pre-Drinfeld orbifold algebra map supported only on the identity lifts uniquely to a Lie orbifold algebra map.
The corresponding Lie orbifold algebras are described in Theorem~\ref{Lieorbifold}.

\begin{theorem}
	\label{thm:LieOrbifoldAlgebraMaps}
	The Lie orbifold algebra maps for the symmetric group $S_n$ ($n\geq3$) acting
	on $V\cong\CC^n$ by the natural permutation representation form a
	one-dimensional vector space generated by the map
	$\kappa:\bigwedge^2V\to V\otimes \CC S_n$ with
	$\kappa(e_i,e_j)=e_i-e_j$ for $1\leq i < j\leq n$.
\end{theorem}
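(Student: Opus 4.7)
The plan is to combine Corollary~\ref{cor:summarypreDOA}, which pins down the linear part, with a two-step obstruction analysis to determine the compatible constant part. By that corollary, any pre-Drinfeld orbifold algebra map supported only on the identity has the form $\kappa^L_1(e_i,e_j)=a(e_i-e_j)$ for some $a\in\CC$, so the family of admissible $\kappa^L$ is already one-dimensional. The remaining task is to determine what constant $2$-cochain $\kappa^C$, if any, can accompany such a $\kappa^L$ so that $\kappa=\kappa^L+\kappa^C$ satisfies all the conditions of Definition~\ref{def:fourconditions}.

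For the first obstruction $\phi(\kappa^L,\kappa^L)=2\psi(\kappa^C)$, I would first observe that since $\kappa^L$ is supported only on the identity, only the factorization $1\cdot 1$ contributes to any $\phi_g$, so $\phi(\kappa^L,\kappa^L)$ is supported on the identity. A direct evaluation of $\phi_{1,1}$ on a basis triple $(e_i,e_j,e_k)$ expands into three nested applications of $\kappa^L_1$; using $\kappa^L_1(e_p,e_q)=a(e_p-e_q)$ and bilinearity, the three cyclic terms telescope to $a^2\big[(e_k-e_j)+(e_i-e_k)+(e_j-e_i)\big]=0$. Hence $\phi(\kappa^L,\kappa^L)=0$, and Property~(\ref{def:fourconditions-iii}) reduces to $\psi(\kappa^C)=0$, the mixed Jacobi identity for $\kappa^C$. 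By Corollary~\ref{constantcocycles}, the $S_n$-invariant solutions are precisely the maps $\kappa^C_{\tri}$ of Definition~\ref{def:kappatri}, parameterized by a single scalar $c$.

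The second obstruction $\phi(\kappa^C,\kappa^L)=0$ detects the interaction between $a$ and $c$. Because $\kappa^C$ is supported only on $3$-cycles and $\kappa^L$ only on the identity, only factorizations $x\cdot 1$ with $x$ a $3$-cycle contribute, so by the $S_n$-invariance afforded by Lemma~\ref{le:orbitphixyphig} it suffices to check the component on a single representative $3$-cycle $g=(ijk)$. Evaluating $\phi_{(ijk),1}$ on the basis triple $(e_i,e_j,e_k)$ expands using the three nonzero values $\kappa^C_{(ijk)}(e_i,e_j)=\kappa^C_{(ijk)}(e_j,e_k)=\kappa^C_{(ijk)}(e_k,e_i)=c$ together with $\kappa^L_1(e_p,e_q)=a(e_p-e_q)$, yielding a nonzero scalar multiple of $ac$. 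Setting this equal to zero forces $ac=0$, so once the identity-supported linear part is nontrivial, the constant part must vanish, and one is left with exactly the one-parameter family $\kappa=\kappa^L$ generated by the map $\kappa(e_i,e_j)=e_i-e_j$.

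The main obstacle is this second-obstruction computation: after the reduction to one representative $3$-cycle and one basis triple, correctly tracking the sign conventions in $\kappa^C_{\tri}$ and the cyclic symmetrization built into $\phi_{x,y}$ is what produces the scalar multiple of $ac$ rather than an accidental cancellation. Along the way one should also verify the image constraint (automatic since $V^1=V$) and the $S_n$-invariance of $\kappa^L_1$ (immediate from the formula), so that no additional constraints are imposed on $a$ and the one-dimensional family is exactly as claimed.
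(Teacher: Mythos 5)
Your proposal is correct and follows essentially the same route as the paper's proof: reduce to $\kappa^L(e_i,e_j)=a(e_i-e_j)$ via Corollary~\ref{cor:summarypreDOA}, note $\phi(\kappa^L,\kappa^L)=0$ so the first obstruction forces $\kappa^C=\kappa^C_{\tri}$ by Corollary~\ref{constantcocycles}, and then kill $c$ by evaluating the $(123)$-component of $\phi(\kappa^C,\kappa^L)$ on $e_1,e_2,e_3$. Your version of that last evaluation, yielding $12ac=0$, is in fact slightly more careful than the paper's displayed $12c$, which tacitly assumes the linear part is nontrivial.
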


\begin{proof}	
	Let $\kappa^L$ be a pre-Drinfeld orbifold algebra map supported only
	on the identity.  
    By Corollary~\ref{cor:summarypreDOA}, we have $\kappa^L(e_i,e_j)=a(e_i-e_j)$
	for some $a\in\CC$.
	It is straightforward to show that $\phi(\kappa^L,\kappa^L)=0$,
	so now Property~(\ref{def:fourconditions-iii}), $\phi(\kappa^L,\kappa^L)=2\psi(\kappa^C)$, holds if and only if $\psi(\kappa^C)=0$.  By Corollary~\ref{constantcocycles}, the $G$-invariant constant $2$-cochains such that $\psi(\kappa^C)=0$ (i.e., satisfying the mixed Jacobi identity) are supported only on $3$-cycles and have
		$$
		\kappa_{(ijk)}^C(e_i,e_j)=\kappa_{(ijk)}^C(e_j,e_k)=\kappa^C_{(ijk)}(e_k,e_i)=c
		$$
		for some scalar $c$. 
		
	Turning to Property~(\ref{def:fourconditions-iv}), if $c=0$, then $\kappa^C\equiv0$, so $\phi(\kappa^C,\kappa^L)=0$,
	and $\kappa=\kappa^L$ is a Lie orbifold algebra map.
	If $c\neq0$, then $\kappa=\kappa^L+\kappa^C$ is not a Lie orbifold
	algebra map since $\phi(\kappa^C,\kappa^L)\neq0$.
	In particular, the component $\phi_{(123)}$ of $\phi(\kappa^C,\kappa^L)$ is nonzero on the basis triple 
	$e_1, e_2, e_3$:
	\[ 
	2[\kappa_{(123)}^C(e_1,e_2-e_3)+\kappa_{(123)}^C(e_2,e_3-e_1)+\kappa_{(123)}^C(e_3,e_1-e_2)]=12c\neq 0. 
	\]
\end{proof}

In general, a lifting need not be unique.
See~\cite[Example 4.3]{SWorbifold} 
 for an example of a cyclic group having a Lie orbifold algebra map
 with $\kappa^L$ and $\kappa^C$ both nonzero.

\subsection*{Other Drinfeld orbifold algebra maps} 

Next we describe all possible Drinfeld orbifold algebra maps supported only 
off of the identity.  We outline the proof here but relegate the
details of clearing the obstructions to Section~\ref{sec:computations}.
The corresponding Drinfeld orbifold algebras are described in
Theorem~\ref{mainexamples}.

\begin{theorem}\label{thm:DrinfeldOrbifoldAlgebraMaps}
	For $S_n$ ($n\geq3$) acting on $V\cong\CC^n$ by its natural permutation representation, the Drinfeld orbifold algebra maps supported only off the identity are precisely the maps of the form $\kappa=\kappa^L_{\tri}+\kappa^C_{\penta}+\kappa^C_{\tri}$,
	with $\kappa_{\tri}$ as in Definition~\ref{def:kappatri} and
	$\kappa^C_{\penta}$ as in Definition~\ref{def:kappa5cyc}.
\end{theorem}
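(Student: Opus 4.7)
The proof plan begins by invoking Corollary~\ref{cor:summarypreDOA}: since $\kappa^L$ is to be a pre-Drinfeld orbifold algebra map supported off the identity, it must equal $\kappa^L_{\tri}$ for some $a,b\in\CC$. The task then reduces to determining all $S_n$-invariant constant $2$-cochains $\kappa^C$ such that Properties~(\ref{def:fourconditions-iii}) and (\ref{def:fourconditions-iv}) of a Drinfeld orbifold algebra map hold, namely $2\psi(\kappa^C)=\phi(\kappa^L_{\tri},\kappa^L_{\tri})$ and $\phi(\kappa^C,\kappa^L_{\tri})=0$.

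The heart of the argument is analyzing the first obstruction. I would begin by identifying the support of $\phi(\kappa^L_{\tri},\kappa^L_{\tri})$: each component $\phi_{x,y}$ is nonzero only when both $x$ and $y$ are $3$-cycles, so the support lies in the conjugacy classes of products of two $3$-cycles, namely the identity, $3$-cycles, $5$-cycles, and double transpositions. Working one class at a time with a representative $g$, I would apply Lemma~\ref{le:orbitreduction} to collapse $\phi_g$ to a manageable sum over a small set of factorization-orbit representatives, with basis triples reduced to orbit representatives under a suitable subgroup of $Z(g)$. Matching the outcome against $2\psi(\kappa^C)_g$, together with the cohomological description of constant $2$-cochains from Section~\ref{sec:cohomology}, then dictates what $\kappa^C_g$ must be on each class: the $3$-cycle contribution will feed into the trivial part of $\kappa^C$, the $5$-cycle contribution will produce a required $\kappa^C_\sigma$ scaled by $(a-b)^2$ (thus forcing the shape of $\kappa^C_{\penta}$), and the identity and double-transposition contributions must be shown to vanish outright, since Lemma~\ref{le:support} rules out compatible $\psi(\kappa^C)$ terms there.

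Once a particular lifting $\kappa^C_{\penta}$ is isolated from the $5$-cycle data, the remaining ambiguity in the first obstruction is measured by the kernel of $\psi$ on $S_n$-invariant constant $2$-cochains, which by Corollary~\ref{constantcocycles} consists only of the maps $\kappa^C_{\tri}$ of Definition~\ref{def:kappatri}. Hence every candidate $\kappa^C$ has the form $\kappa^C_{\penta}+\kappa^C_{\tri}$. It then remains to verify that every such $\kappa^C$ actually satisfies the second obstruction $\phi(\kappa^C,\kappa^L_{\tri})=0$; this again splits by conjugacy class of $g=xy$ and is amenable to Lemma~\ref{le:orbitreduction}. The chief obstacle throughout is the combinatorial accounting of $\phi(\kappa^L_{\tri},\kappa^L_{\tri})$ on the $5$-cycle class, where contributions from different factorizations $(x,y)$ conspire to produce the coefficients $2$ and $-1$ attached to the two $5$-cycle sums appearing in $\kappa^C_{\penta}$; tracking whether a $5$-cycle $\sigma$ satisfies $\sigma(i)=j$ or $\sigma^2(i)=j$ will be the delicate point, and is exactly what motivates the reduction techniques developed in Section~\ref{sec:reduction}.
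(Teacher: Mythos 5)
Your proposal follows essentially the same route as the paper: reduce to $\kappa^L=\kappa^L_{\tri}$ via Corollary~\ref{cor:summarypreDOA}, evaluate $\phi(\kappa^L_{\tri},\kappa^L_{\tri})$ class by class with the orbit reductions of Lemma~\ref{le:orbitreduction} to extract $\kappa^C_{\penta}$ from the $5$-cycle components, identify the remaining freedom as $\kappa^C_{\tri}$ via Corollary~\ref{constantcocycles}, and then clear the second obstruction by the same case analysis. One minor slip: the cycle types arising as products of two $3$-cycles also include that of $(123)(456)$ (two disjoint $3$-cycles), which is missing from your list and must be checked, though it vanishes immediately because $\im\kappa^L_{(456)}\subseteq V^{(123)}\subseteq\ker\kappa^L_{(123)}$.
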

\begin{proof}
	Suppose $\kappa^L$ is a pre-Drinfeld orbifold algebra map supported only
	off of the identity.  By Corollary~\ref{cor:summarypreDOA},
	we must have $\kappa^L=\kappa^L_{\tri}$ for some parameters $a,b\in\CC$ as in Definition~\ref{def:kappatri}. Now, the goal is to find all $G$-invariant maps $\kappa^C$ such that Properties (\ref{def:fourconditions-iii}) and (\ref{def:fourconditions-iv})
	of a Drinfeld orbifold algebra map also hold.
	
	First, we find a particular lifting.
\begin{itemize}
	\item {\it First obstruction.} In Propositions~\ref{prop:zerocases} and~\ref{prop:5-cycle}, we evaluate $\phi(\kappa^L_{\tri},\kappa^L_{\tri})$. The results suggest Definition~\ref{def:kappa5cyc} of an $S_n$-invariant map $\kappa^C_{\penta}$ so that
	$$\phi(\kappa^L_{\tri},\kappa^L_{\tri})=2\psi(\kappa^C_{\penta}),$$
	as verified in Proposition~\ref{prop:phiL3L3=2psiC5}.
	\item {\it Second obstruction.} In Proposition~\ref{prop:phiC5L3=0}, we show $\phi(\kappa^C_{\penta},\kappa^L_{\tri})=0$.
\end{itemize}
	Thus $\kappa=\kappa^L_{\tri}+\kappa^C_{\penta}$ is
	a Drinfeld orbifold algebra map.

Next, we see how the particular lifting can be modified to produce all other possible liftings. Let $\kappa^C$ be any $G$-invariant constant $2$-cochain.
\begin{itemize}
  \item {\it First obstruction. } Given that $\phi(\kappa^L_{\tri},\kappa^L_{\tri})=2\psi(\kappa^C_{\penta})$,
we have that $\phi(\kappa^L_{\tri},\kappa^L_{\tri})=2\psi(\kappa^C)$
if and only if $\psi(\kappa^C-\kappa^C_{\penta})=0$.
  By Corollary~\ref{constantcocycles}, $\psi(\kappa^C-\kappa^C_{\penta})=0$
  if and only if $\kappa^C-\kappa^C_{\penta}=\kappa^C_{\tri}$, with $\kappa^C_{\tri}$
  as in Definition~\ref{def:kappatri} for some parameter $c\in\CC$.
  \item {\it Second obstruction. }
  In Propositions~\ref{prop:zerocases} and~\ref{prop:5-cycle}, we show $\phi(\kappa^C_{\tri},\kappa^L_{\tri})=0$, and in Proposition~\ref{prop:phiC5L3=0}, we show $\phi(\kappa^C_{\penta},\kappa^L_{\tri})=0$,
  so
  $$\phi(\kappa^C_{\tri}+\kappa^C_{\penta},\kappa^L_{\tri})
  =\phi(\kappa^C_{\tri},\kappa^L_{\tri})
  +\phi(\kappa^C_{\penta},\kappa^L_{\tri})=0.$$
 \end{itemize}
  Thus the liftings of $\kappa^L_{\tri}$ to a Drinfeld orbifold algebra map are
  the maps of the form $\kappa=\kappa^L_{\tri}+\kappa^C_{\tri}+\kappa^C_{\penta}$.
\end{proof}

Finally, we show for $S_n$ that there are no Drinfeld orbifold algebra maps that
are supported both on and off the identity.

\begin{proposition}\label{prop:L1L3incompatible}
	Let $\kappa^L$ be a pre-Drinfeld orbifold algebra map for the natural permutation representation of the symmetric group
	$S_n$ ($n\geq3)$.
	If $\kappa^L$ is supported both on the identity
	and off the identity, then $\kappa^L$ does not lift to a Drinfeld orbifold
	algebra map.
\end{proposition}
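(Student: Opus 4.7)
The plan is to assume for contradiction that a pre-Drinfeld orbifold algebra map of the form $\kappa^L=\kappa_1^L+\kappa^L_{\tri}$, with both summands nonzero, lifts to a Drinfeld orbifold algebra map $\kappa=\kappa^L+\kappa^C$. I will exhibit the obstruction at the component indexed by the $3$-cycle $(123)$, showing Property~(\ref{def:fourconditions-iii}) fails on the basis triple $(e_1,e_2,e_3)$.

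By Corollary~\ref{cor:summarypreDOA}, I write $\kappa_1^L(e_i,e_j)=a_1(e_i-e_j)$ with $a_1\neq 0$, and $\kappa^L_{\tri}$ as in Definition~\ref{def:kappatri} with parameters $(a,b)\neq(0,0)$ (and $a\neq 0$ when $n=3$). Using bilinearity of $\phi$ in each slot I would expand
$$\phi(\kappa^L,\kappa^L) = \phi(\kappa_1^L,\kappa_1^L) + \phi(\kappa_1^L,\kappa^L_{\tri}) + \phi(\kappa^L_{\tri},\kappa_1^L) + \phi(\kappa^L_{\tri},\kappa^L_{\tri}).$$
From the proof of Theorem~\ref{thm:LieOrbifoldAlgebraMaps} we have $\phi(\kappa_1^L,\kappa_1^L)=0$, and Theorem~\ref{thm:DrinfeldOrbifoldAlgebraMaps} gives $\phi(\kappa^L_{\tri},\kappa^L_{\tri})=2\psi(\kappa^C_{\penta})$. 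Consequently, Property~(\ref{def:fourconditions-iii}) for $\kappa$ is equivalent to the existence of the $G$-invariant constant $2$-cochain $\kappa^C_*:=\kappa^C-\kappa^C_{\penta}$ satisfying
$$\phi(\kappa_1^L,\kappa^L_{\tri}) + \phi(\kappa^L_{\tri},\kappa_1^L) = 2\psi(\kappa^C_*).$$

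Next, I would restrict to the component at $g=(123)$. Since $\kappa_1^L$ is supported on the identity and $\kappa^L_{\tri}$ on $3$-cycles, the only factorizations $xy=(123)$ contributing to the LHS are $(1,(123))$ and $((123),1)$. A direct evaluation at $(e_1,e_2,e_3)$, using Definition~\ref{def:kappatri} (so that $\kappa^L_{(123)}(e_i,e_j)$ equals the common vector $A=a(e_1+e_2+e_3)+b(e_4+\cdots+e_n)$ for $\{i,j\}\subset\{1,2,3\}$), the alternating identity $\kappa_1^L(v,v)=0$, and $\actby{(123)}e_i=e_{(123)(i)}$, would simplify to
\begin{align*}
\phi_{1,(123)}(e_1,e_2,e_3) &= 2\kappa_1^L(e_1+e_2+e_3,A) = 2 b a_1\bigl[(n-3)(e_1+e_2+e_3) - 3(e_4+\cdots+e_n)\bigr], \\
\phi_{(123),1}(e_1,e_2,e_3) &= 12 a_1 A.
\end{align*}
Their sum is $a_1\bigl[(12a+2b(n-3))(e_1+e_2+e_3) + 6b(e_4+\cdots+e_n)\bigr]$, which lies entirely in $V^{(123)}=\Span(e_1+e_2+e_3,e_4,\ldots,e_n)$ and is nonzero under our standing hypotheses on $a_1,a,b$ (the case $n=3$ collapses to $12a_1 a(e_1+e_2+e_3)$, still nonzero).

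Finally, I would observe that $2\psi(\kappa^C_*)_{(123)}(v_1,v_2,v_3)$ is a scalar combination of the vectors $\actby{(123)}v_i-v_i$, and hence always lies in $(V^{(123)})^{\perp}$. Since $V^{(123)}\cap(V^{(123)})^{\perp}=0$, the required equality forces the LHS above to vanish at $(e_1,e_2,e_3)$, contradicting the nonvanishing just established. The main obstacle in carrying this out is the evaluation of $\phi_{1,(123)}(e_1,e_2,e_3)$, where one must expand $\kappa_1^L$ of $e_1+e_2+e_3$ against $e_4+\cdots+e_n$ to produce the $(n-3)$-dependent coefficient; once those cross-terms are in hand, the fixed-space decomposition $V=V^{(123)}\oplus(V^{(123)})^{\perp}$ immediately delivers the contradiction.
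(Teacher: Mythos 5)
Your proposal is correct and follows essentially the same route as the paper: isolate the cross terms $\phi_{1,(123)}+\phi_{(123),1}$, evaluate them on the triple $(e_1,e_2,e_3)$, and observe they cannot be matched by $2\psi(\kappa^C)_{(123)}$; your computed values agree exactly with the paper's. The only (minor) difference is the last step — the paper shows $\psi_{(123)}(e_1,e_2,e_3)=0$ directly from $S_n$-invariance of $\kappa^C$, whereas you argue that the image of $\psi_{(123)}$ lies in $(V^{(123)})^{\perp}=\im((123)-1)$ while the cross-term sum lies in $V^{(123)}$; both are valid.
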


\begin{proof}
	Let $\kappa^L\not\equiv0$ be a pre-Drinfeld orbifold algebra map.  By Corollary~\ref{cor:summarypreDOA}, we have
	$\kappa^L=\kappa^L_1+\kappa^L_{\tri}$, where $\kappa^L_{\tri}$ is
	as in Definition~\ref{def:kappatri} with parameters $a,b\in\CC$, and
	$\kappa^L_1(e_i,e_j)=a_1(e_i-e_j)$ for some parameter $a_1\in\CC$.  
	Suppose $\kappa^C$ is an $S_n$-invariant constant $2$-cochain.
	We show that if $\kappa^C$ clears the first obstruction, i.e., $\phi(\kappa^L,\kappa^L)=2\psi(\kappa^C)$, then $a=b=0$ or $a_1=0$, so
	$\kappa^L$ is supported only on the identity or only on the $3$-cycles.
	
	 We let $g=(123)$ and compare the $g$-components of $2\psi(\kappa^C)$ and $\phi(\kappa^L,\kappa^L)$.
	First, note that since $\kappa^C$ is $S_n$-invariant, $$\kappa_{(123)}^C(e_1,e_2)=\kappa^C_{(123)}(e_2,e_3)=\kappa^C_{(123)}(e_3,e_1),$$
	and so 
	$$\psi_{(123)}(e_1,e_2,e_3)=
	\kappa^C_{(123)}(e_1,e_2)(e_1-e_3)
	+\kappa^C_{(123)}(e_2,e_3)(e_2-e_1)
	+\kappa^C_{(123)}(e_3,e_1)(e_3-e_2)=0.$$
	On the other hand,
	$$
	\phi(\kappa^L,\kappa^L)=\phi(\kappa^L_1,\kappa^L_1)+\phi(\kappa^L_{\tri},\kappa^L_1)+\phi(\kappa^L_{1},\kappa^L_{\tri})+\phi(\kappa^L_{\tri},\kappa^L_{\tri}).
	$$
	The $(123)$-component of $\phi(\kappa^L_1,\kappa^L_1)$ is certainly zero,
	and by Proposition~\ref{prop:zerocases}, the $(123)$-component of
	$\phi(\kappa^L_{\tri},\kappa^L_{\tri})$ is also zero.
	Only the cross terms remain, so
	$$
	\phi_{(123)}=\phi_{(123),1}+\phi_{1,(123)}.
	$$
	Recall
	\[\phi_{x,y}(e_i,e_j,e_k)=\phiLxy{x}{y}{e_i}{e_j}{e_k}.\]
	The cross terms of $\phi_{(123)}$ evaluated on the basis triple $e_1,e_2,e_3$ are
	\begin{align*}
	\phi_{(123),1}(e_1,e_2,e_3)
	&=2a_1(\kappa^L_{(123)}[e_1,e_2-e_3]
	+\kappa^L_{(123)}[e_2,e_3-e_1]
	+\kappa^L_{(123)}[e_3,e_1-e_2])\\
	&=4a_1(\kappa^L_{(123)}[e_1,e_2]
	+\kappa^L_{(123)}[e_2,e_3]
	+\kappa^L_{(123)}[e_3,e_1])\\
	&=12a_1[a(e_1+e_2+e_3)+b(e_4+\cdots+e_n)]
	\end{align*}
	and
	\begin{align*}
	\phi_{1,(123)}(e_1,e_2,e_3)
	&=2\kappa^L_{1}[e_1+e_2+e_3,\kappa^L_{(123)}(e_1,e_2)]\\
	&=2a_1b[(n-3)(e_1+e_2+e_3)-3(e_4+\cdots+e_n)],
	\end{align*}
	which sum to
	\begin{align*}
	\phi_{(123)}(e_1,e_2,e_3)
	&=2a_1[(6a+(n-3)b)(e_1+e_2+e_3)+3b(e_4+\cdots+e_n)].
	\end{align*}
	Thus, if $n\geq4$, then $\phi_{(123)}(e_1,e_2,e_3)=2\psi_{(123)}(e_1,e_2,e_3)$ if
	and only if $a_1=0$ or $b=0=a$. 
	If $n=3$, then $\kappa^L_{\tri}$ has only one parameter, $a$,
	and $\phi_{(123)}(e_1,e_2,e_3)=12a_1a(e_1+e_2+e_3)$, which is zero if and
	only if $a_1=0$ or $a=0$.	
\end{proof}


\section{Clearing the obstructions}\label{sec:computations}

In Section~\ref{sec:cohomology}, we defined a pre-Drinfeld orbifold algebra map $\kappa^L_{\tri}$.  This section is devoted to lifting $\kappa^L_{\tri}$ to a
Drinfeld orbifold algebra map and provides the details of the proof of Theorem~\ref{thm:DrinfeldOrbifoldAlgebraMaps} outlined in Section~\ref{sec:mainresult}.
In view of Definition~\ref{def:fourconditions}, we first evaluate $\phi(\kappa^L_{\tri},\kappa^L_{\tri})$ and ``clear the first obstruction'' by defining a $G$-invariant map $\kappa^C_{\penta}$ such that $\phi(\kappa^L_{\tri},\kappa^L_{\tri})=2\psi(\kappa^C_{\penta})$.  Existence of
such a map is predicted by \cite[Theorem~9.2]{SWbrackets12}.
We then ``clear the second obstruction'' by showing $\phi(\kappa^C_{\penta},\kappa^L_{\tri})=0$. These computations show that $\kappa=\kappa^L_{\tri}+\kappa^C_{\penta}$ is a Drinfeld orbifold algebra map.

\subsection*{Clearing the First Obstruction}

We begin by recording simplifications of $\phi^{*}_{x,y}$, a summand of the component $\phi_g^{*}$ of $\phi(\kappa^{*}_{\tri},\kappa^L_{\tri})$, where $*$ stands for $L$ or $C$.  Simplification of $\phi^{*}_{x,y}(e_i,e_j,e_k)$ depends on
the location of the basis vectors relative to the fixed spaces $V^x$ and $V^y$,
so we use the following indicator function.
For $y \in S_n$ and $v \in V$, let
\[\delta_y(v)=\begin{cases}
1 & \text{if $v \in V^y$} \\
0 & \text{otherwise}.
\end{cases}\]

\begin{lemma}
	\label{lemma:simplification}
	Let $\kappa^{*}_{\tri}$ with $*=L$ or $*=C$ be as in Definition~\ref{def:kappatri} and let $\phi^{*}_{x,y}$ denote a term of the component $\phi^{*}_g$ of $\phi(\kappa^{*}_{\tri},\kappa^L_{\tri})$.
	Let $x$ and $y$ be $3$-cycles such that $xy=g$, and let $1\leq i,j,k\leq n$.
	\begin{enumerate}[label={(\arabic*)},ref={\thelemma~(\arabic*)}]
		\setlength\itemsep{.5em}
		\item If $e_i,e_j\in V^y$, then 
		$\phi^{*}_{x,y}(e_i,e_j,e_k)=0$. \label{part1}
		\item If $e_i \in V^y\cap V^x$, then $\phi^{*}_{x,y}(e_i,e_j,e_k)=0$. \label{part2}
		\item If $e_i\in V^y\backslash V^x$ and $e_j\not\in V^y$, then  
		\[\phi^{*}_{x,y}(e_i,e_j,\actby{y}e_j)= 2(b-a)[\delta_y(\actby{x}e_i)-\delta_y(\actbylessspace{x^{-1}}e_i)]\kappa_x^{*}[e_i,\actby{x}e_i].\] \label{part3}
		\item If $e_i \notin V^y$, then $\phi^{*}_{x,y}(e_i,\actby{y}e_i,\actbylessspace{y^2}e_i)=0$. \label{part4}
	\end{enumerate}
	Note that $\phi^{*}_{x,y}$ can be evaluated on any basis triple by using the
	alternating property along with these cases.
\end{lemma}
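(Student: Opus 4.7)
My plan is to unpack the definition
$$
\phi^{*}_{x,y}(e_i,e_j,e_k) = \kappa^{*}_x[e_i + \actby{y}e_i,\, \kappa^L_y(e_j,e_k)] + \kappa^{*}_x[e_j + \actby{y}e_j,\, \kappa^L_y(e_k,e_i)] + \kappa^{*}_x[e_k + \actby{y}e_k,\, \kappa^L_y(e_i,e_j)]
$$
case by case and exploit two structural facts throughout. First, by Definition~\ref{def:kappatri} and bilinearity, $\kappa^{*}_x(u,v) = 0$ whenever $u \in V^x$ or $v \in V^x$, and similarly for $\kappa^L_y$. Second, every permutation fixes $e_{[n]} = \sum_l e_l$, so $e_{[n]} \in V^x$ and hence $\kappa^{*}_x[u, e_{[n]}] = 0$ for every $u \in V$. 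I write $\mathrm{supp}(y)$ for the set of basis vectors moved by $y$, which is the complement in $\{e_1,\ldots,e_n\}$ of the standard basis of $V^y$.

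Parts (1) and (2) are essentially immediate. In (1), two of the basis arguments lie in $V^y$, so each of the three inner values $\kappa^L_y(e_j,e_k)$, $\kappa^L_y(e_k,e_i)$, $\kappa^L_y(e_i,e_j)$ has at least one $V^y$-argument and vanishes. In (2), the hypothesis $e_i \in V^y$ kills the second and third summands, while the first reduces to $\kappa^{*}_x[2 e_i,\, \kappa^L_y(e_j,e_k)]$ (using $\actby{y}e_i = e_i$) and is then zero because $e_i \in V^x$.

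Part (3) is the calculation-heavy case. The same $V^y$-argument $e_i$ kills the second and third summands and collapses the first to $2\kappa^{*}_x[e_i,\, \kappa^L_y(e_j, \actby{y}e_j)]$. I would then rewrite
$$
\kappa^L_y(e_j, \actby{y}e_j) = a\!\!\sum_{e_l \in \mathrm{supp}(y)}\!\! e_l + b\!\!\sum_{e_l \notin \mathrm{supp}(y)}\!\! e_l = (a-b)\!\!\sum_{e_l \in \mathrm{supp}(y)}\!\! e_l + b\, e_{[n]},
$$
so that the $b\,e_{[n]}$ piece disappears when paired with $e_i$. Expanding the remaining inner product by bilinearity, only the summands $\kappa^{*}_x[e_i, e_l]$ with $e_l \in \mathrm{supp}(x)\setminus\{e_i\} = \{\actby{x}e_i,\, \actbylessspace{x^{-1}}e_i\}$ can be nonzero, and the indicator $\delta_y$ records whether each such $e_l$ lies in $\mathrm{supp}(y)$. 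Combining this with the antisymmetry identity $\kappa^{*}_x[e_i, \actbylessspace{x^{-1}}e_i] = -\kappa^{*}_x[e_i, \actby{x}e_i]$—which follows from the common cyclic value of $\kappa^{*}_x$ on a $3$-cycle together with alternation—produces the stated coefficient $2(b-a)[\delta_y(\actby{x}e_i) - \delta_y(\actbylessspace{x^{-1}}e_i)]$. Tracking the shift-of-$1$ in the indicator bookkeeping without a sign error is the main obstacle of the lemma.

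Part (4) uses that $e_i, \actby{y}e_i, \actbylessspace{y^2}e_i$ form a cyclic $y$-orbit, so the three inner values $\kappa^L_y$ coincide and the three summands add up to $2\kappa^{*}_x\bigl[\sum_{e_l \in \mathrm{supp}(y)} e_l,\, \kappa^L_y(e_i, \actby{y}e_i)\bigr]$. Rewriting the inner vector once more as $(a-b)\sum_{e_l \in \mathrm{supp}(y)} e_l + b\, e_{[n]}$, the first piece vanishes by antisymmetry of $\kappa^{*}_x$ and the second by $e_{[n]} \in V^x$. The closing ``alternating property'' remark is then the observation that any basis triple contains two, one, or zero vectors from $V^y$, placing it---after a permutation (and sign) that moves any in-$V^y$ entries to the first slot and arranges the remaining support-of-$y$ entries as $e_j, \actby{y}e_j$ or $e_i, \actby{y}e_i, \actbylessspace{y^2}e_i$---under Part (1), Part (2)/(3), or Part (4).
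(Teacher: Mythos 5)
Your proposal is correct and follows essentially the same route as the paper's proof: kill the extraneous summands using $V^y\subseteq\ker\kappa^L_y$ and $V^x\subseteq\ker\kappa^{*}_x$, split the inner vector as $(a-b)$ times the support sum of $y$ plus a multiple of $e_{[n]}\in V^x$, and close part (3) with $\kappa^{*}_x[e_i,{}^{x^{-1}}e_i]=-\kappa^{*}_x[e_i,{}^{x}e_i]$. The paper organizes the part (3) bookkeeping marginally differently (writing the coefficients as $a(1-\delta_y)+b\delta_y$ and using $\sum_{h\in\langle x\rangle}{}^{h}e_i\in V^x$ rather than splitting off $b\,e_{[n]}$ first), but the computation and conclusion agree.
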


\begin{proof}
	Consider
	\[\phi^{*}_{x,y}(e_i,e_j,e_k)=\phibulletxy{x}{y}{e_i}{e_j}{e_k}.\]
	Recall that if $z$ is a $3$-cycle then $V^z\subseteq \ker\kappa_z^*$.
	\begin{enumerate}
		\item If $e_i,e_j\in V^y$, then 
		$\phi^{*}_{x,y}(e_i,e_j,e_k)=0$ since $V^y\subseteq\ker\kappa_y^L$.
		\item If $e_i\in V^y\cap V^x$, then the first term of $\phi^{*}_{x,y}$ vanishes
		because $e_i\in V^x\subseteq\ker\kappa_x^*$, and the second two terms
		of $\phi^{*}_{x,y}$ vanish because $e_i\in V^y\subseteq\ker\kappa^L_y$.
		\item If $e_i\in V^y\backslash V^x$ and $e_j\not\in V^y$, then $\phi^{*}_{x,y}(e_i,e_j,\actby{y}e_j) = 2\kappa_x^{*}[e_i,\kappa^L_y(e_j,\actby{y}e_j)]$.
		Using bilinearity and $V^x\subseteq\ker\kappa_x^*$, the right hand side is a linear combination of expressions $\kappa_x^{*}[e_i,\actby{h}e_i]$ for $h \in \langle x\rangle$.  The appropriate coefficients, $a$ or $b$, can be extracted using the fixed space indicator function.  Also, $\sum_{h\in\langle x\rangle} \actby{h}e_i \in V^x\subseteq\ker\kappa_x^*$.  Thus,
		\begin{align*}
			\phi^{*}_{x,y}(e_i,e_j,\actby{y}e_j) &= 2\kappa_x^{*}[e_i,\kappa^L_y(e_j,\actby{y}e_j)]\\
			&=2a\sum_{h\in\langle x\rangle}(1-\delta_y(\actby{h}e_i))\kappa_x^{*}[e_i,\actby{h}e_i]+2b\sum_{h\in\langle x\rangle}\delta_y(\actby{h}e_i)\kappa_x^{*}[e_i,\actby{h}e_i] \\
			&=2(b-a)\sum_{h\in\langle x\rangle}\delta_y(\actby{h}e_i)\kappa_x^{*}[e_i,\actby{h}e_i]\\
			&=2(b-a)[\delta_y(\actby{x}e_i)-\delta_y(\actbylessspace{x^{-1}}e_i)]\kappa_x^{*}[e_i,\actby{x}e_i].
		\end{align*}
		\item Lastly, if $e_i\notin V^y$, note that
		$\phi^{*}_{x,y}(e_i,\actby{y}e_i,\actbylessspace{y^2}e_i)=2\kappa^{*}_x[e_i+\actby{y}e_i+\actbylessspace{y^2}e_i,\kappa^L_y(e_i,\actby{y}e_i)]$ since $\kappa^L_y(e_i,\actby{y}e_i)=\kappa^L_y(\actby{y}e_i,\actbylessspace{y^2}e_i)=\kappa^L_y(\actbylessspace{y^2}e_i,e_i)$.
		Express $\kappa^L_y(e_i,\actby{y}e_i)$ as a linear combination of the vector $u=e_i+\actby{y}e_i+\actbylessspace{y^2}e_i$ and the vector $u_0=e_1+\cdots+e_n$ (which is in
		the kernel of $\kappa_x^{*}$), to see that
		\[\phi^{*}_{x,y}(e_i,\actby{y}e_i,\actbylessspace{y^2}e_i)=2\kappa_x^{*}(u,(a-b)u+bu_0)=0.\]	
	\end{enumerate}
\end{proof}


As mentioned in the outline of the proof of Theorem~\ref{thm:DrinfeldOrbifoldAlgebraMaps}, the next two propositions are used to evaluate both $\phi(\kappa^L_{\tri},\kappa^L_{\tri})$ and $\phi(\kappa^C_{\tri},\kappa^L_{\tri})$.

\begin{proposition}\label{prop:zerocases}
	Let $\kappa^{*}_{\tri}$ with $*=L$ or $*=C$ be as in Definition~\ref{def:kappatri}.  
	For $g \in S_n$, let $\phi^{*}_g$ be the $g$-component of $\phi(\kappa^{*}_{\tri},\kappa^L_{\tri})$.  
If $g$ is not a $5$-cycle then $\phi^{*}_g \equiv 0$.
\end{proposition}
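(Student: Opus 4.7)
The plan is a case analysis on the cycle type of $g$. Since both $\kappa^L_{\tri}$ and $\kappa^{*}_{\tri}$ are supported only on $3$-cycles, a summand $\phi^{*}_{x,y}$ of $\phi^{*}_g=\sum_{xy=g}\phi^{*}_{x,y}$ vanishes unless $x$ and $y$ are both $3$-cycles. Hence $\phi^{*}_g\equiv 0$ automatically whenever $g$ is not expressible as a product of two $3$-cycles. A direct enumeration by overlap size $|\text{supp}(x)\cap\text{supp}(y)|\in\{0,1,2,3\}$ shows that the only possible cycle types for $xy$ with $x,y$ $3$-cycles are: identity, $3$-cycle, double transposition, $5$-cycle, or product of two disjoint $3$-cycles. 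Since $5$-cycles are excluded in the statement, four cases remain.

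For each case I would fix a conjugacy class representative $g$ (which suffices by the invariance Lemma~\ref{le:orbitphixyphig}), enumerate factorizations $g=xy$ into $3$-cycles, and use Lemma~\ref{lemma:simplification} to evaluate each $\phi^{*}_{x,y}(e_i,e_j,e_k)$ on a chosen basis triple. Lemma~\ref{le:orbitreduction}, applied with a convenient subgroup $H\leq Z(g)$, reduces the verification to sums over $H$-orbits of factorizations and $H$-orbits of basis triples.

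For $g=1$, factorizations are exactly the pairs $(x,x^{-1})$, so $V^y=V^x$; any basis vector outside $\text{supp}(x)$ lies in $V^x\cap V^y$ and Lemma~\ref{part2} kills the contribution, while the only remaining triple is $\{e_i,\actby{x^{-1}}e_i,\actbylessspace{x^{-2}}e_i\}$, handled by Lemma~\ref{part4}. For $g$ a $3$-cycle, say $g=(123)$, the factorizations arise from $3$-element overlap (which forces $y=x=(132)$, reducing to the identity argument since $V^x=V^y$) and $2$-element overlap (families like $((12k),(23k))$ for $k\geq 4$). For $g$ a double transposition, say $g=(12)(34)$, the factorizations come from $3$-cycle pairs with $2$-element overlap of opposite orientation (e.g.\ $((142),(143))$). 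For $g$ a product of two disjoint $3$-cycles, say $g=(123)(456)$, the factorizations are essentially $(x,y)=((123),(456))$ and variations by inversion; disjointness forces every basis index into $\text{supp}(x)$ or $\text{supp}(y)$, so each basis vector lies in $V^x$ or $V^y$, allowing Lemma~\ref{part1} or~\ref{part2} to eliminate every term.

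The main obstacle is the bookkeeping for the $3$-cycle and double-transposition cases, where individual summands $\phi^{*}_{x,y}(e_i,e_j,e_k)$ produced by Lemma~\ref{part3} are genuinely nonzero and one must verify they cancel across the appropriate orbit of factorizations. The key leverage is the alternating sign $\delta_y(\actby{x}e_i)-\delta_y(\actbylessspace{x^{-1}}e_i)$ in Lemma~\ref{part3}, which, when summed over a $Z(g)$-orbit of factorizations (or an $H$-orbit under a well-chosen subgroup $H$ normalized by the stabilizer of $(x,y)$), produces matched pairs of opposite sign. Choosing $H$ small enough to keep the number of orbit representatives manageable, yet large enough that the pairing is visible, is the crux of making the casework tractable.
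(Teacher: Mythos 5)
Your plan is essentially the paper's proof: the same enumeration of cycle types of products of two $3$-cycles, the same case split over conjugacy class representatives, and the same reliance on Lemmas~\ref{lemma:simplification} and~\ref{le:orbitreduction}. Two details in your description would trip you up if executed literally. First, for $g=(123)(456)$, parts (1) and (2) of Lemma~\ref{lemma:simplification} do \emph{not} eliminate every term: a triple such as $\{e_1,e_4,e_5\}$ (with $x=(123)$, $y=(456)$) has only one vector in $V^{y}$ and none in $V^{x}\cap V^{y}$, so you also need part (3) (where the bracket is $\delta_y(e_2)-\delta_y(e_3)=1-1=0$) and part (4) for $\{e_4,e_5,e_6\}$; the paper instead disposes of this case, and of $g=1$, in one line via the containment $\im\kappa^L_y\subseteq V^x\subseteq\ker\kappa^{*}_x$. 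Second, your expectation that the $3$-cycle and double-transposition cases involve genuinely nonzero summands that must cancel across an orbit is not what happens: in every relevant instance the two indicator values $\delta_y({}^{x}e_i)$ and $\delta_y({}^{x^{-1}}e_i)$ agree (typically both zero, since ${}^{x}e_i$ and ${}^{x^{-1}}e_i$ land in the support of $y$), so each summand vanishes individually by part (3) or (4). Cross-orbit cancellation is only needed later, for the $5$-cycle component in Proposition~\ref{prop:5-cycle} and for the second obstruction. Neither issue is fatal, but the termwise verifications still have to be carried out; as written the proposal stops short of them.
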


\begin{proof} 
	Since $\kappa^{*}_{\tri}$ is supported only on $3$-cycles, we first determine the cycle types that arise as a product $xy$ with $x$ and $y$
	both $3$-cycles.
	Since $\actby{\sigma}x\actby{\sigma}y=\actby{\sigma}(xy)$, it suffices to examine representatives
	of orbits of factor pairs $(x,y)$ under the action of $S_n$ by diagonal conjugation.
	Orbit representatives and their products are
	\begin{equation*}
		\begin{alignedat}{2}
			(123)(456)&, &\qquad (123)(324)&=(124), \\
			(123)(345)&=(12345), &\qquad (123)(123)&=(132), \\
			(123)(234)&=(12)(34), &\qquad (123)(321)&=1.
		\end{alignedat}
	\end{equation*}
	If the cycle type of $g$ does not appear in this list, then certainly $\phi^{*}_g\equiv0$.
	We leave the case $g=(12345)$ to Proposition~\ref{prop:5-cycle} and show here that $\phi^{*}_g \equiv 0$ for $g=1$, $g=(123)(456)$, $g=(12)(34)$, and $g=(123)$, and hence also for their conjugates.
	
	Besides narrowing the set of representative elements $g$ to consider, the list of orbit representatives reveals a way to organize the terms $\phi^{*}_{x,y}$ of $\phi^{*}_g$.
	Specifically, if the cycle type of $g$ occurs with multiplicity $m$ in the list,
	then the factor pairs with product $g$ are in $m$ orbits under the diagonal conjugation action of $Z(g)$, and we can use a representative from each 
	orbit to generate all the terms $\phi^{*}_{x,y}$ needed to evaluate $\phi^{*}_g$.

\begin{case}[{\boldmath $g=1$}]
The identity component $\phi^{*}_1$ of $\phi(\kappa^{*}_{\tri},\kappa^L_{\tri})$ reduces to the sum of terms $\phi^{*}_{x,x^{-1}}$, where $x$ ranges over the set of $3$-cycles in $S_n$.
For each $3$-cycle $x$, we have $\im\kappa^L_{x^{-1}}\subseteq V^{x^{-1}}=V^{x}\subseteq\ker\kappa^{*}_{x}$,
so $\kappa^{*}_x[u,\kappa^L_{x^{-1}}(v,w)]=0$ for all $u,v,w\in V$.
It follows that $\phi^{*}_{x,x^{-1}}\equiv0$ for each $3$-cycle $x\in S_n$,
and hence, $\phi^{*}_1\equiv0$.
\end{case}

\begin{case}[{\boldmath $g=(123)(456)$}]
If $g=(123)(456)$, then the component $\phi^{*}_{g}$ of $\phi(\kappa^{*}_{\tri},\kappa^L_{\tri})$ 
reduces to the sum of two terms, $\phi^{*}_{x,y}+\phi^{*}_{y,x}$,
where $x=(123)$ and $y=(456)$.
Note that $\im\kappa^L_y\subseteq V^x\subseteq\ker\kappa^{*}_x$, so
$\kappa^{*}_x[u,\kappa^L_y(v,w)]=0$ for all $u,v,w\in V$, and the
same holds if $x$ and $y$ are exchanged.
It follows that both terms $\phi^{*}_{x,y}$ and $\phi^{*}_{y,x}$ 
are zero on any triple of vectors, and hence $\phi^{*}_g\equiv0$.
\end{case}

\begin{case}[{\boldmath $g=(12)(34)$}]

Note that $Z((12)(34))=\langle (1324), (12)\rangle\times \Sym_{\{5,\dots,n\}}$, 
 and 
 \begin{multline*}
\phi^{*}_g=\sum_{(x,y)\in\actby{Z(g)}((123),(234))}\phi^{*}_{x,y}=
\phi^{*}_{(123),(234)}+\phi^{*}_{(342),(421)}+\phi^{*}_{(214),(143)}+\phi^{*}_{(431),(312)} \\
+\phi^{*}_{(213),(134)}+\phi^{*}_{(432),(321)}+\phi^{*}_{(124),(243)}+\phi^{*}_{(341),(412)}.
\end{multline*}
 Applying Lemma~\ref{le:orbitreduction} with $H=\langle (1324) \rangle$, which is a normal subgroup of $Z(g)$, yields that to show $\phi^{*}_g \equiv 0$, it is sufficient to prove 
\[\sum_{(x,y)\in\actby{H}((123),(234))}\phi^{*}_{x,y}(e_i,e_j,e_k)=[\phi^{*}_{(123),(234)}+\phi^{*}_{(342),(421)}+\phi^{*}_{(214),(143)}+\phi^{*}_{(431),(312)}](e_i,e_j,e_k)=0\]
for $H$-orbit representatives $\{e_i,e_j,e_k\}$ of the basis triples.
 Since $V^x\cap V^y\subseteq \ker\phi_{x,y}$ holds by Lemma~\ref{part2}, we only consider
 $\{e_i,e_j,e_k\}\subseteq\{e_1, e_2, e_3, e_4\}$, and since the three element subsets of $\{e_1,e_2,e_3,e_4\}$ are in the same $H$-orbit, it suffices to show
\[[\phi^{*}_{(123),(234)}+\phi^{*}_{(342),(421)}+\phi^{*}_{(214),(143)}+\phi^{*}_{(431),(312)}](e_1,e_2,e_3)=0.\]
The first three terms are zero by Lemma~\ref{part3} since $\actby{x}e_i, \actbylessspace{x^{-1}}e_i, \notin V^y$ in all three cases and the fourth term is zero by Lemma~\ref{part4}. 
Hence $\phi^{*}_{(12)(34)}\equiv 0$.
\end{case}

\begin{case}[{\boldmath $g=(123)$}] 
If $g=(123)$, then $Z((123))=\langle (123)\rangle\times \Sym_{\{4,\dots,n\}}$ and
$$\phi^{*}_g
=\phi^{*}_{(132),(132)}
+\sum\limits_{r \in [n]\setminus [3]}
 \phi^{*}_{(12r),(r23)}
+\phi^{*}_{(23r),(r31)}
+\phi^{*}_{(31r),(r12)}.$$
 
We first consider the term $\phi^{*}_{(132),(132)}$ since the factorization $(132)(132)$ is in its own $Z(g)$-orbit under diagonal conjugation.
Note that $\im\kappa^L_{(132)}\subseteq V^{(132)}\subseteq\ker\kappa^{*}_{(132)}$,
so $\kappa^{*}_{(132)}(u,\kappa^{L}_{(132)}(v,w))=0$ for all
$u,v,w\in V$, and thus, $\phi^{*}_{(132),(132)}\equiv0$.

Applying Lemma~\ref{le:orbitreduction} with $H=\langle (123) \rangle$, which is a normal subgroup of $Z(g)$, yields that 
in order to show 
$\sum\limits_{r \in [n]\setminus [3]}
 \phi^{*}_{(12r),(r23)}
+\phi^{*}_{(23r),(r31)}
+\phi^{*}_{(31r),(r12)}=0$,
it is sufficient to prove 
\[\sum_{(x,y)\in\actby{H}((124),(423))}\phi^{*}_{x,y}(e_i,e_j,e_k)=[\phi^{*}_{(124),(423)}+\phi^{*}_{(234),(431)}+\phi^{*}_{(314),(412)}](e_i,e_j,e_k)=0\]
for $H$-representatives $\{e_i,e_j,e_k\}$ of the basis triples.
 By Lemma~\ref{part2} $V^x\cap V^y\subseteq \ker\phi^{*}_{x,y}$, and hence we only consider
 $\{e_i,e_j,e_k\}\subseteq\{e_1, e_2, e_3, e_4\}$.
 The three element subsets of $\{e_1,e_2,e_3,e_4\}$ form two $H$-orbits, with representatives $\{e_1, e_2, e_k\}$ for $k=3$ and $k=4$.
Note that
 \[[\phi^{*}_{(124),(423)}+\phi^{*}_{(234),(431)}+\phi^{*}_{(314),(412)}](e_1,e_2,e_3)=0\]
by Lemma~\ref{part3} applied to all three terms, noting that $\actby{x}e_i, \actbylessspace{x^{-1}}e_i, \notin V^y$ in all cases.
Also
 \[[\phi^{*}_{(124),(423)}+\phi^{*}_{(234),(431)}+\phi^{*}_{(314),(412)}](e_1,e_2,e_4)=0\] 
by Lemma~\ref{part3} with $\actby{x}e_i, \actbylessspace{x^{-1}}e_i, \notin V^y$ applied to the first two terms and Lemma~\ref{part4} applied to the third term.
This verifies that $\phi^{*}_{(123)}\equiv 0$ and completes the proof.
\end{case}
\end{proof}

\begin{proposition}\label{prop:5-cycle}
	Let $\kappa_{\tri}=\kappa^L_{\tri}+\kappa^C_{\tri}$ be as in
	Definition~\ref{def:kappatri}, with parameters $a,b,c\in\CC$, and let
	$\phi_g^{*}$ denote the $g$-component
	of $\phi(\kappa_{\tri}^{*},\kappa_{\tri}^L)$, where $*=L$
	or $*=C$ and $g$ is a $5$-cycle. Then $\phi_g^C\equiv0$.  For $\phi_g^L$, 
	if $e_i\in V^g$, then $\phi_g^L(e_i,e_j,e_k)=0$, and for $1\leq i\leq n$,
	\begin{align*}
		\phi_{g}^L(e_i,\actby{g}e_i,\actbylessspace{g^2}e_i)
		&=2(a-b)^2(e_i+\actby{g}e_i-\actbylessspace{g^{2}}e_i-\actbylessspace{g^{3}}e_i) \hbox{ and } \\
		\phi_{g}^L(e_i,\actby{g}e_i,\actbylessspace{g^{3}}e_i)
		&=2(a-b)^2(-2e_i+2\,\actbylessspace{g^2}e_i+\actbylessspace{g^{3}}e_i-\actbylessspace{g^{4}}e_i).
	\end{align*}
\end{proposition}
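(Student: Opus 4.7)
The plan is to reduce via $S_n$-invariance of $\phi(\kappa^*_{\tri},\kappa^L_{\tri})$ (Lemma~\ref{le:orbitphixyphig}) to a representative 5-cycle $g=(12345)$, enumerate its 3-cycle factorizations, and apply Lemma~\ref{lemma:simplification} to evaluate $\phi^*_g$ on a short list of basis triples. First I would observe that a product of two 3-cycles is a 5-cycle only when the 3-cycles share one element and their combined support has 5 elements, via the pattern $(abc)(cde)=(abcde)$. For $g=(12345)$ this yields the five factorizations
\[
((123),(345)),\ ((234),(451)),\ ((345),(125)),\ ((145),(123)),\ ((125),(234)),
\]
which form a single orbit under diagonal conjugation by $\langle g\rangle\le Z(g)$. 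In particular, the supports of $x$ and $y$ lie in $\{1,\ldots,5\}$ for every such factorization.

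Next I would dispatch the vanishing claims. If $e_i\in V^g$ (equivalently $i\notin\{1,\ldots,5\}$), then $e_i\in V^x\cap V^y$ for every factorization, so Lemma~\ref{part2} forces $\phi^*_{x,y}(e_i,e_j,e_k)=0$, and summing over factorizations gives $\phi^*_g(e_i,e_j,e_k)=0$; the same argument handles any basis triple containing a vector outside $\{e_1,\ldots,e_5\}$. Then by $Z(g)$-invariance, $\phi^*_g$ would be determined by its values on two $\langle g\rangle$-orbit representatives, which I would take to be $(e_1,e_2,e_3)=(e_1,\actby{g}e_1,\actby{g^2}e_1)$ and $(e_1,e_2,e_4)=(e_1,\actby{g}e_1,\actby{g^3}e_1)$; the stated formulas for general $i$ would then follow by applying $\actby{g^{i-1}}$.

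The computational core would be to sum $\phi^*_{x,y}$ over the five factorizations on each of the two representative triples. Many contributions should vanish at sight: Lemma~\ref{part1} applies whenever two triple-vectors lie in $V^y$, and Lemma~\ref{part4} applies to the factorization $((145),(123))$ for the triple $(e_1,e_2,e_3)$ because $(e_1,e_2,e_3)=(e_1,\actby{(123)}e_1,\actby{(123)^2}e_1)$. The remaining factorizations would be evaluated via Lemma~\ref{part3} after a cyclic or transpositional reordering of the triple. In the $C$-case, the surviving contributions should pair up as $\pm 2(a-b)c$ and cancel, delivering $\phi_g^C\equiv 0$. In the $L$-case, expanding via Definition~\ref{def:kappatri} (whose $a$-vs-$b$ split distinguishes basis vectors in the 3-cycle's support from those outside) should cause the $a$- and $b$-pieces to collapse with the outer $(a-b)$ factor from Lemma~\ref{part3}, yielding the claimed $2(a-b)^2$ times the stated linear combination of basis vectors.

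The hardest part will be the sign and reordering bookkeeping: each application of Lemma~\ref{part3} requires recognizing an $e_i\in V^y\setminus V^x$, permuting the triple so that it plays the role of the first argument (and the other two match $(e_j,\actby{y}e_j)$), and tracking the sign from the alternating property. The unifying mechanism I would rely on is that the $(a-b)$ coefficient from Lemma~\ref{part3} combines with the $a$-vs-$b$ asymmetry in $\kappa^L_{\tri}$ to produce the uniform $(a-b)^2$ factor, while the parallel $c+(-c)$ cancellation in the constant case immediately gives $\phi_g^C\equiv 0$.
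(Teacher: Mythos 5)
Your proposal is correct and follows essentially the same route as the paper's proof: reduce to $g=(12345)$ by equivariance, restrict to triples supported on $\{e_1,\ldots,e_5\}$ via Lemma~\ref{part2}, evaluate the sum over the five $3$-cycle factorizations on the two $\langle g\rangle$-orbit representatives $(e_1,e_2,e_3)$ and $(e_1,e_2,e_4)$ using parts (1), (3), (4) of Lemma~\ref{lemma:simplification}, and extend by acting with powers of $g$. Your identification of which lemma parts kill which terms, the $c-c$ cancellation in the constant case, and the $(a-b)\cdot(a-b)$ mechanism producing $2(a-b)^2$ all match the paper's computation.
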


\begin{proof}
	It suffices to evaluate $\phi_g^{*}$ for the conjugacy class representative
	$g=(12345)$, since the results for any conjugate of $g$ can be obtained by the orbit property described in Lemma~\ref{le:orbitphixyphig}.
	Note that $Z(g)=\langle(12345)\rangle\times \Sym_{\{6,\dots,n\}}$, and
	as seen in the proof of Proposition~\ref{prop:zerocases}, the factorizations of $g$ as a product of $3$-cycles are all in the same $Z(g)$-orbit under diagonal conjugation, 
	so
	\[\phi_g^{*}
	=\phi_{(123),(345)}^{*}
	+\phi_{(234),(451)}^{*}
	+\phi_{(345),(512)}^{*}
	+\phi_{(451),(123)}^{*}
	+\phi_{(512),(234)}^{*}.\]
	
	Note that for each pair of $3$-cycles $x$ and $y$ with $xy=g=(12345)$ we have $V^g \subseteq V^x\cap V^y$, and  $V^x\cap V^y\subseteq\ker\phi_{x,y}^{*}$ by Lemma~\ref{part2}, so if any of the vectors in a basis triple lie in $V^g$ then $\phi_{g}^{*}$ is zero on that triple.
	This leaves for further consideration only the cases where
	$\{e_i,e_j,e_k\}\subseteq \{e_1,e_2,e_3,e_4,e_5\}$.
	
	First, consider $\phi_g^{*}(e_1,e_2,e_3)$.  By Lemma~\ref{part1}, the terms $\phi_{(123),(345)}^{*}(e_1,e_2,e_3)$ and $\phi_{(234),(451)}^{*}(e_1,e_2,e_3)$ are both zero.
	By Lemma~\ref{part4}, the term $\phi_{(451),(123)}^{*}(e_1,e_2,e_3)$ is also zero. Applying Lemma~\ref{part3} to the remaining terms yields
	\begin{align*}
	\phi_g^{*}(e_1,e_2,e_3)
	&=2(a-b)(\kappa^{*}_{(512)}(e_1,e_2)-\kappa^{*}_{(345)}(e_3,e_4)).
	\end{align*}
		Recall that $\kappa^C_{(ijk)}(e_i,e_j)=c$, 
		$\kappa^L_{(ijk)}(e_i,e_j)=(a-b)(e_i+e_j+e_k)+b(e_1+\cdots+e_n)$, and that $e_I=\sum_{i \in I} e_i$ for $I\subseteq \{1,\dots,n\}$.  Then
	\begin{align*}
	\phi_g^{*}(e_1,e_2,e_3)&=
	\begin{cases}
	2(a-b)^2(e_{\{5,1,2\}}-e_{\{3,4,5\}}) & \text{if $*=L$}, \\
	0 & \text{if $*=C$}
	\end{cases}\\
	&=
	\begin{cases}
	2(a-b)^2(e_1+e_2-e_3-e_4) & \text{if $*=L$}, \\
	0 & \text{if $*=C$}.
	\end{cases}
	\end{align*}
	
	Next, consider $\phi_g^{*}(e_1,e_2,e_4)$.  By Lemma~\ref{part1}, the term $\phi_{(123),(345)}^{*}(e_1,e_2,e_4)$ is zero.  Using the alternating property to apply Lemma~\ref{part3} to the remaining terms yields
	\begin{align*}
	\phi^{*}_g(e_1,e_2,e_4)
	&=2(a-b)(\kappa^{*}_{(234)}(e_2,e_3)+\kappa^{*}_{(345)}(e_4,e_5)-\kappa^{*}_{(451)}(e_4,e_5)-\kappa^{*}_{(512)}(e_1,e_2))\\
	&=
	\begin{cases}
		2(a-b)^2(e_{\{2,3,4\}}+e_{\{3,4,5\}}-e_{\{4,5,1\}}-e_{\{5,1,2\}}) & \text{if $*=L$}, \\
		0 & \text{if $*=C$}
	\end{cases}\\
	&=
	\begin{cases}
		2(a-b)^2(-2e_1+2e_3+e_4-e_5) & \text{if $*=L$}, \\
		0 & \text{if $*=C$}.
	\end{cases}
	\end{align*}
	
	Finally, note that for any $e_i\not\in V^g$, the values of $\phi_g^{*}(e_i,\actby{g}e_i,\actbylessspace{g^2}e_i)$
	and $\phi_{g}^{*}(e_i,\actby{g}e_i,\actbylessspace{g^{3}}e_i)$ are obtained
	from the cases $\phi_g^{*}(e_1,e_2,e_3)$ and $\phi_g^{*}(e_1,e_2,e_4)$
	by acting by an appropriate power of $g$ and using the orbit property
	in Lemma~\ref{le:orbitphixyphig}.
\end{proof}

The next definition of a map $\kappa^C_{\penta}$ supported only on $5$-cycles is motivated by the requirement
$\phi(\kappa^L_{\tri},\kappa^L_{\tri})=2\psi(\kappa^C)$.
When $G$ is $S_3$ or $S_4$ there are no $5$-cycles and $\kappa^C_{\penta}$ is the zero map.

\begin{definition}\label{def:kappa5cyc}
	For parameters $a,b\in\CC$, define an $S_n$-invariant map $\kappa^C_{\penta}=\sum_{g\in S_n}\kappa_g^Cg$ with component maps $\kappa^C_{g}:\bigwedge^2 V\to \CC$.
	If $g$ is not a $5$-cycle, let $\kappa^C_g\equiv0$.  If $g$ is a $5$-cycle, define $\kappa^C_g$ by the skew-symmetric matrix
    $$[\kappa_g^C]=(a-b)^2([g]-[g]^{T}-2[g^2]+2[g^2]^{T}),$$
    where $[g]$ denotes the matrix of $g$ with respect to the basis $e_1,\ldots,e_n$,
    and the $(i,j)$-entry of $[\kappa^C_g]$ records $\kappa^C_g(e_i,e_j)$.
\end{definition}      

In practice we use the consequences that $V^g\subseteq\ker\kappa^C_g$, and if $e_i\not\in V^g$, then
        $$ 
        \kappa^C_g(e_i,\actby{g}e_i)=-(a-b)^2\quad\text{and}\quad
        \kappa^C_g(e_i,\actbylessspace{g^2}e_i)=2(a-b)^2.
        $$
    Also, $\kappa^C_{\penta}$ is $G$-invariant, i.e., $\kappa^C_{hgh^{-1}}(\actby{h}e_i,\actby{h}e_j)=\kappa^C_g(e_i,e_j)$ for all $h,g\in S_n$ and all $1\leq i,j\leq n$.

\begin{proposition}\label{prop:phiL3L3=2psiC5}
  Let $\kappa^L_{\tri}$ and $\kappa^C_{\fivecyc}$ 
  be as in Definitions~\ref{def:kappatri} and \ref{def:kappa5cyc}, with common
  parameters $a,b\in\CC$.  Then $\phi(\kappa^L_{\tri},\kappa^L_{\tri})=2\psi(\kappa^C_{\fivecyc})$.  
\end{proposition}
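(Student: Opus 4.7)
The plan is to exploit $S_n$-invariance to reduce verification of the identity to a single $5$-cycle, then to just two basis triples. Since $\phi(\kappa^L_{\tri},\kappa^L_{\tri})$ and $\psi(\kappa^C_{\fivecyc})$ are both $S_n$-invariant $3$-cochains (Lemma~\ref{le:orbitphixyphig}), they are determined by their components on a set of conjugacy class representatives. Proposition~\ref{prop:zerocases} gives $\phi^L_g\equiv 0$ for $g$ not a $5$-cycle, and since $\kappa^C_{\fivecyc}$ is supported only on $5$-cycles, $\psi(\kappa^C_{\fivecyc})_g\equiv 0$ for $g$ not a $5$-cycle. Thus it suffices to verify the equality for $g=(12345)$.

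For this $g$, both $\phi^L_g$ and $\psi_g:=\psi(\kappa^C_{\fivecyc})_g$ vanish on any basis triple meeting $V^g$. For $\phi^L_g$ this is part of the conclusion of Proposition~\ref{prop:5-cycle}. For $\psi_g$, if $e_i\in V^g$ then $\actby{g}e_i-e_i=0$; moreover, a direct inspection of the skew-symmetric matrix in Definition~\ref{def:kappa5cyc} shows that every entry $[\kappa^C_g]_{ij}$ with either index fixed by $g$ is zero, so $\kappa^C_g(e_i,e_j)=\kappa^C_g(e_j,e_i)=0$, and hence each summand in $\psi_g(e_i,e_j,e_k)$ vanishes. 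This restricts attention to basis triples $\{e_i,e_j,e_k\}\subseteq\{e_1,\ldots,e_5\}$. These ten triples split into two $\langle g\rangle$-orbits with representatives $\{e_1,e_2,e_3\}$ and $\{e_1,e_2,e_4\}$, so applying Lemma~\ref{le:orbitphixyphig} to both sides with $h\in\langle g\rangle\subseteq Z(g)$ further reduces the verification to these two triples.

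The final step is a direct computation. Using $\kappa^C_g(e_1,e_2)=-(a-b)^2$, $\kappa^C_g(e_2,e_3)=-(a-b)^2$, and $\kappa^C_g(e_3,e_1)=-2(a-b)^2$ from Definition~\ref{def:kappa5cyc}, I would compute
\[
2\psi_g(e_1,e_2,e_3)=2\bigl[\kappa^C_g(e_1,e_2)(e_4-e_3)+\kappa^C_g(e_2,e_3)(e_2-e_1)+\kappa^C_g(e_3,e_1)(e_3-e_2)\bigr]=2(a-b)^2(e_1+e_2-e_3-e_4),
\]
matching the value of $\phi^L_g(e_1,e_2,e_3)$ from Proposition~\ref{prop:5-cycle}. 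An analogous computation on $(e_1,e_2,e_4)$ yields $2(a-b)^2(-2e_1+2e_3+e_4-e_5)$, matching $\phi^L_g(e_1,e_2,e_4)$. The main obstacle here is not conceptual but bookkeeping: one must extract the correct signs and multiplicities from the skew-symmetric matrix description of $[\kappa^C_g]$, noting that the whole definition of $\kappa^C_{\fivecyc}$ was essentially reverse-engineered from Proposition~\ref{prop:5-cycle} to make this verification succeed.
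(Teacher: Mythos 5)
Your proposal is correct and follows essentially the same route as the paper: both arguments dispose of non-$5$-cycle components via Proposition~\ref{prop:zerocases} and the support of $\kappa^C_{\penta}$, handle triples meeting $V^g$ by noting $V^g\subseteq\ker\kappa^C_g$, and then match $\phi^L_g$ against $2\psi_g$ on the two orbit types of triples $(e_i,\actby{g}e_i,\actbylessspace{g^2}e_i)$ and $(e_i,\actby{g}e_i,\actbylessspace{g^{3}}e_i)$, which for $g=(12345)$ are exactly your representatives $\{e_1,e_2,e_3\}$ and $\{e_1,e_2,e_4\}$. Your explicit values of $\kappa^C_g$ and the resulting sums agree with Proposition~\ref{prop:5-cycle}, so the verification is complete.
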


\begin{proof}
	We compare the component $\phi_g$ of $\phi(\kappa^L_{\tri},\kappa^L_{\tri})$
	with the component $2\psi_g$ of $2\psi(\kappa^C_{\fivecyc})$.
	If $g$ is not a $5$-cycle, then $\phi_g\equiv0$ by Proposition~\ref{prop:zerocases}; and $\kappa_{\penta}^C$ is not supported on $g$, so $2\psi_g\equiv0$ also.
	If $g$ is a $5$-cycle, then note that the results of Proposition~\ref{prop:5-cycle} can be written in the form
\begin{align*}
	\phi_{g}(e_i,\actby{g}e_i,\actbylessspace{g^2}e_i)&=-2(a-b)^2((\actby{g}e_i-e_i)+2(\actbylessspace{g^2}e_i-\actby{g}e_i)+(\actbylessspace{g^3}e_i-\actbylessspace{g^2}e_i)),\\
	\phi_{g}(e_i,\actby{g}e_i,\actbylessspace{g^{3}}e_i)&=2(a-b)^2(2(\actby{g}e_i-e_i)+2(\actbylessspace{g^2}e_i-\actby{g}e_i)-(\actbylessspace{g^4}e_i-\actbylessspace{g^3}e_i)),
\end{align*}
while
\begin{align*}
	\psi_{g}(e_i,\actby{g}e_i,\actbylessspace{g^2}e_i)&=
	\kappa_{g}^C(\actby{g}e_i,\actbylessspace{g^2}e_i)(\actby{g}e_i-e_i)+
	\kappa_{g}^C(\actbylessspace{g^2}e_i,e_i)(\actbylessspace{g^2}e_i-\actby{g}e_i)+
	\kappa_{g}^C(e_i,\actby{g}e_i)(\actbylessspace{g^3}e_i-\actbylessspace{g^2}e_i), \\
	\psi_{g}(e_i,\actby{g}e_i,\actbylessspace{g^{3}}e_i)&=
	\kappa_{g}^C(\actby{g}e_i,\actbylessspace{g^3}e_i)(\actby{g}e_i-e_i) +
	\kappa_{g}^C(\actbylessspace{g^3}e_i,e_i)(\actbylessspace{g^2}e_i-\actby{g}e_i) +
	\kappa_{g}^C(e_i,\actby{g}e_i)(\actbylessspace{g^4}e_i-\actbylessspace{g^3}e_i).
\end{align*}

Finally, if $e_i\in V^g$, then $\phi_g(e_i,e_j,e_k)=0$ and
$$
\psi_g(e_i,e_j,e_k)=
\kappa_g^C(e_j,e_k)(\actby{g}e_i-e_i)+
\kappa_g^C(e_k,e_i)(\actby{g}e_j-e_j)+
\kappa_g^C(e_i,e_j)(\actby{g}e_k-e_k)=0,
$$
where the first term vanishes because $\actby{g}e_i-e_i=0$ and
the second two terms vanish because $V^g\subseteq\ker\kappa_g^C$.
Since $\psi$ is alternating, we see that $\psi_g(e_i,e_j,e_k)=0$ whenever $\{e_i,e_j,e_k\}\cap V^g\neq\varnothing$.
\end{proof}

\subsection*{Clearing the Second Obstruction}

The final step in showing $\kappa=\kappa^L_{\tri}+\kappa^C_{\penta}$ is a Drinfeld orbifold algebra map is to verify $\phi(\kappa^C_{\penta},\kappa^L_{\tri})=0$. 

We begin with a lemma that describes simplifications of the summands
$\phi_{x,y}$ of the components $\phi_g$ of $\phi(\kappa^C_{\penta},\kappa^L_{\tri})$.  As in the analogous Lemma~\ref{lemma:simplification}, simplification of $\phi_{x,y}(e_i,e_j,e_k)$
depends on where the vectors in the basis triple lie relative to the fixed spaces $V^x$ and $V^y$.
Recall that for $\sigma\in S_n$ and $v\in V$, $\delta_{\sigma}(v)=1$ 
if $v\in V^{\sigma}$ and $\delta_{\sigma}(v)=0$ otherwise.

\begin{lemma}
	\label{le:simplification53}
    Let $\kappa_{\penta}^C$ and $\kappa^L_{\tri}$ be as in Definitions~\ref{def:kappa5cyc} and~\ref{def:kappatri}, with common
    parameters $a,b\in\CC$.
	Let $\phi_{x,y}$
	denote a summand of the component $\phi_g$ of $\phi(\kappa^C_{\penta},\kappa^L_{\tri})$. Let $x$ be a $5$-cycle and $y$ be a $3$-cycle.
	Let $e_i,e_j,e_k$ be basis vectors.
	\begin{enumerate}[label={(\arabic*)},ref={\thelemma~(\arabic*)}]
		\setlength\itemsep{.5em}
		\item If $e_i,e_j\in V^y$, then $\phi_{x,y}(e_i,e_j,e_k)=0$. \label{53part1}
		\item If $e_i \in V^y\cap V^x$, then $\phi_{x,y}(e_i,e_j,e_k)=0$. \label{53part2}
		\item If $e_i\in V^y\backslash V^x$ and $e_j \not\in V^y$, then 
		\[
		\phi_{x,y}(e_i,e_j,\actby{y}e_j)=2(a-b)^3
		\left[
		\delta_y(\actby{x}e_i)
		-2\delta_y(\actbylessspace{x^2}e_i)
		+2\delta_y(\actbylessspace{x^{-2}}e_i)
		-\delta_y(\actbylessspace{x^{-1}}e_i)\right].
		\] \label{53part3}
		\item If $e_i\not\in V^y$, then $\phi_{x,y}(e_i,\actby{y}e_i,\actbylessspace{y^2}e_i)=0$. \label{53part4}
	\end{enumerate}
	Note that $\phi_{x,y}$ can be evaluated on any basis triple by using the
	alternating property along with these cases.
\end{lemma}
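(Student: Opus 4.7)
The plan is to mirror the proof of Lemma~\ref{lemma:simplification}, since the same three-term formula defining $\phi_{x,y}$ is in play, but now the outer factor $\kappa^C_x$ comes from $\kappa^C_{\penta}$ (a constant $2$-cochain supported on $5$-cycles) rather than from $\kappa^{*}_{\tri}$. Throughout, I would invoke two kernel inclusions: $V^y\subseteq\ker\kappa^L_y$ from Definition~\ref{def:kappatri}, and $V^x\subseteq\ker\kappa^C_x$ from Definition~\ref{def:kappa5cyc}. In addition, the vector $e_{[n]}=e_1+\cdots+e_n$ lies in $V^x$ for every $x\in S_n$, so $\kappa^C_x(\cdot,e_{[n]})=0$.

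Parts~(1) and~(2) are immediate from these kernel facts. If $e_i,e_j\in V^y$, then in each of the three terms of $\phi_{x,y}(e_i,e_j,e_k)$ the inner $\kappa^L_y$ has at least one argument in $V^y$, so all three terms vanish. If $e_i\in V^x\cap V^y$, then the first term vanishes because $e_i+\actby{y}e_i=2e_i\in V^x\subseteq\ker\kappa^C_x$, and the remaining two vanish because their inner $\kappa^L_y$ has $e_i$ as an argument. These are direct analogues of the corresponding parts of Lemma~\ref{lemma:simplification}.

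Part~(3) is where the real work lies. With $e_i\in V^y\setminus V^x$ and $e_j\notin V^y$, only the first of the three terms in $\phi_{x,y}(e_i,e_j,\actby{y}e_j)$ survives (the other two have $e_i\in V^y$ inside $\kappa^L_y$), leaving $2\kappa^C_x(e_i,\kappa^L_y(e_j,\actby{y}e_j))$. Writing $y=(pqr)$ with $j\in\{p,q,r\}$, Definition~\ref{def:kappatri} gives $\kappa^L_y(e_j,\actby{y}e_j)=(a-b)(e_p+e_q+e_r)+be_{[n]}$. Dropping the $be_{[n]}$ piece via the kernel observation above reduces the task to evaluating $2(a-b)\sum_{l\in\{p,q,r\}}\kappa^C_x(e_i,e_l)$. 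Now Definition~\ref{def:kappa5cyc} (together with skew-symmetry) yields $\kappa^C_x(e_i,\actbylessspace{x^k}e_i)\in\{-(a-b)^2,(a-b)^2,2(a-b)^2,-2(a-b)^2\}$ for $k=1,-1,2,-2$ respectively, and zero for all other $k$. The key bookkeeping observation is that for a standard basis vector $v$, membership in the support of $y$ is exactly $1-\delta_y(v)$; applying this with $v=\actbylessspace{x^k}e_i$ and summing, the four constant contributions cancel as $-1+1+2-2=0$, leaving only the four $\delta_y$ terms with the claimed signs.

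For part~(4), the defining property $\kappa^L_y(e_i,\actby{y}e_i)=\kappa^L_y(\actby{y}e_i,\actbylessspace{y^2}e_i)=\kappa^L_y(\actbylessspace{y^2}e_i,e_i)$ collapses the three terms of $\phi_{x,y}(e_i,\actby{y}e_i,\actbylessspace{y^2}e_i)$ into $2\kappa^C_x(u,w)$, where $u=e_i+\actby{y}e_i+\actbylessspace{y^2}e_i$ and $w=(a-b)u+be_{[n]}$. Alternation of $\kappa^C_x$ kills the $(a-b)u$ contribution and $e_{[n]}\in V^x$ kills the rest. The main obstacle is the bookkeeping in part~(3), where the four distinct values from Definition~\ref{def:kappa5cyc} must be matched to the four $\delta_y$ terms with the correct signs and the constant contributions must be checked to cancel; the remaining parts reduce to formal manipulations once the pattern from Lemma~\ref{lemma:simplification} is in hand.
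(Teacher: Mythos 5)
Your proof is correct and follows essentially the same route as the paper, which simply notes that parts (1), (2), and (4) carry over verbatim from Lemma~\ref{lemma:simplification} and that part (3) follows the same reduction before substituting the values of $\kappa^C_x$ from Definition~\ref{def:kappa5cyc}. Your part (3) reorganizes the cancellation slightly (using $e_{[n]}\in V^x$ together with $-1+1+2-2=0$, rather than $\sum_{h\in\langle x\rangle}\actby{h}e_i\in V^x$), but this is an equivalent bookkeeping of the same computation and yields the identical final formula.
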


\begin{proof}
	The proofs of parts (1), (2), and (4) are the same as in the proof of Lemma~\ref{lemma:simplification} since $V^y\subseteq \ker\kappa_y^L$, and $V^x\subseteq\ker\kappa_x^C$ is also true for $\kappa^C_{\penta}$.  The proof of part (3) is the same up until the last step, 
	so
		if $e_i\in V^y\backslash V^x$ and $e_j \not\in V^y$, 
		then
		\begin{align*}
		\phi_{x,y}(e_i,e_j,\actby{y}e_j) 
		&=2(b-a)\sum_{h\in\langle x\rangle}\delta_y(\actby{h}e_i)\kappa_x^C[e_i,\actby{h}e_i]\\
		&=2(a-b)^3\left[
		\delta_y(\actby{x}e_i)
		-2\delta_y(\actbylessspace{x^2}e_i)
		+2\delta_y(\actbylessspace{x^{-2}}e_i)
		-\delta_y(\actbylessspace{x^{-1}}e_i)\right].
		\end{align*}
\end{proof}

The proof of the next proposition uses these simplifications to verify that indeed $\phi(\kappa^C_{\penta},\kappa^L_{\tri})=0$, as mentioned in the outline of the proof of Theorem~\ref{mainexamples}.  This clears the second obstruction.

\begin{proposition}\label{prop:phiC5L3=0}
    Let $\kappa_{\penta}^C$ and $\kappa^L_{\tri}$ be as in Definitions~\ref{def:kappa5cyc} and~\ref{def:kappatri}, with common
    parameters $a,b\in\CC$.
    For every $g\in S_n$, the component $\phi_g$ of $\phi(\kappa_{\penta}^C,\kappa^L_{\tri})$
    is identically zero.
\end{proposition}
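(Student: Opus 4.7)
My plan is to verify $\phi_g \equiv 0$ by case analysis on the conjugacy class of $g$, following the template of Propositions~\ref{prop:zerocases} and~\ref{prop:5-cycle}. Since $\kappa^C_{\penta}$ is supported only on $5$-cycles and $\kappa^L_{\tri}$ only on $3$-cycles, the summand $\phi_{x,y}$ can be nonzero only when $x$ is a $5$-cycle and $y$ is a $3$-cycle; hence $\phi_g \equiv 0$ trivially whenever $g$ admits no such factorization. Organizing the remaining cases by the overlap size $k = |S(x)\cap S(y)|$ of the supports of $x$ and $y$, and using parity to rule out odd cycle types (such as a $6$-cycle or a $(3,2)$), the cycle types of $g = xy$ that require attention are $(5,3)$ for $k=0$; a $7$-cycle for $k=1$ (the unique overlap element cannot be fixed since $y(i)\notin S(x)$ while $x^{-1}(i)\in S(x)$); one of $(4,2)$, $(3,3)$, or a $5$-cycle for $k=2$; and one of a $5$-cycle, a $3$-cycle, or $(2,2)$ for $k=3$.

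For each such conjugacy class, I would pick a representative $g$, enumerate the factorizations $(x,y)$ with $xy=g$, $x$ a $5$-cycle, and $y$ a $3$-cycle, and partition them into $Z(g)$-orbits under diagonal conjugation. Choosing one representative $(x,y)$ per orbit, I would apply Lemma~\ref{le:orbitreduction} with a subgroup $H\leq Z(g)$ normalized by $K = Z(x)\cap Z(y)$ (typically a cyclic subgroup generated by $g$, by $x$, or by a natural rotational symmetry of the factorization), together with a small set $\mathcal{B}_H$ of $H$-orbit representatives among basis triples of $V$. For each triple in $\mathcal{B}_H$, Lemma~\ref{le:simplification53} reduces $\phi_{x',y'}(e_i,e_j,e_k)$ either to zero (by parts (1), (2), or (4)) or to $2(a-b)^3$ times an indicator expression (by part (3)), whereupon one checks that the sum over the $H$-orbit of factorizations vanishes.

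In the cases with $|S(g)| \geq 6$, namely $g$ of cycle type $(5,3)$, $7$-cycle, $(4,2)$, or $(3,3)$, the support of $g$ is large enough that most basis triples are killed outright by parts (1), (2), or (4) of Lemma~\ref{le:simplification53} through forced containments in $V^x$ or $V^y$, and the surviving part (3) contributions pair off with opposite indicator signs under the action of $H$. The main obstacle I anticipate is the cases where $|S(g)| \leq 5$, namely $g$ a $5$-cycle, $3$-cycle, or $(2,2)$: then $S(g) \subseteq S(x)$ for the contributing factorizations, several $Z(g)$-orbits of factorizations appear simultaneously, and the indicator pattern $\delta_y(\actby{x} e_i) - 2\delta_y(\actbylessspace{x^2} e_i) + 2\delta_y(\actbylessspace{x^{-2}} e_i) - \delta_y(\actbylessspace{x^{-1}} e_i)$ from Lemma~\ref{le:simplification53}(3) requires careful tracking of which $\langle x\rangle$-orbits of basis vectors meet $V^y$. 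For these cases I expect the right choice to be $H = \langle g \rangle$ or $H = \langle x \rangle$, keeping the number of $H$-orbit representatives of both factorizations and basis triples small and exposing the requisite cancellations in the signed indicator sums.
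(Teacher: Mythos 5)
Your plan is methodologically identical to the paper's proof: reduce to factor pairs $(x,y)$ with $x$ a $5$-cycle and $y$ a $3$-cycle, enumerate the possible cycle types of $g=xy$ (your list by overlap size $k$ reproduces exactly the paper's seven cases: type $(5,3)$, a $7$-cycle, $(4,2)$, $(3,3)$, a $5$-cycle, $(2,2)$, and a $3$-cycle), then for each conjugacy class apply Lemma~\ref{le:orbitreduction} with a suitable $H\leq Z(g)$ and evaluate the surviving terms via Lemma~\ref{le:simplification53}. You also correctly anticipate where the work concentrates: when $|S(g)|\leq 5$ the factorizations fall into several $Z(g)$-orbits (three for $g$ a $5$-cycle) and the signed indicator pattern of Lemma~\ref{53part3} must be tracked carefully.

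The one substantive caveat is that your proposal stops at the point where the paper's proof actually begins to carry weight: the claim that ``the surviving part (3) contributions pair off with opposite indicator signs'' is asserted, not verified, and it is not automatic --- the cancellation depends delicately on the coefficients $1,-2,2,-1$ built into $\kappa^C_{\penta}$, which were reverse-engineered from the first obstruction and have no a priori reason to clear the second. The paper devotes five explicit tables (for the $7$-cycle, $(4,2)$, $(3,3)$, $5$-cycle, $(2,2)$, and $3$-cycle cases) to checking that every column sum vanishes. Until those computations are actually performed for your chosen $H$ and orbit representatives, the proof is a correct and complete reduction but not yet a proof of the proposition.
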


\begin{proof}
	Since $\kappa^C_{\penta}$ is supported only on $5$-cycles and $\kappa^L_{\tri}$ is supported only on $3$-cycles, we first determine the cycle types that arise as a product $xy$ with $x$ a $5$-cycle and $y$ a $3$-cycle.
	Since $\actby{\sigma}x\actby{\sigma}y=\actby{\sigma}(xy)$, it suffices to examine representatives
	of orbits of factor pairs $(x,y)$ under the action of $S_n$ by diagonal conjugation.
	Orbit representatives and their products are
	\begin{equation*}
		\begin{alignedat}{2}
(12345)(678), &&\qquad (12345)(567)&=(1234567),   \\
(12345)(456)&=(1234)(56), &\qquad (12345)(546)&=(12346), \\
(12345)(356)&=(123)(456), &\qquad (12345)(536)&=(1236)(45), \\
(12345)(345)&=(12354), &\qquad (12345)(543)&=(123), \\
(12345)(245)&=(12534), &\qquad (12345)(542)&=(12)(34).
		\end{alignedat}
	\end{equation*}
	If the cycle type of $g$ does not appear in this list, then certainly $\phi_g\equiv0$.
	We show further that $\phi_g \equiv 0$ for $g=(12345)(678)$, $g=(1234567)$, $g=(1234)(56)$, $g=(123)(456)$, $g=(12345)$, $g=(12)(34)$, and $g=(123)$, and hence also for their conjugates.
	
	Besides narrowing the set of representative elements $g$ to consider, the list of orbit representatives reveals a way to organize the terms $\phi_{x,y}$ of $\phi_g$.
	Specifically, if the cycle type of $g$ occurs with multiplicity $m$ in the list,
	then the factor pairs with product $g$ are in $m$ orbits under the diagonal conjugation action of $Z(g)$, and we can use a representative from each
	orbit to generate all the terms $\phi_{x,y}$ needed to evaluate $\phi_g$.
 
\setcounter{case}{0}
 \begin{case}[{\boldmath $g=(12345)(678)$}]  
 	Note that $g$ has a unique factorization as a product of a $5$-cycle and a $3$-cycle. To show $\phi_g=\phi_{(12345),(678)}\equiv0$,
 we apply Lemma~\ref{le:orbitreduction} with $H=Z(g)=\langle (12345),(678)\rangle\times \Sym_{\{9,\ldots,n\}}$.
 In particular, it suffices to evaluate $\phi_g$ on $Z(g)$-orbit representatives of the basis triples.  Since $V^x\cap V^y\subseteq \ker\phi_{x,y}$, we only consider
 $\{e_i,e_j,e_k\}\subseteq\{e_l\mid 1\leq l\leq 8\}$. If $\{e_i,e_j,e_k\}$ does not contain
 at least two elements from $\{e_6,e_7,e_8\}$, then $\phi_{x,y}(e_i,e_j,e_k)=0$ by Lemma~\ref{53part1}.
 The remaining triples partition into $Z(g)$-orbits
 $$
 \actby{Z(g)}\{e_1,e_6,e_7\}  \quad\text{and}\quad 
 \actby{Z(g)}\{e_6,e_7,e_8\}.
 $$
 Note that
 $\phi_{x,y}(e_1,e_6,e_7)=0$ by Lemma~\ref{53part3} and
 $\phi_{x,y}(e_6,e_7,e_8)=0$ by Lemma~\ref{53part4}. 
 
 \end{case}
 
 \begin{case}[{\boldmath $g=(1234567)$}]  Note that $Z(g)=\langle (1234567)\rangle\times \Sym_{\{8,\ldots,n\}}$.
 The factorizations of $g$ as a product of a $5$-cycle and a $3$-cycle are in a single $Z(g)$-orbit.
 To show $\phi_g \equiv 0$, it suffices to verify 
 $$
 \sum_{(x,y)\in\actby{Z(g)}((12345),(567))}\phi_{x,y}(e_i,e_j,e_k)=0
 $$
 for $Z(g)$-orbit representatives $\{e_i,e_j,e_k\}$ of the basis triples.
 Since $V^x\cap V^y\subseteq \ker\phi_{x,y}$, we only consider
 $\{e_i,e_j,e_k\}\subseteq\{e_l\mid 1\leq l\leq 7\}$.  The remaining triples partition into $Z(g)$-orbits
 $$
 \actbylessspace{Z(g)}\{e_1,e_2,e_3\}, \quad
 \actbylessspace{Z(g)}\{e_1,e_2,e_4\}, \quad
 \actbylessspace{Z(g)}\{e_1,e_2,e_5\}, \quad
 \actbylessspace{Z(g)}\{e_1,e_2,e_6\}, \quad\text{and}\quad 
 \actbylessspace{Z(g)}\{e_1,e_3,e_5\}.
 $$
 (The sum of orbit sizes is $7+7+7+7+7=\binom{7}{3}$.)
 We simplify each remaining term $\phi_{x,y}(e_i,e_j,e_k)$ of the sum
 and record the results in a table with rows indexed by the elements of $\actby{Z(g)}((12345),(567))$.
 Notice $\phi_{(4 5 6 7 1),(1 2 3)}(e_1,e_2,e_3)=0$ by Lemma~\ref{53part4}, all other zero entries result from Lemma~\ref{53part1}, and all remaining entries are found using Lemma~\ref{53part3}.
 Each column sum is zero.
 $$
 {\renewcommand{\arraystretch}{1.5}
 	\begin{array}{cccccc}
 	\hline
 	x,y & \phi_{x,y}(e_1,e_2,e_3) & \phi_{x,y}(e_1,e_2,e_4) & \phi_{x,y}(e_1,e_2,e_5) & \phi_{x,y}(e_1,e_2,e_6) & \phi_{x,y}(e_1,e_3,e_5) \\
 	\hline
 	(1 2 3 4 5),(5 6 7) & \pdftooltip{0}{By Lemma~\ref{53part1}} & \pdftooltip{0}{By Lemma~\ref{53part1}}  & \pdftooltip{0}{By Lemma~\ref{53part1}} & \pdftooltip{0}{By Lemma~\ref{53part1}} & \pdftooltip{0}{By Lemma~\ref{53part1}} \\
 	
 	(2 3 4 5 6),(6 7 1) & \pdftooltip{0}{By Lemma~\ref{53part1}} & \pdftooltip{0}{By Lemma~\ref{53part1}} & \pdftooltip{0}{By Lemma~\ref{53part1}} & \pdftooltip{-2 (a-b)^3}{By Lemma~\ref{53part3}} & \pdftooltip{0}{By Lemma~\ref{53part1}} \\
 	
 	(3 4 5 6 7),(7 1 2) & \pdftooltip{\phantom{-}2 (a-b)^3}{By Lemma~\ref{53part3}} & \pdftooltip{-4 (a-b)^3}{By Lemma~\ref{53part3}} & \pdftooltip{\phantom{-}4 (a-b)^3}{By Lemma~\ref{53part3}} & \pdftooltip{-2 (a-b)^3}{By Lemma~\ref{53part3}} & \pdftooltip{0}{By Lemma~\ref{53part1}} \\
 	
 	(4 5 6 7 1),(1 2 3) & \pdftooltip{\color{green}0}{By Lemma~\ref{53part4}} & \pdftooltip{\phantom{-}2 (a-b)^3}{By Lemma~\ref{53part3}} & \pdftooltip{-4 (a-b)^3}{By Lemma~\ref{53part3}} & \pdftooltip{\phantom{-}4 (a-b)^3}{By Lemma~\ref{53part3}} & \pdftooltip{\phantom{-}4 (a-b)^3}{By Lemma~\ref{53part3}} \\
 	
 	(5 6 7 1 2),(2 3 4) & \pdftooltip{-2 (a-b)^3}{By Lemma~\ref{53part3}} & \pdftooltip{\phantom{-}2 (a-b)^3}{By Lemma~\ref{53part3}} & \pdftooltip{0}{By Lemma~\ref{53part1}} & \pdftooltip{0}{By Lemma~\ref{53part1}} & \pdftooltip{0}{By Lemma~\ref{53part1}} \\
 	
 	(6 7 1 2 3),(3 4 5) & \pdftooltip{0}{By Lemma~\ref{53part1}} & \pdftooltip{0}{By Lemma~\ref{53part1}} & \pdftooltip{0}{By Lemma~\ref{53part1}} & \pdftooltip{0}{By Lemma~\ref{53part1}} & \pdftooltip{-4 (a-b)^3}{By Lemma~\ref{53part3}} \\
 	
 	(7 1 2 3 4),(4 5 6) & \pdftooltip{0}{By Lemma~\ref{53part1}} & \pdftooltip{0}{By Lemma~\ref{53part1}} & \pdftooltip{0}{By Lemma~\ref{53part1}} & \pdftooltip{0}{By Lemma~\ref{53part1}} & \pdftooltip{0}{By Lemma~\ref{53part1}} \\
 	\hline
 	\end{array}}
 $$
 \end{case}
 \begin{case}[{\boldmath $g=(1234)(56)$}]  Note that $Z(g)=\langle (1234),(56)\rangle\times \Sym_{\{7,\ldots,n\}}$.
 The factorizations of $g$ as a product of a $5$-cycle and a $3$-cycle are in two $Z(g)$-orbits
 $$
 \actby{Z(g)}((12345),(564))\quad\text{and}\quad
 \actby{Z(g)}((12356),(634)).
 $$
 To show $\phi_g \equiv 0$, it suffices to verify 
 $$
 \sum_{(x,y)\in\actby{Z(g)}((12345),(564))}\phi_{x,y}(e_i,e_j,e_k)+
 \sum_{(x,y)\in\actby{Z(g)}((12356),(634))}\phi_{x,y}(e_i,e_j,e_k)=0
 $$
 for $Z(g)$-orbit representatives $\{e_i,e_j,e_k\}$ of the basis triples.
 Since $V^x\cap V^y\subseteq \ker\phi_{x,y}$, we only consider
 $\{e_i,e_j,e_k\}\subseteq\{e_l\mid 1\leq l\leq 6\}$.  The remaining triples partition into $Z(g)$-orbits
 $$
 \actby{Z(g)}\{e_1,e_2,e_3\}, \quad
 \actby{Z(g)}\{e_1,e_2,e_5\}, \quad
 \actby{Z(g)}\{e_1,e_3,e_5\}, \quad\text{and}\quad 
 \actby{Z(g)}\{e_1,e_5,e_6\}.
 $$
 (The sum of orbit sizes is $4+8+4+4=\binom{6}{3}$.)
 We simplify each term $\phi_{x,y}(e_i,e_j,e_k)$ of the sum
 and record the results in a table with rows indexed by the elements of 
 $\actby{Z(g)}((12345),(564))\cup\actby{Z(g)}((12356),(634))$.
 Each zero entry is color-coded and tagged by the applicable part of Lemma~\ref{le:simplification53}, all nonzero entries are found using Lemma~\ref{53part3}, and each column sum is zero.
 $$
 {\renewcommand{\arraystretch}{1.5}
 	\begin{array}{ccccc}
 	\hline
 	x,y & \phi_{x,y}(e_1,e_2,e_3) & \phi_{x,y}(e_1,e_2,e_5) & \phi_{x,y}(e_1,e_3,e_5) & \phi_{x,y}(e_1,e_5,e_6) \\
 	\hline
 	(1 2 3 4 5),(4 5 6) & \pdftooltip{0}{By Lemma~\ref{53part1}} & \pdftooltip{0}{By Lemma~\ref{53part1}} & \pdftooltip{0}{By Lemma~\ref{53part1}} & -2 (a-b)^3 \\
 	
 	(2 3 4 1 5),(1 5 6) & \pdftooltip{0}{By Lemma~\ref{53part1}} & \phantom{-}2 (a-b)^3 & \pdftooltip{\color{red}0}{By Lemma~\ref{53part3}} & \pdftooltip{\color{green}0}{By Lemma~\ref{53part4}} \\
 	
 	(3 4 1 2 5),(2 5 6) & \pdftooltip{0}{By Lemma~\ref{53part1}} & \phantom{-}2 (a-b)^3 & \pdftooltip{0}{By Lemma~\ref{53part1}} & \phantom{-}2 (a-b)^3 \\
 	
 	(4 1 2 3 5),(3 5 6) & \pdftooltip{0}{By Lemma~\ref{53part1}} & \pdftooltip{0}{By Lemma~\ref{53part1}} & \pdftooltip{\color{red}0}{By Lemma~\ref{53part3}} & \pdftooltip{\color{red}0}{By Lemma~\ref{53part3}} \\
 	
 	(1 2 3 4 6),(4 6 5) & \pdftooltip{0}{By Lemma~\ref{53part1}} & \pdftooltip{0}{By Lemma~\ref{53part1}} & \pdftooltip{0}{By Lemma~\ref{53part1}} & \phantom{-}2 (a-b)^3 \\
 	
 	(2 3 4 1 6),(1 6 5) & \pdftooltip{0}{By Lemma~\ref{53part1}} & -2 (a-b)^3 & \pdftooltip{\color{red}0}{By Lemma~\ref{53part3}} & \pdftooltip{\color{green}0}{By Lemma~\ref{53part4}} \\
 	
 	(3 4 1 2 6),(2 6 5) & \pdftooltip{0}{By Lemma~\ref{53part1}} & -2 (a-b)^3 & \pdftooltip{0}{By Lemma~\ref{53part1}} & -2 (a-b)^3 \\
 	
 	(4 1 2 3 6),(3 6 5) & \pdftooltip{0}{By Lemma~\ref{53part1}} & \pdftooltip{0}{By Lemma~\ref{53part1}} & \pdftooltip{\color{red}0}{By Lemma~\ref{53part3}} & \pdftooltip{\color{red}0}{By Lemma~\ref{53part3}} \\
 	\hline
 	(1 2 3 5 6),(3 4 6) & \pdftooltip{0}{By Lemma~\ref{53part1}} & \pdftooltip{0}{By Lemma~\ref{53part1}} & \pdftooltip{0}{By Lemma~\ref{53part1}} & \pdftooltip{0}{By Lemma~\ref{53part1}} \\
 	
 	(2 3 4 5 6),(4 1 6) & \pdftooltip{0}{By Lemma~\ref{53part1}} & \pdftooltip{0}{By Lemma~\ref{53part1}} & \pdftooltip{0}{By Lemma~\ref{53part1}} & \pdftooltip{\color{red}0}{By Lemma~\ref{53part3}} \\
 	
 	(3 4 1 5 6),(1 2 6) & \phantom{-}6 (a-b)^3 & \pdftooltip{\color{red}0}{By Lemma~\ref{53part3}} & \pdftooltip{0}{By Lemma~\ref{53part1}} & \pdftooltip{\color{red}0}{By Lemma~\ref{53part3}} \\
 	
 	(4 1 2 5 6),(2 3 6) & -6 (a-b)^3 & \pdftooltip{0}{By Lemma~\ref{53part1}} & \pdftooltip{0}{By Lemma~\ref{53part1}} & \pdftooltip{0}{By Lemma~\ref{53part1}} \\
 	
 	(1 2 3 6 5),(3 4 5) & \pdftooltip{0}{By Lemma~\ref{53part1}} & \pdftooltip{0}{By Lemma~\ref{53part1}} & -6 (a-b)^3 & \pdftooltip{0}{By Lemma~\ref{53part1}} \\
 	
 	(2 3 4 6 5),(4 1 5) & \pdftooltip{0}{By Lemma~\ref{53part1}} & -6 (a-b)^3 & \phantom{-}6 (a-b)^3 & \pdftooltip{\color{red}0}{By Lemma~\ref{53part3}} \\
 	
 	(3 4 1 6 5),(1 2 5) & \phantom{-}6 (a-b)^3 & \pdftooltip{\color{green}0}{By Lemma~\ref{53part4}} & \phantom{-}6 (a-b)^3 & \pdftooltip{\color{red}0}{By Lemma~\ref{53part3}} \\
 	
 	(4 1 2 6 5),(2 3 5) & -6 (a-b)^3 & \phantom{-}6 (a-b)^3 & -6 (a-b)^3 & \pdftooltip{0}{By Lemma~\ref{53part1}} \\
 	\hline
 	\end{array}}
 $$
 \end{case}
 \begin{case}[{\boldmath $g=(123)(456)$}]  Note that $Z(g)=\langle (123),(456),(14)(25)(36)\rangle\times \Sym_{\{7,\ldots,n\}}$, 
 and the subgroup
 $$
 H=\langle (123),(456)\rangle
 $$
 is normal in $Z(g)$.
 The factorizations of $g$ as a product of a $5$-cycle and a $3$-cycle are in a single $Z(g)$-orbit (of size $18$), which partitions into two $H$-orbits (of size $9$)
 $$
 \actby{Z(g)}((12345),(563))=\actby{H}((12345),(563))\cup\actby{H}((45612),(236)).
 $$
 To show $\phi_g \equiv 0$, it suffices to verify 
 $$
 \sum_{(x,y)\in\actby{H}((12345),(563))}\phi_{x,y}(e_i,e_j,e_k)=0
 $$
 for $H$-orbit representatives $\{e_i,e_j,e_k\}$ of the basis triples.
 Since $V^x\cap V^y\subseteq \ker\phi_{x,y}$, we only consider
 $\{e_i,e_j,e_k\}\subseteq\{e_l\mid 1\leq l\leq 6\}$.  
 The remaining triples partition into $H$-orbits
 $$
 \actby{H}\{e_1,e_2,e_4\}, \quad
 \actby{H}\{e_1,e_4,e_5\}, \quad
 \actby{H}\{e_1,e_2,e_3\}, \quad\text{and}\quad 
 \actby{H}\{e_4,e_5,e_6\}.
 $$
 (The sum of orbit sizes is $9+9+1+1=\binom{6}{3}$.)
 We simplify each term $\phi_{x,y}(e_i,e_j,e_k)$ of the sum
 and record the results in a table with rows indexed by the elements of $\actby{H}((12345),(563))$.
 Each zero entry is color-coded and tagged by the applicable part of Lemma~\ref{le:simplification53}, all nonzero entries are found using Lemma~\ref{53part3}, and each column sum is zero.
 $$
 {\renewcommand{\arraystretch}{1.5}
 	\begin{array}{ccccc}
 	\hline
 	x,y & \phi_{x,y}(e_1,e_2,e_3) & \phi_{x,y}(e_4,e_5,e_6) & \phi_{x,y}(e_1,e_2,e_4) & \phi_{x,y}(e_1,e_4,e_5) \\
 	\hline
 	(1 2 3 4 5),(5 6 3) & \pdftooltip{0}{By Lemma~\ref{53part1}} & \pdftooltip{\color{red}0}{By Lemma~\ref{53part3}} & \pdftooltip{0}{By Lemma~\ref{53part1}} & \pdftooltip{0}{By Lemma~\ref{53part1}} \\
 	
 	(1 2 3 5 6),(6 4 3) & \pdftooltip{0}{By Lemma~\ref{53part1}} & \pdftooltip{\color{red}0}{By Lemma~\ref{53part3}} & \pdftooltip{0}{By Lemma~\ref{53part1}} & \pdftooltip{0}{By Lemma~\ref{53part1}} \\
 	
 	(1 2 3 6 4),(4 5 3) & \pdftooltip{0}{By Lemma~\ref{53part1}} & \pdftooltip{\color{red}0}{By Lemma~\ref{53part3}} & \pdftooltip{0}{By Lemma~\ref{53part1}} & \phantom{-}6 (a-b)^3 \\
 	
 	(2 3 1 4 5),(5 6 1) & \pdftooltip{0}{By Lemma~\ref{53part1}} & \pdftooltip{\color{red}0}{By Lemma~\ref{53part3}} & \pdftooltip{0}{By Lemma~\ref{53part1}} & \pdftooltip{\color{red}0}{By Lemma~\ref{53part3}} \\
 	
 	(2 3 1 5 6),(6 4 1) & \pdftooltip{0}{By Lemma~\ref{53part1}} & \pdftooltip{\color{red}0}{By Lemma~\ref{53part3}} & \phantom{-}6 (a-b)^3 & \pdftooltip{\color{red}0}{By Lemma~\ref{53part3}} \\
 	
 	(2 3 1 6 4),(4 5 1) & \pdftooltip{0}{By Lemma~\ref{53part1}} & \pdftooltip{\color{red}0}{By Lemma~\ref{53part3}} & -6 (a-b)^3 & \pdftooltip{\color{green}0}{By Lemma~\ref{53part4}} \\
 	
 	(3 1 2 4 5),(5 6 2) & \pdftooltip{0}{By Lemma~\ref{53part1}} & \pdftooltip{\color{red}0}{By Lemma~\ref{53part3}} & \pdftooltip{0}{By Lemma~\ref{53part1}} & \pdftooltip{0}{By Lemma~\ref{53part1}} \\
 	
 	(3 1 2 5 6),(6 4 2) & \pdftooltip{0}{By Lemma~\ref{53part1}} & \pdftooltip{\color{red}0}{By Lemma~\ref{53part3}} & \phantom{-}6 (a-b)^3 & \pdftooltip{0}{By Lemma~\ref{53part1}} \\
 	
 	(3 1 2 6 4),(4 5 2) & \pdftooltip{0}{By Lemma~\ref{53part1}} & \pdftooltip{\color{red}0}{By Lemma~\ref{53part3}} & -6 (a-b)^3 & -6 (a-b)^3 \\
 	\hline
 	\end{array}}
 $$
 \end{case}
 \begin{case}[{\boldmath $g=(12345)$}]  Note that $Z(g)=H\times \Sym_{\{6,\ldots,n\}}$,
 where $H$ is the normal subgroup
 $$
 H=\langle (12345)\rangle.
 $$
 The factorizations of $g$ as a product of a $5$-cycle and a $3$-cycle are in three $Z(g)$-orbits
 $$
 \actby{Z(g)}((12354),(354)), \quad
 \actby{Z(g)}((12534),(254)),\quad\text{and}\quad
 \actby{Z(g)}((12346),(645)),
 $$
 which partition into $H$-orbits
 \begin{align*}
 \actby{Z(g)}((12354),(354)) &=\actby{H}((12354),(354)), \\
 \actby{Z(g)}((12534),(254)) &=\actby{H}((12534),(254)),\quad\text{and}\\
 \actby{Z(g)}((12346),(645)) &=\bigcup_{r\geq6}\actby{H}((1234r),(r45)).
 \end{align*}
 To show $\phi_g \equiv 0$, it suffices to verify 
 \begin{eqnarray*}
 	& & \sum_{(x,y)\in\actby{H}((12354),(354))}\phi_{x,y}(e_i,e_j,e_k) \\
 	& + & \sum_{(x,y)\in\actby{H}((12534),(254))}\phi_{x,y}(e_i,e_j,e_k) \\
 	& + & \sum_{(x,y)\in\actby{H}((12346),(645))}\phi_{x,y}(e_i,e_j,e_k)=0
 \end{eqnarray*}
 for $H$-orbit representatives $\{e_i,e_j,e_k\}$ of the basis triples.
 Since $V^x\cap V^y\subseteq \ker\phi_{x,y}$, we only consider
 $\{e_i,e_j,e_k\}\subseteq\{e_l\mid 1\leq l\leq 6\}$.  
 The remaining triples partition into $H$-orbits
 $$
 \actby{H}\{e_1,e_2,e_3\}, \quad
 \actby{H}\{e_1,e_2,e_4\}, \quad
 \actby{H}\{e_1,e_2,e_6\}, \quad\text{and}\quad 
 \actby{H}\{e_1,e_3,e_6\}.
 $$
 (The sum of orbit sizes is $5+5+5+5=\binom{6}{3}$.)
 We simplify each term $\phi_{x,y}(e_i,e_j,e_k)$ of the sum
 and record the results in a table with rows indexed by the elements of 
 $\actby{Z(g)}((12354),(354))\cup\actby{Z(g)}((12534),(254))\cup\actby{Z(g)}((12346),(645))$.
 Each zero entry is color-coded and tagged by the applicable part of Lemma~\ref{le:simplification53}, all nonzero entries are found using Lemma~\ref{53part3}, and each column sum is zero.
 $$
 {\renewcommand{\arraystretch}{1.5}
 	\begin{array}{ccccc}
 	\hline
 	x,y & \phi_{x,y}(e_1,e_2,e_3) & \phi_{x,y}(e_1,e_2,e_4) & \phi_{x,y}(e_1,e_2,e_6) & \phi_{x,y}(e_1,e_3,e_6) \\
 	\hline
 	(1 2 3 5 4),(3 5 4) & \pdftooltip{0}{By Lemma~\ref{53part1}} & \pdftooltip{0}{By Lemma~\ref{53part1}} & \pdftooltip{0}{By Lemma~\ref{53part1}} & \pdftooltip{0}{By Lemma~\ref{53part1}} \\
 	
 	(2 3 4 1 5),(4 1 5) & \pdftooltip{0}{By Lemma~\ref{53part1}} & \phantom{-}2 (a-b)^3 & \pdftooltip{0}{By Lemma~\ref{53part1}} & \pdftooltip{0}{By Lemma~\ref{53part1}} \\
 	
 	(3 4 5 2 1),(5 2 1) & -2 (a-b)^3 & \phantom{-}2 (a-b)^3 & \pdftooltip{\color{blue}0}{By Lemma~\ref{53part2}} & \pdftooltip{0}{By Lemma~\ref{53part1}} \\
 	
 	(4 5 1 3 2),(1 3 2) & \pdftooltip{\color{green}0}{By Lemma~\ref{53part4}} & -2 (a-b)^3 & \pdftooltip{\color{blue}0}{By Lemma~\ref{53part2}} & \pdftooltip{\color{blue}0}{By Lemma~\ref{53part2}} \\
 	
 	(5 1 2 4 3),(2 4 3) & \phantom{-}2 (a-b)^3 & -2 (a-b)^3 & \pdftooltip{0}{By Lemma~\ref{53part1}} & \pdftooltip{0}{By Lemma~\ref{53part1}} \\
 	\hline
 	(1 2 5 3 4),(2 5 4) & \pdftooltip{0}{By Lemma~\ref{53part1}} & -4 (a-b)^3 & \pdftooltip{0}{By Lemma~\ref{53part1}} & \pdftooltip{0}{By Lemma~\ref{53part1}} \\
 	
 	(2 3 1 4 5),(3 1 5) & \phantom{-}4 (a-b)^3 & \pdftooltip{0}{By Lemma~\ref{53part1}} & \pdftooltip{0}{By Lemma~\ref{53part1}} & \pdftooltip{\color{blue}0}{By Lemma~\ref{53part2}} \\
 	
 	(3 4 2 5 1),(4 2 1) & -4 (a-b)^3 & \pdftooltip{\color{green}0}{By Lemma~\ref{53part4}} & \pdftooltip{\color{blue}0}{By Lemma~\ref{53part2}} & \pdftooltip{0}{By Lemma~\ref{53part1}} \\
 	
 	(4 5 3 1 2),(5 3 2) & \phantom{-}4 (a-b)^3 & \pdftooltip{0}{By Lemma~\ref{53part1}} & \pdftooltip{0}{By Lemma~\ref{53part1}} & \pdftooltip{0}{By Lemma~\ref{53part1}} \\
 	
 	(5 1 4 2 3),(1 4 3) & -4 (a-b)^3 & \phantom{-}4 (a-b)^3 & \pdftooltip{0}{By Lemma~\ref{53part1}} & \pdftooltip{\color{blue}0}{By Lemma~\ref{53part2}} \\
 	\hline
 	(1 2 3 4 6),(6 4 5) & \pdftooltip{0}{By Lemma~\ref{53part1}} & \pdftooltip{0}{By Lemma~\ref{53part1}} & \pdftooltip{0}{By Lemma~\ref{53part1}} & \pdftooltip{0}{By Lemma~\ref{53part1}} \\
 	
 	(2 3 4 5 6),(6 5 1) & \pdftooltip{0}{By Lemma~\ref{53part1}} & \pdftooltip{0}{By Lemma~\ref{53part1}} & \phantom{-}2 (a-b)^3 & \pdftooltip{\color{red}0}{By Lemma~\ref{53part3}} \\
 	
 	(3 4 5 1 6),(6 1 2) & -2 (a-b)^3 & \pdftooltip{\color{red}0}{By Lemma~\ref{53part3}} & \pdftooltip{\color{green}0}{By Lemma~\ref{53part4}} & -2 (a-b)^3 \\
 	
 	(4 5 1 2 6),(6 2 3) & \phantom{-}2 (a-b)^3 & \pdftooltip{0}{By Lemma~\ref{53part1}} & -2 (a-b)^3 & \phantom{-}2 (a-b)^3 \\
 	
 	(5 1 2 3 6),(6 3 4) & \pdftooltip{0}{By Lemma~\ref{53part1}} & \pdftooltip{0}{By Lemma~\ref{53part1}} & \pdftooltip{0}{By Lemma~\ref{53part1}} & \pdftooltip{\color{red}0}{By Lemma~\ref{53part3}} \\
 	\hline
 	\end{array}}
 $$
 \end{case}
 \begin{case}[{\boldmath $g=(12)(34)$}]  Note that $Z(g)=H\times \Sym_{\{5,\ldots,n\}}$, where $H$ is the
 dihedral group
 $$
 H=\langle (1324),(12)\rangle=\{1,(1324),(12)(34),(4231),(12),(13)(24),(34),(14)(23)\}.
 $$
 The factorizations of $g$ as a product of a $5$-cycle and a $3$-cycle are in a single $Z(g)$-orbit, which we partition into $H$-orbits
 $$
 \actby{Z(g)}((12345),(542))=\bigcup_{r\geq5}\actby{H}((1234r),(r42)).
 $$
 To show $\phi_g \equiv 0$, it suffices to verify 
 $$
 \sum_{(x,y)\in\actby{H}((12345),(542))}\phi_{x,y}(e_i,e_j,e_k)=0
 $$
 for $H$-orbit representatives $\{e_i,e_j,e_k\}$ of the basis triples.
 Since $V^x\cap V^y\subseteq \ker\phi_{x,y}$, we only consider
 $\{e_i,e_j,e_k\}\subseteq\{e_l\mid 1\leq l\leq 5\}$.  
 The remaining triples partition into $H$-orbits
 $$
 \actby{H}\{e_1,e_2,e_3\},\quad
 \actby{H}\{e_1,e_2,e_5\},\quad\text{and}\quad
 \actby{H}\{e_1,e_3,e_5\}.
 $$
 (The sum of the orbit sizes is $4+2+4=\binom{5}{3}$.)
 We simplify each term $\phi_{x,y}(e_i,e_j,e_k)$ of the sum
 and record the results in a table with rows indexed by the elements of $\actby{H}((12345),(542))$.
 Notice that $\phi_{(2 1 4 3 5),(5 3 1)}(e_1,e_3,e_5)=\phi_{(4 3 2 1 5),(5 1 3)}(e_1,e_3,e_5)=0$ follows from Lemma~\ref{53part4}, all other zero entries result from Lemma~\ref{53part1}, and all nonzero entries are found using Lemma~\ref{53part3}.
 Each column sum is zero.
 $$
 {\renewcommand{\arraystretch}{1.5}
 	\begin{array}{cccc}
 	\hline
 	x,y & \phi_{x,y}(e_1,e_2,e_3) & \phi_{x,y}(e_1,e_2,e_5) & \phi_{x,y}(e_1,e_3,e_5) \\
 	\hline
 	(1 2 3 4 5),(5 4 2) & \pdftooltip{0}{By Lemma~\ref{53part1}} & -4 (a-b)^3 & \pdftooltip{0}{By Lemma~\ref{53part1}} \\
 	
 	(1 2 4 3 5),(5 3 2) & \phantom{-}4 (a-b)^3 & -4 (a-b)^3 & \phantom{-}4 (a-b)^3 \\
 	
 	(2 1 3 4 5),(5 4 1) & \pdftooltip{0}{By Lemma~\ref{53part1}} & \phantom{-}4 (a-b)^3 & -4 (a-b)^3 \\
 	
 	(2 1 4 3 5),(5 3 1) & -4 (a-b)^3 & \phantom{-}4 (a-b)^3 & \pdftooltip{\color{green}0}{By Lemma~\ref{53part4}} \\
 	
 	(3 4 1 2 5),(5 2 4) & \pdftooltip{0}{By Lemma~\ref{53part1}} & -4 (a-b)^3 & \pdftooltip{0}{By Lemma~\ref{53part1}} \\
 	
 	(3 4 2 1 5),(5 1 4) & \pdftooltip{0}{By Lemma~\ref{53part1}} & \phantom{-}4 (a-b)^3 & -4 (a-b)^3 \\
 	
 	(4 3 1 2 5),(5 2 3) & \phantom{-}4 (a-b)^3 & -4 (a-b)^3 & \phantom{-}4 (a-b)^3 \\
 	
 	(4 3 2 1 5),(5 1 3) & -4 (a-b)^3 & \phantom{-}4 (a-b)^3 & \pdftooltip{\color{green}0}{By Lemma~\ref{53part4}} \\
 	\hline
 	\end{array}}
 $$
 \end{case}
 \begin{case}[{\boldmath $g=(123)$}]  Note that $Z(g)=\langle (123)\rangle\times \Sym_{\{4,\ldots,n\}}$.
 The factorizations of $g$ as a product of a $5$-cycle and a $3$-cycle are in a single $Z(g)$-orbit
 $$
 \actby{Z(g)}((12345),(543))=\{((123rs),(sr3))\mid \{r,s\}\subseteq\{4,\ldots,n\}\}.
 $$
 Consider the subgroup $H=\langle (123), (45)\rangle$ of $Z(g)$.
 To show $\phi_g \equiv 0$, it suffices to verify 
 $$
 \sum_{(x,y)\in\actby{H}((12345),(543))}\phi_{x,y}(e_i,e_j,e_k)=0
 $$
 for $H$-orbit representatives $\{e_i,e_j,e_k\}$ of the basis triples.
 Since $V^x\cap V^y\subseteq \ker\phi_{x,y}$, we only consider
 $\{e_i,e_j,e_k\}\subseteq\{e_l\mid 1\leq l\leq 5\}$.  
 The remaining triples partition into $H$-orbits
 $$
 \actby{H}\{e_1,e_2,e_3\},\quad
 \actby{H}\{e_1,e_2,e_4\},\quad\text{and}\quad
 \actby{H}\{e_1,e_4,e_5\}.
 $$
 (The sum of the orbit sizes is $1+6+3=\binom{5}{3}$.)
 We simplify each term $\phi_{x,y}(e_i,e_j,e_k)$ of the sum
 and record the results in a table with rows indexed by the elements of $\actby{H}((12345),(543))$.
 Notice that $\phi_{(2 3 1 5 4),(4 5 1)}(e_1,e_4,e_5)=\phi_{(2 3 1 4 5),(5 4 1)}(e_1,e_4,e_5)=0$ follows from Lemma~\ref{53part4}, all other zero entries result from Lemma~\ref{53part1}, and all nonzero entries are found using Lemma~\ref{53part3}.
 Each column sum is zero.
 $$
 {\renewcommand{\arraystretch}{1.5}
 	\begin{array}{cccc}
 	\hline
 	x,y & \phi_{x,y}(e_1,e_2,e_3) & \phi_{x,y}(e_1,e_2,e_4) & \phi_{x,y}(e_1,e_4,e_5) \\
 	\hline
 	(1 2 3 5 4),(4 5 3) & \pdftooltip{0}{By Lemma~\ref{53part1}} & \pdftooltip{0}{By Lemma~\ref{53part1}} & \phantom{-}2 (a-b)^3 \\
 	
 	(1 2 3 4 5),(5 4 3) & \pdftooltip{0}{By Lemma~\ref{53part1}} & \pdftooltip{0}{By Lemma~\ref{53part1}} & -2 (a-b)^3 \\
 	
 	(2 3 1 5 4),(4 5 1) & \pdftooltip{0}{By Lemma~\ref{53part1}} & -2 (a-b)^3 & \pdftooltip{\color{green}0}{By Lemma~\ref{53part4}} \\
 	
 	(2 3 1 4 5),(5 4 1) & \pdftooltip{0}{By Lemma~\ref{53part1}} & \phantom{-}2 (a-b)^3 & \pdftooltip{\color{green}0}{By Lemma~\ref{53part4}} \\
 	
 	(3 1 2 5 4),(4 5 2) & \pdftooltip{0}{By Lemma~\ref{53part1}} & -2 (a-b)^3 & -2 (a-b)^3 \\
 	
 	(3 1 2 4 5),(5 4 2) & \pdftooltip{0}{By Lemma~\ref{53part1}} & \phantom{-}2 (a-b)^3 & \phantom{-}2 (a-b)^3 \\
 	\hline
 	\end{array}}
 $$
 \end{case}
\end{proof}


\bibliographystyle{alpha}
\bibliography{drinfeldorbifoldref}

\end{document}